\newtheorem*{theorem}{Theorem}
\newtheorem{thm}{Theorem}
\newtheorem{lemma}[thm]{Lemma}
\newtheorem*{corollary}{Corollary}
\newtheorem{proposition}[thm]{Proposition}
\newcommand{\argmax}{\operatorname*{argmax}}
\begin{document}

\title[]{Triangles capturing many lattice points}
\keywords{Lattice point counting, shape optimization, limit shapes.}
\subjclass[2010]{11P21, 52B05, 52C05 (primary) and 11H06, 90C27 (secondary)}

\author[]{Nicholas F. Marshall}\address{Program in Applied Mathematics, Yale University, New Haven, CT 06511, USA}
\email{nicholas.marshall@yale.edu}

\author[]{Stefan Steinerberger}
\address{Department of Mathematics, Yale University, New Haven, CT 06511, USA}
\email{stefan.steinerberger@yale.edu}

\begin{abstract} 
We study a combinatorial problem that recently arose in the context of shape
optimization: among all triangles with vertices $(0,0)$, $(x,0)$, and $(0,y)$
and fixed area, which one encloses the most lattice points from
$\mathbb{Z}_{>0}^2$?  Moreover, does its shape necessarily converge to the
isosceles triangle $(x=y)$ as the area becomes large? Laugesen and Liu suggested
that, in contrast to similar problems, there might not be a limiting shape.  We
prove that the limiting set is indeed nontrivial and contains infinitely many
elements. We also show that there exist `bad' areas where no triangle is
particularly good at capturing lattice points and show that there exists an
infinite set of slopes $y/x$ such that any associated triangle captures more
lattice points than any other fixed triangle for infinitely many (and
arbitrarily large) areas; this set of slopes is a fractal subset of $[1/3, 3]$
and has Minkowski dimension at most $3/4$. 
\end{abstract}

\maketitle

\section{Introduction}
\subsection{Introduction} 
In 2012, Antunes \& Freitas \cite{AntunesFreitas2012} proved that among all
axes-parallel ellipses that are centered at the origin and of a fixed area,
the ellipse enclosing the most lattice points from $\mathbb{Z}_{>0}^2$ converges
to the circle as the area becomes large. This problem originally arose in the
study of variational aspects of spectral geometry, more specifically in the
context of minimizing large eigenvalues of the Laplace operator on rectangles.

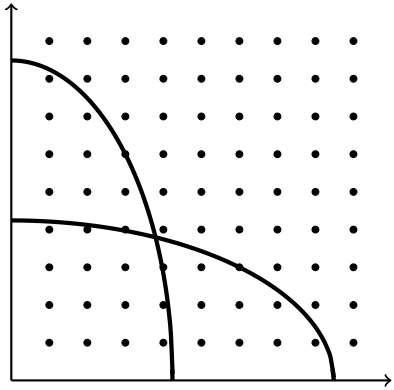
\begin{figure}[h!]
\begin{tikzpicture}[scale=0.5]
\draw [thick, ->]  (0,0) -- (10,0);
\draw [thick, ->]  (0,0) -- (0,10);
\draw[ultra thick,domain=0:4.24265,samples=100,smooth,variable=\x] 
plot ({\x},{sqrt(2)*sqrt(36 - 2*\x*\x))});
\draw [ultra thick] (4.242,0) -- (4.23,0.3);
\draw[ultra thick,domain=0:8.485,samples=100,smooth,variable=\x] 
plot ({\x},{sqrt(2)*sqrt(9 - \x*\x/8))});
\draw [ultra thick] (8.485,0) -- (8.45,0.3);
\filldraw (1,1) circle (0.09cm);
\filldraw (1,2) circle (0.09cm);
\filldraw (1,3) circle (0.09cm);
\filldraw (1,4) circle (0.09cm);
\filldraw (1,5) circle (0.09cm);
\filldraw (1,6) circle (0.09cm);
\filldraw (1,7) circle (0.09cm);
\filldraw (1,8) circle (0.09cm);
\filldraw (1,9) circle (0.09cm);
\filldraw (2,1) circle (0.09cm);
\filldraw (2,2) circle (0.09cm);
\filldraw (2,3) circle (0.09cm);
\filldraw (2,4) circle (0.09cm);
\filldraw (2,5) circle (0.09cm);
\filldraw (2,6) circle (0.09cm);
\filldraw (2,7) circle (0.09cm);
\filldraw (2,8) circle (0.09cm);
\filldraw (2,9) circle (0.09cm);
\filldraw (3,1) circle (0.09cm);
\filldraw (3,2) circle (0.09cm);
\filldraw (3,3) circle (0.09cm);
\filldraw (3,4) circle (0.09cm);
\filldraw (3,5) circle (0.09cm);
\filldraw (3,6) circle (0.09cm);
\filldraw (3,7) circle (0.09cm);
\filldraw (3,8) circle (0.09cm);
\filldraw (3,9) circle (0.09cm);
\filldraw (4,1) circle (0.09cm);
\filldraw (4,2) circle (0.09cm);
\filldraw (4,3) circle (0.09cm);
\filldraw (4,4) circle (0.09cm);
\filldraw (4,5) circle (0.09cm);
\filldraw (4,6) circle (0.09cm);
\filldraw (4,7) circle (0.09cm);
\filldraw (4,8) circle (0.09cm);
\filldraw (4,9) circle (0.09cm);
\filldraw (5,1) circle (0.09cm);
\filldraw (5,2) circle (0.09cm);
\filldraw (5,3) circle (0.09cm);
\filldraw (5,4) circle (0.09cm);
\filldraw (5,5) circle (0.09cm);
\filldraw (5,6) circle (0.09cm);
\filldraw (5,7) circle (0.09cm);
\filldraw (5,8) circle (0.09cm);
\filldraw (5,9) circle (0.09cm);
\filldraw (6,1) circle (0.09cm);
\filldraw (6,2) circle (0.09cm);
\filldraw (6,3) circle (0.09cm);
\filldraw (6,4) circle (0.09cm);
\filldraw (6,5) circle (0.09cm);
\filldraw (6,6) circle (0.09cm);
\filldraw (6,7) circle (0.09cm);
\filldraw (6,8) circle (0.09cm);
\filldraw (6,9) circle (0.09cm);
\filldraw (7,1) circle (0.09cm);
\filldraw (7,2) circle (0.09cm);
\filldraw (7,3) circle (0.09cm);
\filldraw (7,4) circle (0.09cm);
\filldraw (7,5) circle (0.09cm);
\filldraw (7,6) circle (0.09cm);
\filldraw (7,7) circle (0.09cm);
\filldraw (7,8) circle (0.09cm);
\filldraw (7,9) circle (0.09cm);
\filldraw (8,1) circle (0.09cm);
\filldraw (8,2) circle (0.09cm);
\filldraw (8,3) circle (0.09cm);
\filldraw (8,4) circle (0.09cm);
\filldraw (8,5) circle (0.09cm);
\filldraw (8,6) circle (0.09cm);
\filldraw (8,7) circle (0.09cm);
\filldraw (8,8) circle (0.09cm);
\filldraw (8,9) circle (0.09cm);
\filldraw (9,1) circle (0.09cm);
\filldraw (9,2) circle (0.09cm);
\filldraw (9,3) circle (0.09cm);
\filldraw (9,4) circle (0.09cm);
\filldraw (9,5) circle (0.09cm);
\filldraw (9,6) circle (0.09cm);
\filldraw (9,7) circle (0.09cm);
\filldraw (9,8) circle (0.09cm);
\filldraw (9,9) circle (0.09cm);
\end{tikzpicture}
\caption{Among all ellipses with the same area, which captures the most lattice points?}
\end{figure}

Formally, let $R_a$ denote a $1/a \times a$ rectangle, and $\lambda_{1,a}
\leq \lambda_{2,a} \leq \dots$ denote the eigenvalues of the Dirichlet-Laplacian
$-\Delta$ on $R_a$. The explicit form of the eigenvalues allows us to
compute the number of eigenvalues below a certain threshold as the
number of lattice points inside an ellipse: 
$$
\# \left\{k \in \mathbb{Z}_{>0}: \lambda_{k,a} \leq \pi^2 r^2 \right\} = \#
\left\{ (m,n) \in \mathbb{Z}_{>0}^2: (am)^2 + (n/a)^2 \leq r^2 \right\}.
$$

A natural question is now the following: among all rectangles with area 1, which
minimizes the $k$-th eigenvalue of the Dirichlet-Laplacian? If we denote a
sequence of minimizers by $(R_{a_k})$, then the behavior of $(a_k)$ is
rather complicated and not well understood. However, Antunes \& Freitas managed to determine the asymptotic behavior.
\begin{theorem}[Antunes \& Freitas, \cite{AntunesFreitas2012}] We have
$$ \lim_{k \rightarrow \infty}{a_k} = 1.$$
\end{theorem}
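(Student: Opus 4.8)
The plan is to translate the spectral optimization into a lattice-point maximization via the identity from the excerpt, and then to read off the shape of the optimizer from a two-term asymptotic expansion of the counting function. Write
$$N(a,r) = \#\{(m,n)\in\mathbb{Z}_{>0}^2 : (am)^2 + (n/a)^2 \le r^2\},$$
so that $N(a,r)$ counts the eigenvalues of $R_a$ below $\pi^2 r^2$, and $\lambda_{k,a} = \pi^2 r_k(a)^2$ with $r_k(a) = \inf\{r : N(a,r)\ge k\}$. Minimizing $\lambda_{k,a}$ over $a$ is then the same as finding the aspect ratio whose quarter-ellipse captures $k$ lattice points at the smallest radius, and since $N(a,r)\approx \pi r^2/4$ the relevant radius is $r_k \asymp \sqrt{k}\to\infty$. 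Thus it suffices to understand, for large $r$, which $a$ maximizes $N(a,r)$.

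First I would establish the expansion
$$N(a,r) = \frac{\pi r^2}{4} - \frac{r}{2}\Big(a + \frac1a\Big) + O_C\!\big(r^{2/3}\big) \qquad (a\in[1/C,C]),$$
uniformly in $a$ on each compact range. This comes from symmetrizing: reflecting across the axes expresses the count of $\mathbb{Z}^2$ inside the ellipse $(ax)^2+(y/a)^2\le r^2$ as $1 + 2\lfloor r/a\rfloor + 2\lfloor ra\rfloor + 4N(a,r)$; the full ellipse count is $\pi r^2$ plus a lattice discrepancy, while the axis terms contribute $\lfloor r/a\rfloor + \lfloor ra\rfloor = r(a+1/a) + O(1)$. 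The decisive point is that for $a\in[1/C,C]$ the ellipse has curvature bounded below by a constant depending only on $C$, so the classical van der Corput / Hlawka estimate bounds the discrepancy by $O_C(r^{2/3}) = o(r)$ uniformly in $a$.

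The main term $-\tfrac{r}{2}(a+1/a)$ is minimized exactly at $a=1$ and is strictly convex in $\log a$, which is the source of the limiting shape. Concretely, for $|a-1|\ge\epsilon$ one has $a+1/a - 2 \ge c_\epsilon>0$, so comparing at a common radius gives
$$N(1,r) - N(a,r) = \frac{r}{2}\big((a+\tfrac1a) - 2\big) + O_C(r^{2/3}) \ge \frac{c_\epsilon}{2}\,r - C r^{2/3} \longrightarrow +\infty.$$
Hence, for all large $r$, the square $a=1$ captures strictly more lattice points than any $a$ with $|a-1|\ge\epsilon$; feeding this into the definition of $r_k(\cdot)$ shows $r_k(1) < r_k(a)$, i.e.\ $\lambda_{k,1} < \lambda_{k,a}$, so the minimizer $a_k$ must satisfy $|a_k-1|<\epsilon$ once $k$ is large. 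Letting $\epsilon\downarrow 0$ yields $a_k\to 1$.

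Before this comparison can be run, one must confine the optimizer to a fixed compact range $[1/C_0,C_0]$, since the uniform discrepancy bound degenerates as the ellipse becomes eccentric. I would do this with a crude packing estimate independent of the fine lattice-point theory: associating to each counted point $(m,n)$ the unit square $[m-1,m]\times[n-1,n]$, which lies inside the quarter-ellipse by convexity, and observing that a collar of area comparable to the arc length $\asymp r(a+1/a)$ is left uncovered, one obtains $N(a,r)\le \pi r^2/4 - c_0\, r(a+1/a)$ for a universal $c_0>0$; comparing with $N(1,r)\ge \pi r^2/4 - r - Cr^{2/3}$ forces $a+1/a$ to be bounded, hence $a_k\in[1/C_0,C_0]$. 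I expect this interplay to be the main obstacle: one needs compactness in order to invoke a uniform $o(r)$ discrepancy bound, yet must extract that compactness beforehand from soft geometry, and the entire scheme hinges on the subdominant term being genuinely smaller than the order-$r$ signal $\tfrac r2(a+1/a-2)$ — which is precisely why the elementary $O(r^{2/3})$ discrepancy estimate, rather than anything deep about the circle problem, is all that is required.
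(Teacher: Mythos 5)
This theorem is quoted from Antunes \& Freitas \cite{AntunesFreitas2012}; the present paper offers no proof of it, so there is nothing internal to compare against. Your sketch --- symmetrizing to reduce to the full-plane ellipse count, extracting the two-term expansion $N(a,r)=\pi r^2/4-\tfrac{r}{2}\left(a+\tfrac1a\right)+O_C(r^{2/3})$ uniformly on compacta from the curvature-based discrepancy bound, confining the optimizer to a compact range by the elementary packing estimate, and concluding from the strict minimality of $a+1/a$ at $a=1$ --- is essentially the strategy of the original paper and is sound.
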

This means, geometrically, that the ellipse which encloses the most positive integer lattice points tends towards a circle
as the area tends towards infinity.
This result was quite influential and has inspired several other works in high
frequency shape optimization \cite{AntunesFreitas2016, Berger2015, BucerFreitas2013, vandenBergBucurGittins2016, vandenBergGittins2016} and new lattice point theorems \cite{AriturkLaugesen2017, LaugesenLiu2016}. We especially
emphasize the recent results of Laugesen \& Liu \cite{LaugesenLiu2016} and Ariturk \& Laugesen \cite{AriturkLaugesen2017}, which
motivate this paper. Laugesen \& Liu extended the result of Antunes \& Freitas
to a large family of concave curves (which includes the $\ell^p$-unit balls for $1 < p
< \infty$). Ariturk \& Laugesen \cite{AriturkLaugesen2017} established a similar
result for a large class of decreasing convex curves (which includes the
$\ell^p$-unit balls
for $0 < p < 1$).

\subsection{Triangles capturing points.} The approaches in
\cite{AriturkLaugesen2017,LaugesenLiu2016} do not cover the $p=1$ case
(corresponding to triangles) and the case was left as an open problem:

\begin{quote}
The case $p=1$ remains open, where the question is: which right triangles in the
first quadrant with two sides along the axes will enclose the most lattice
points, as the area tends to infinity?  [...] Our numerical evidence suggests
that the right triangle enclosing the most lattice points in the open first
quadrant (and with right angle at the origin) does not approach a $45-45-90$
degree triangle as $r \rightarrow \infty$. Instead one seems to get an infinite
limit set of optimal triangles. (from \cite{LaugesenLiu2016})
\end{quote}
The purpose of our paper is to prove these statements.
\begin{figure}[h!]
\begin{tabular}{cc}
\begin{tikzpicture}[scale=0.4]
\draw [thick, ->]  (0,0) -- (10,0);
\draw [thick, ->]  (0,0) -- (0,10);
\draw[ultra thick, domain=0:7.13,samples=100,smooth,variable=\x] 
plot ({\x},{7.13-\x)});
\filldraw (1,1) circle (0.09cm);
\filldraw (1,2) circle (0.09cm);
\filldraw (1,3) circle (0.09cm);
\filldraw (1,4) circle (0.09cm);
\filldraw (1,5) circle (0.09cm);
\filldraw (1,6) circle (0.09cm);
\filldraw (1,7) circle (0.09cm);
\filldraw (1,8) circle (0.09cm);
\filldraw (1,9) circle (0.09cm);
\filldraw (2,1) circle (0.09cm);
\filldraw (2,2) circle (0.09cm);
\filldraw (2,3) circle (0.09cm);
\filldraw (2,4) circle (0.09cm);
\filldraw (2,5) circle (0.09cm);
\filldraw (2,6) circle (0.09cm);
\filldraw (2,7) circle (0.09cm);
\filldraw (2,8) circle (0.09cm);
\filldraw (2,9) circle (0.09cm);
\filldraw (3,1) circle (0.09cm);
\filldraw (3,2) circle (0.09cm);
\filldraw (3,3) circle (0.09cm);
\filldraw (3,4) circle (0.09cm);
\filldraw (3,5) circle (0.09cm);
\filldraw (3,6) circle (0.09cm);
\filldraw (3,7) circle (0.09cm);
\filldraw (3,8) circle (0.09cm);
\filldraw (3,9) circle (0.09cm);
\filldraw (4,1) circle (0.09cm);
\filldraw (4,2) circle (0.09cm);
\filldraw (4,3) circle (0.09cm);
\filldraw (4,4) circle (0.09cm);
\filldraw (4,5) circle (0.09cm);
\filldraw (4,6) circle (0.09cm);
\filldraw (4,7) circle (0.09cm);
\filldraw (4,8) circle (0.09cm);
\filldraw (4,9) circle (0.09cm);
\filldraw (5,1) circle (0.09cm);
\filldraw (5,2) circle (0.09cm);
\filldraw (5,3) circle (0.09cm);
\filldraw (5,4) circle (0.09cm);
\filldraw (5,5) circle (0.09cm);
\filldraw (5,6) circle (0.09cm);
\filldraw (5,7) circle (0.09cm);
\filldraw (5,8) circle (0.09cm);
\filldraw (5,9) circle (0.09cm);
\filldraw (6,1) circle (0.09cm);
\filldraw (6,2) circle (0.09cm);
\filldraw (6,3) circle (0.09cm);
\filldraw (6,4) circle (0.09cm);
\filldraw (6,5) circle (0.09cm);
\filldraw (6,6) circle (0.09cm);
\filldraw (6,7) circle (0.09cm);
\filldraw (6,8) circle (0.09cm);
\filldraw (6,9) circle (0.09cm);
\filldraw (7,1) circle (0.09cm);
\filldraw (7,2) circle (0.09cm);
\filldraw (7,3) circle (0.09cm);
\filldraw (7,4) circle (0.09cm);
\filldraw (7,5) circle (0.09cm);
\filldraw (7,6) circle (0.09cm);
\filldraw (7,7) circle (0.09cm);
\filldraw (7,8) circle (0.09cm);
\filldraw (7,9) circle (0.09cm);
\filldraw (8,1) circle (0.09cm);
\filldraw (8,2) circle (0.09cm);
\filldraw (8,3) circle (0.09cm);
\filldraw (8,4) circle (0.09cm);
\filldraw (8,5) circle (0.09cm);
\filldraw (8,6) circle (0.09cm);
\filldraw (8,7) circle (0.09cm);
\filldraw (8,8) circle (0.09cm);
\filldraw (8,9) circle (0.09cm);
\filldraw (9,1) circle (0.09cm);
\filldraw (9,2) circle (0.09cm);
\filldraw (9,3) circle (0.09cm);
\filldraw (9,4) circle (0.09cm);
\filldraw (9,5) circle (0.09cm);
\filldraw (9,6) circle (0.09cm);
\filldraw (9,7) circle (0.09cm);
\filldraw (9,8) circle (0.09cm);
\filldraw (9,9) circle (0.09cm);
\end{tikzpicture}
&
\begin{tikzpicture}[scale=0.4]
\draw [thick, ->]  (0,0) -- (10,0);
\draw [thick, ->]  (0,0) -- (0,10);
\draw[ultra thick, domain=0:6,samples=100,smooth,variable=\x] 
plot ({\x},{sqrt(2)*6-sqrt(2)*\x)});
\filldraw (1,1) circle (0.09cm);
\filldraw (1,2) circle (0.09cm);
\filldraw (1,3) circle (0.09cm);
\filldraw (1,4) circle (0.09cm);
\filldraw (1,5) circle (0.09cm);
\filldraw (1,6) circle (0.09cm);
\filldraw (1,7) circle (0.09cm);
\filldraw (1,8) circle (0.09cm);
\filldraw (1,9) circle (0.09cm);
\filldraw (2,1) circle (0.09cm);
\filldraw (2,2) circle (0.09cm);
\filldraw (2,3) circle (0.09cm);
\filldraw (2,4) circle (0.09cm);
\filldraw (2,5) circle (0.09cm);
\filldraw (2,6) circle (0.09cm);
\filldraw (2,7) circle (0.09cm);
\filldraw (2,8) circle (0.09cm);
\filldraw (2,9) circle (0.09cm);
\filldraw (3,1) circle (0.09cm);
\filldraw (3,2) circle (0.09cm);
\filldraw (3,3) circle (0.09cm);
\filldraw (3,4) circle (0.09cm);
\filldraw (3,5) circle (0.09cm);
\filldraw (3,6) circle (0.09cm);
\filldraw (3,7) circle (0.09cm);
\filldraw (3,8) circle (0.09cm);
\filldraw (3,9) circle (0.09cm);
\filldraw (4,1) circle (0.09cm);
\filldraw (4,2) circle (0.09cm);
\filldraw (4,3) circle (0.09cm);
\filldraw (4,4) circle (0.09cm);
\filldraw (4,5) circle (0.09cm);
\filldraw (4,6) circle (0.09cm);
\filldraw (4,7) circle (0.09cm);
\filldraw (4,8) circle (0.09cm);
\filldraw (4,9) circle (0.09cm);
\filldraw (5,1) circle (0.09cm);
\filldraw (5,2) circle (0.09cm);
\filldraw (5,3) circle (0.09cm);
\filldraw (5,4) circle (0.09cm);
\filldraw (5,5) circle (0.09cm);
\filldraw (5,6) circle (0.09cm);
\filldraw (5,7) circle (0.09cm);
\filldraw (5,8) circle (0.09cm);
\filldraw (5,9) circle (0.09cm);
\filldraw (6,1) circle (0.09cm);
\filldraw (6,2) circle (0.09cm);
\filldraw (6,3) circle (0.09cm);
\filldraw (6,4) circle (0.09cm);
\filldraw (6,5) circle (0.09cm);
\filldraw (6,6) circle (0.09cm);
\filldraw (6,7) circle (0.09cm);
\filldraw (6,8) circle (0.09cm);
\filldraw (6,9) circle (0.09cm);
\filldraw (7,1) circle (0.09cm);
\filldraw (7,2) circle (0.09cm);
\filldraw (7,3) circle (0.09cm);
\filldraw (7,4) circle (0.09cm);
\filldraw (7,5) circle (0.09cm);
\filldraw (7,6) circle (0.09cm);
\filldraw (7,7) circle (0.09cm);
\filldraw (7,8) circle (0.09cm);
\filldraw (7,9) circle (0.09cm);
\filldraw (8,1) circle (0.09cm);
\filldraw (8,2) circle (0.09cm);
\filldraw (8,3) circle (0.09cm);
\filldraw (8,4) circle (0.09cm);
\filldraw (8,5) circle (0.09cm);
\filldraw (8,6) circle (0.09cm);
\filldraw (8,7) circle (0.09cm);
\filldraw (8,8) circle (0.09cm);
\filldraw (8,9) circle (0.09cm);
\filldraw (9,1) circle (0.09cm);
\filldraw (9,2) circle (0.09cm);
\filldraw (9,3) circle (0.09cm);
\filldraw (9,4) circle (0.09cm);
\filldraw (9,5) circle (0.09cm);
\filldraw (9,6) circle (0.09cm);
\filldraw (9,7) circle (0.09cm);
\filldraw (9,8) circle (0.09cm);
\filldraw (9,9) circle (0.09cm);
\end{tikzpicture}

\end{tabular}
\caption{Among triangles with fixed area, which contains the most lattice points?}
\end{figure}
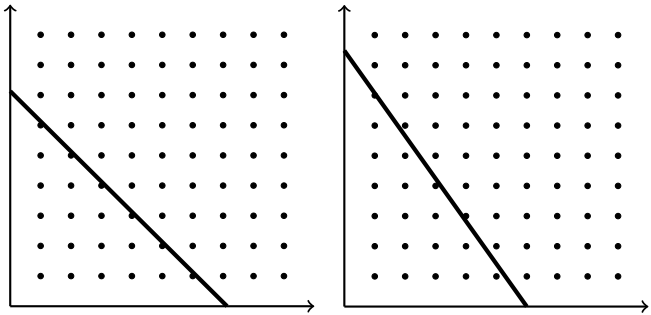

A standard lattice point counting method is to use the Poisson summation formula
to express the number of lattice points in a domain as the area of the domain
plus an error term related to the decay of the Fourier transform of the
indicator function of the domain, which in turn is related to the curvature of
the boundary of the domain. It is therefore not surprising that a lack of
curvature would yield different behavior and require different techniques. Here,
both number theory and dynamical systems start to play a role. The purpose of
this paper is to establish several basic results, and to prove that there is
indeed no limiting shape. In fact, we prove there is an infinite number of
triangles each of which is optimal for an infinite number of arbitrarily large
areas. We emphasize that these questions seem to be remarkably rich, and many
open problems remain (some of which are discussed in \S \ref{open}).

\section{Main Results}
\subsection{Two problems.} There are two ways to approach the problem: pick
a large area and ask which triangle maximizes the number of enclosed lattice
points, or fix a specific triangle and try to understand the number of
lattice points it contains as it is dilated. The question described in the
introduction asks the first question and, a priori, the second problem is strictly
simpler. However, it turns out that it
is possible to obtain sufficiently good control on the error estimates to pursue
the second approach and obtain results for the original question. We start
by presenting some results for the simpler question and then explain how
the techniques can be adapted to deal with the harder problem.

\subsection{Dilating fixed triangles}
Let $N_\beta(\alpha)$ denote the number of positive lattice points contained in
the triangle with vertices $(0,0), (\alpha/\sqrt{\beta},0),$ and
$(0,\alpha \sqrt{\beta})$, i.e.,
\[ 
N_\beta(\alpha) = \# \left\{(k,m) \in \mathbb{Z}_{>0}^2 : m \le \alpha
\sqrt{\beta}  - k \beta \right\},
\]
where $\mathbb{Z}_{>0}$ is the set of positive integers. The slope of the
hypotenuse of this triangle is $-\beta$ so we refer to $\beta$ as the slope
parameter or simply the `slope' of the triangle.  Similarly, since $\alpha^2/2$
is the area of this triangle we refer to $\alpha$
as the area parameter. The subtleties of estimating $N_\beta(\alpha)$ occur near
the boundary of the given triangle. An estimate based on the length of the
boundary shows that 
$$ 
N_\beta(\alpha) = \frac{\alpha^2}{2} + \mathcal{O}_{\beta}(\alpha), \quad
\text{as} \quad \alpha \rightarrow \infty,
$$ 
where the implicit constant in the error term depends on $\beta$. This suggests
$$
\frac{N_{\beta}(\alpha) - \alpha^2/2}{\alpha} \qquad \mbox{as a suitable
renormalization},
$$
which isolates the interesting behavior happening at the linear scale.

\begin{thm} \label{thm1} The limit
$$ 
\lim_{\alpha \rightarrow \infty}{  \frac{N_{\beta}(\alpha) - \alpha^2/2}{\alpha}
} \qquad \mbox{exists if and only if}~\beta~\mbox{is irrational},
$$
and if the limit exists, then it is smaller than $-1$. Moreover, we have that
the set
$$ 
\Lambda \overset{\text{def}}{=} \left\{ \beta \in \mathbb{Q}: \limsup_{\alpha
\rightarrow \infty}{  \frac{N_{\beta}(\alpha) - \alpha^2/2}{\alpha}  } > -1\right\}$$
is non-empty, contained in $[1/3, 3]$, has $1$ as a unique accumulation point,
and has Minkowski dimension at most $3/4$. Moreover, for any finite subset
$\Gamma \subset \Lambda$, there exists $\beta \in \Lambda \setminus \Gamma,$
such that
$$ \limsup_{\alpha \rightarrow \infty}{ \left( N_{\beta}(\alpha) -  \max_{\gamma \in \Gamma}N_{\gamma}(\alpha) \right)} > 0.$$
\end{thm}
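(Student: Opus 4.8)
The plan is to reduce everything to an explicit sawtooth formula for the renormalized count and then exploit the fact that \emph{every} slope in $\Lambda$ has a count dipping below the threshold $-1$ infinitely often. First I would record the expansion underlying the earlier parts of the theorem. Writing $\beta=p/q$ in lowest terms and $A=\alpha\sqrt{\beta}$, we have $N_\beta(\alpha)=\sum_{k\ge 1}\lfloor A-\beta k\rfloor_+$, and splitting $\lfloor x\rfloor=x-\{x\}$ the smooth part contributes $\alpha^2/2-\tfrac12\alpha\sqrt{\beta}+O(1)$. Over each block of $q$ consecutive $k$ the residues $\beta k \bmod 1$ run through $\{0,1/q,\dots,(q-1)/q\}$, so each full block contributes exactly $\{q\{A\}\}+(q-1)/2=\{\alpha\sqrt{pq}\}+(q-1)/2$ to $\sum\{A-\beta k\}$, and there are $\alpha/\sqrt{pq}+O(1)$ blocks. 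Collecting terms yields
$$ f_\beta(\alpha):=\frac{N_\beta(\alpha)-\alpha^2/2}{\alpha}=L(\beta)-\frac{\{\alpha\sqrt{pq}\}}{\sqrt{pq}}+O_\beta(\alpha^{-1}),\qquad L(\beta)=-\frac{p+q-1}{2\sqrt{pq}}. $$
In particular $L(\beta)>-1\iff(\sqrt p-\sqrt q)^2<1$, which is exactly the membership condition for $\Lambda$, and $f_\beta(\alpha)>-1$ precisely when $\{\alpha\sqrt{pq}\}<\eta_\beta:=\sqrt{pq}-\tfrac12(p+q-1)$, where one checks $\eta_\beta\in(0,1)$ for every $\beta\in\Lambda$.

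Next comes the reduction. Since $\max_\gamma N_\gamma(\alpha)$ is a finite maximum, it suffices to produce $\beta\in\Lambda\setminus\Gamma$ and arbitrarily large $\alpha$ with $f_\beta(\alpha)>-1>f_\gamma(\alpha)$ for every $\gamma\in\Gamma$; then $N_\beta(\alpha)>N_\gamma(\alpha)$ for all $\gamma$, and in fact $N_\beta(\alpha)-\max_\gamma N_\gamma(\alpha)\ge c\,\alpha\to\infty$. This is the decisive simplification: it avoids comparing the limits $L(\cdot)$ directly, which would fail whenever $\Gamma$ contains the globally optimal slope $\beta=1$, at which $L$ attains its strict maximum $-1/2$. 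By the displayed formula the requirement becomes the phase condition $\{\alpha\sqrt{pq}\}<\eta_\beta$ together with $\{\alpha\sqrt{p_\gamma q_\gamma}\}>\eta_\gamma$ for all $\gamma$, with a fixed margin to absorb the $O(\alpha^{-1})$ errors.

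The choice of $\beta$ is where number theory enters. I would take $\beta_n=(n+1)/n\in\Lambda$, so $p_\beta q_\beta=n(n+1)$ is never a perfect square, and claim that the squarefree part of $n(n+1)$ takes infinitely many values: if it took only finitely many values $d$, then each $n(n+1)=dm^2$ rewrites as the Pell equation $(2n+1)^2-d(2m)^2=1$, with only $O(\log N)$ solutions $n\le N$, contradicting that there are $N$ integers. Choosing $n$ large thus arranges $\beta_n\notin\Gamma$ and $\mathrm{sqf}(n(n+1))$ distinct from every $\mathrm{sqf}(p_\gamma q_\gamma)$. By the classical $\mathbb{Q}$-linear independence of $\{\sqrt d:d\text{ squarefree}\}$, the frequency $\sqrt{p_\beta q_\beta}$ is then linearly independent of $\{\sqrt{p_\gamma q_\gamma}:\gamma\in\Gamma\}$ over $\mathbb{Q}$.

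The equidistribution step is what I expect to be the main obstacle. Consider the flow $\alpha\mapsto\Phi(\alpha)=(\{\alpha\sqrt{p_\beta q_\beta}\},(\{\alpha\sqrt{p_\gamma q_\gamma}\})_\gamma)$. Because the $\beta$-frequency is independent of the span of the $\gamma$-frequencies, every integer relation among the frequencies omits the $\beta$-coordinate, so the orbit closure splits as a product $\mathbb{T}^1\times G$, with $G\subseteq\mathbb{T}^{|\Gamma|}$ the orbit-closure subtorus of the $\gamma$-phases, and $\Phi$ equidistributes with respect to Haar measure there by Weyl. It then remains to check that the target $[0,\eta_\beta)\times\prod_\gamma(\eta_\gamma,1)$ meets $\mathbb{T}^1\times G$ in positive measure. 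The first factor is immediate; for the second, $G$ is a group, so $-G=G$, and $x\mapsto-x$ carries $\prod_\gamma(\eta_\gamma,1)$ onto the corner box $\prod_\gamma(0,1-\eta_\gamma)$. For all small $\alpha>0$ the $\gamma$-coordinates of $\Phi(\alpha)$ lie in this corner box, so $G$ meets $\prod_\gamma(0,1-\eta_\gamma)$, hence also $\prod_\gamma(\eta_\gamma,1)$, in a nonempty open set. Equidistribution then supplies arbitrarily large $\alpha$ with $\Phi(\alpha)$ in the target, completing the proof. The two delicate inputs are exactly the independence (so the torus splits and the $\beta$-phase can be steered to $0$ independently, even when the $\gamma$-phases satisfy relations among themselves) and the verification that the ``all $\gamma$ simultaneously bad'' region is genuinely reachable inside $G$.
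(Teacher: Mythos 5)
Your argument for the final (``nontrivial dynamics'') assertion is correct, and it takes a genuinely different route from the paper's. The paper pushes all the phases $\{\alpha\sqrt{p_\gamma q_\gamma}\}$, $\gamma\in\Gamma$, simultaneously into their deep bad zones near $1$ via a Poincar\'e-recurrence alignment (Lemma \ref{basicalign}), obtaining a uniform $\delta>0$ with $N_\gamma(\alpha)\le\alpha^2/2-(1+\delta)\alpha$ there, and then beats this with the \emph{worst-case}, phase-independent lower bound $N_{(n+1)/n}(\alpha)\ge\alpha^2/2-(1+\tfrac{1}{2n})\alpha+\mathcal{O}_n(1)$ for $1/(2n)<\delta$; no control of the challenger's phase is needed. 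You instead steer the challenger's phase into its good window $[0,\eta_\beta)$ simultaneously with pushing every $\gamma$-phase into $(\eta_\gamma,1)$, which forces you to prove that $\sqrt{p_\beta q_\beta}$ is $\mathbb{Q}$-independent of the $\gamma$-frequencies; your Pell-equation argument that the squarefree part of $n(n+1)$ takes infinitely many values, combined with Besicovitch, does this correctly (the paper gets the same independence in its proof of Theorem \ref{limitset} by taking $n$ prime). Your orbit-closure argument --- splitting off the $\beta$-coordinate as a full circle factor, and using $G=-G$ together with the small-$\alpha$ orbit points to show $G\cap\prod_\gamma(\eta_\gamma,1)$ is a nonempty open subset of $G$ --- cleanly handles possible rational relations among the $\gamma$-frequencies, and is in fact closer in spirit to the machinery the paper builds for Theorem \ref{limitset} (Lemmas \ref{mimic} and \ref{align}) than to its own proof of Theorem \ref{thm1}. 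Your observation that one cannot compare the limits $L(\cdot)$ directly because $\Gamma$ may contain $\beta=1$ is exactly the right point, and your sawtooth derivation of Lemma \ref{lem:rational} via $\sum_{j=0}^{q-1}\{x+j/q\}=\{qx\}+(q-1)/2$ is a correct and more elementary alternative to the paper's convex-hull/Pick's-theorem computation.

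As a proof of the full theorem, however, two components are missing. First, the ``if and only if'' clause: your formula shows the limit fails to exist for rational $\beta$ (the coefficient oscillates with amplitude $1/\sqrt{pq}$), but you never prove that the limit \emph{does} exist and equals $-(\sqrt{\beta}+\sqrt{1/\beta})/2<-1$ for irrational $\beta$. This requires an input beyond the rational computation --- in your framework, Weyl equidistribution of $\{A-\beta k\}$ with an error uniform in the shift $A$ (which itself depends on $\alpha$); in the paper's, Lemmas \ref{lem:irrationalhull} and \ref{lem:irrationalarea}. Second, the Minkowski dimension bound $\dim\Lambda\le 3/4$ is not addressed at all; it does not follow from the characterization $(\sqrt{p}-\sqrt{q})^2<1$ by itself but needs the covering argument (an interval of length $\sim\varepsilon^{1/4}$ around the accumulation point $1$ plus $\mathcal{O}(\varepsilon^{-3/4})$ isolated rationals of denominator $\lesssim\varepsilon^{-1/2}$). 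The remaining clauses --- non-emptiness, $\Lambda\subset[1/3,3]$, and $1$ as the unique accumulation point --- do follow quickly from your characterization of $\Lambda$, but you should spell that out.
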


The result may be summarized as follows: if one is interested in triangles that,
at least for a sequence of areas tending toward infinity, capture a lot of
lattice points relative to other triangles of equal area, then the slope should
not be irrational: for irrational slopes, we have $N_{\beta}(\alpha) <
\alpha^2/2 - \alpha + o_\beta(\alpha)$. On the other hand, there is an infinite
set $\Lambda$ of rational slopes such that for all $\beta \in \Lambda$ we have
$N_\beta(\alpha) \ge \alpha^2/2 - (1 - \delta_\beta) \alpha + o_\beta(\alpha)$
for infinitely many arbitrarily large $\alpha$ (depending on $\beta$), where
$\delta_\beta >0$ is a positive constant depending on $\beta$.  The set
$\Lambda$ (see Figure 3) has a rather nontrivial structure and is fractal
in the sense that its Minkowski dimension is at most $3/4$. 

\begin{figure}[h!]
\begin{tikzpicture}[scale=0.8]
\draw [thick]  (-1,3) -- (1,-3);
\draw [thick]  (-9,3) -- (9,-3);
\draw [dashed, thick]  (-3,3) -- (3,-3);
\draw [thick]  (-1.5,3) -- (1.5,-3);
\draw [thick]  (-6,3) -- (6,-3);
\draw [thick]  (-2,3) -- (2,-3);
\draw [thick]  (-4.5,3) -- (4.5,-3);
\draw [thick]  (-4,3) -- (4,-3);
\draw [thick]  (-4,3) -- (4,-3);
\draw [thick]  (-12/5,3) -- (12/5,-3);
\draw [thick]  (-15/4,3) -- (15/4,-3);
\draw [thick]  (-9/5,3) -- (9/5,-3);
\draw [thick]  (-5,3) -- (5,-3);
\draw [thick]  (-15/6,3) -- (15/6,-3);
\draw [thick]  (-18/5,3) -- (18/5,-3);
\draw [thick]  (-18/7,3) -- (18/7,-3);
\draw [thick]  (-21/6,3) -- (21/6,-3);
\draw [thick]  (-21/8,3) -- (21/8,-3);
\draw [thick]  (-24/7,3) -- (24/7,-3);
\draw [thick]  (-24/9,3) -- (24/9,-3);
\draw [ultra thick]  (-24/7,3) -- (24/7,-3);
\draw [ultra thick]  (-24/9,3) -- (24/9,-3);
\draw [ultra thick]  (-3.4,3) -- (3.4,-3);
\draw [ultra thick]  (-3.3,3) -- (3.3,-3);
\draw [ultra thick]  (-3.2,3) -- (3.2,-3);
\draw [ultra thick]  (-2.6,3) -- (2.6,-3);
\draw [ultra thick]  (-2.7,3) -- (2.7,-3);
\draw [ultra thick]  (-2.8,3) -- (2.8,-3);
\end{tikzpicture}
\caption{Slopes in $\Lambda$ start to cluster (not shown) around the slope $-1$ (dashed).} 
\end{figure}
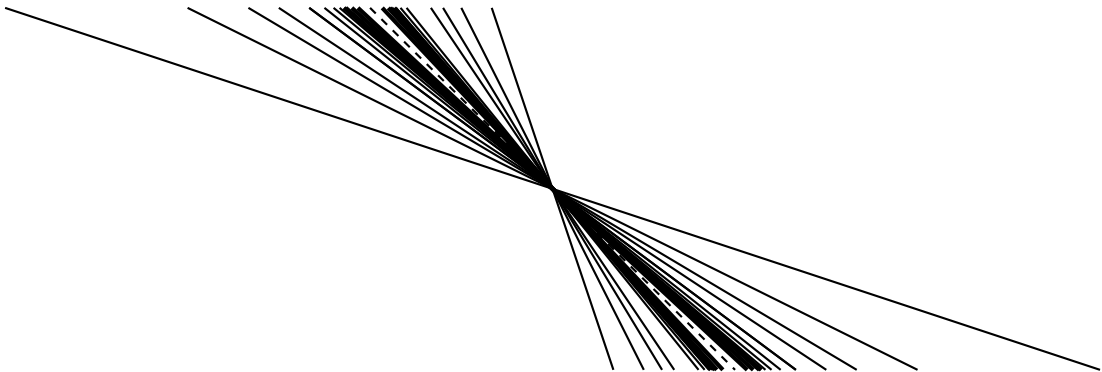

Finally, the dynamics of $\Lambda$ in terms of capturing lattice points are
nontrivial: every finite subset $\Gamma \subset \Lambda$ is at least sometimes
uniformly worse at capturing lattice points than some element $\beta \in \Lambda
\setminus \Gamma$.

\subsection{Slopes that are optimal for arbitrarily large areas}
We define the limit set $S$ as the set of slopes that capture a maximal
number of lattice points for arbitrarily large areas
$$
S = \bigcap_{r >0 } \bigcup_{\alpha > r}{ \argmax_{\beta >0}
N_\beta(\alpha)} .
$$
The next theorem confirms the suspicion of Laugesen \& Liu by establishing that
the limit set is nontrivial. More precisely, we show that the limit set contains
infinitely many elements of $\Lambda$.

\begin{thm} \label{limitset}
There is an infinite subset of $\Lambda$ contained in $S$:
$$
\# \{ p/q \in \Lambda \cap S \} = \infty.
$$
\end{thm}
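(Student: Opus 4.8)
The plan is to reduce the global optimization to a single explicit sawtooth attached to each slope, and then run a competition argument controlled by equidistribution. Since for fixed $\alpha$ maximizing $N_\beta(\alpha)$ is the same as maximizing the renormalization $f_\beta(\alpha):=(N_\beta(\alpha)-\alpha^2/2)/\alpha$, I would first extract from the analysis underlying Theorem~\ref{thm1} the explicit peak value
\[
L(\beta):=\limsup_{\alpha\to\infty}f_\beta(\alpha)=-\frac{p+q-1}{2\sqrt{pq}}\qquad\text{for }\beta=p/q\text{ in lowest terms,}
\]
together with a uniform envelope bound $f_\beta(\alpha)\le L(\beta)+C/\alpha$ with an \emph{absolute} constant $C$ (the excess over the smooth term $-\tfrac{p+q}{2\sqrt{pq}}\alpha$ is at most the number of lattice points on the critical line $pk+qm=\alpha\sqrt{pq}$, which is $O(\alpha/\sqrt{pq})$). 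Two consequences do the heavy lifting: $\Lambda=\{p/q:(\sqrt p-\sqrt q)^2<1\}$, the super-level sets $\{\beta:L(\beta)>v\}$ are finite for every $v>-1$ (the peak values accumulate only at $-1$, with the extremal slopes tending to $1$), and every trough $-\tfrac{p+q+1}{2\sqrt{pq}}$ lies strictly below $-1$.

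The mechanism is visible in the base case $\beta=1$, which has the unique largest peak $L(1)=-1/2$. Near each of its peaks (just above integer $\alpha$) every competing slope satisfies $f_\beta(\alpha)\le -1/\sqrt2+C/\alpha<-1/2$ for $\alpha$ large, so $\beta=1$ is the strict global maximizer along a sequence $\alpha\to\infty$; hence $1\in\Lambda\cap S$.

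Because the peak values accumulate only at $-1$, any infinite subfamily of $S$ must cluster at slope $1$, so I would target the explicit family $\beta_0=(q+1)/q$ with $q(q+1)$ squarefree (which holds for infinitely many $q$). Each such $\beta_0\in\Lambda$ with $L(\beta_0)=-\sqrt{q/(q+1)}\to-1$, its competitor set $H=\{\beta':L(\beta')>L(\beta_0)\}$ is finite, and, crucially, the frequency $\sqrt{p_0q_0}=\sqrt{q(q+1)}$ has a fresh squarefree core, hence is $\mathbb{Q}$-linearly independent of $1$ and of the competitor frequencies $\{\sqrt{p'q'}:\beta'\in H\}$. Since each competitor's trough already lies below $-1<L(\beta_0)$, the goal is to produce arbitrarily large $\alpha$ at which $\beta_0$ sits at its peak while every $\beta'\in H$ is simultaneously driven below $L(\beta_0)$; all remaining lower-peak slopes are then beaten automatically by the uniform envelope. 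Feeding the phases $\{\alpha\sqrt{p'q'}\}$ into Weyl's theorem, and using the independence of $\sqrt{p_0q_0}$ to free $\beta_0$'s own phase, realizes such $\alpha$, placing $\beta_0$ in $\argmax_{\beta}N_\beta(\alpha)$ for these areas, so $\beta_0\in\Lambda\cap S$ and the family is infinite.

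The hard part will be the simultaneous suppression step: exhibiting a single arbitrarily large $\alpha$ at which all finitely many higher-peak competitors drop below $L(\beta_0)$ at once while $\beta_0$ peaks. The uniform envelope dispatches the infinitely many low-peak slopes in one stroke, and the freshness of $\beta_0$'s squarefree core frees its own phase, but the competitor frequencies $\sqrt{p'q'}$ may themselves be $\mathbb{Q}$-linearly dependent (shared squarefree cores), so their phases cannot be prescribed independently and one must verify that the subtorus they sweep out genuinely meets the product of their sub-$L(\beta_0)$ windows. This is precisely the equidistribution phenomenon responsible for the `bad areas', and it—together with checking that the envelope constant can indeed be taken absolute—is where the real work lies.
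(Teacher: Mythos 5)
Your second half --- aligning the phases $\{\alpha\sqrt{p'q'}\}$ of the finitely many high-peak competitors via the $\mathbb{Q}$-linear independence of squarefree radicals (Besicovitch) and Kronecker/Weyl, while the target slope $(q+1)/q$ with fresh squarefree core sits at its peak --- is essentially the paper's argument (its Lemmas \ref{radical} and \ref{align}; the paper uses $(p_r+1)/p_r$ with $p_r$ a new prime, which is the same idea). The worry you raise about shared squarefree cores is real but easily resolved: writing $\sqrt{p'q'}=a\sqrt{b}$ with $b$ squarefree, forcing $\{\alpha\sqrt{b}\}\in(1-\varepsilon/a,1)$ forces $\{\alpha\sqrt{p'q'}\}\in(1-\varepsilon,1)$, so one only needs to control the finitely many distinct cores.

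The genuine gap is the ``uniform envelope'' $f_\beta(\alpha)\le L(\beta)+C/\alpha$ with an \emph{absolute} constant $C$, which is false and is doing all the work of reducing the competition at a fixed area to finitely many slopes. Counterexample: take $\alpha=n$ an integer and $\beta=p/q$ with $p/q$ extremely close to $1$ but $p,q$ enormous (or $\beta$ irrational near $1$). Then $N_\beta(n)$ is within $O(1)$ of $N_1(n)=n^2/2-n/2$, so $f_\beta(n)\approx -1/2$, while $L(\beta)\le -\sqrt{q/p}\cdot(1+o(1))\le -1+o(1)$; no absolute $C$ can bridge $-1+C/n$ and $-1/2$. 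The error term in Lemma \ref{lem:rational} is genuinely $\mathcal{O}_{p,q}(1)$, not $\mathcal{O}(1)$, and your own heuristic (excess $\lesssim$ number of points on the critical line, i.e.\ $O(\alpha/\sqrt{pq})$) only yields an $O(1/\sqrt{pq})$ correction to $f_\beta$, not $O(1/\alpha)$. Consequently your proposal has no mechanism to rule out, at a given large $\alpha$, an irrational slope or a huge-denominator rational lying very close to one of the suppressed competitors: such a $\beta$ has $L(\beta)\le -1$ but at finite $\alpha$ it mimics the nearby competitor, not its own asymptotics. The paper closes exactly this hole with three lemmas you would need substitutes for: Lemma \ref{curve} (a $\gamma$-good slope forces $\gtrsim\gamma\alpha$ lattice points on the curved part of the convex hull boundary), Lemma \ref{closetorational} (hence, by a pigeonhole on the hull's edge slopes, $\beta$ must lie within $O_{\gamma,\eta}(1/\alpha)$ of a rational with denominator $<1/\eta$), and the stability Lemma \ref{mimic} (if that nearby rational has $\{\alpha\sqrt{pq}\}\in(1-\varepsilon,1)$, then \emph{every} slope within $c/\alpha$ of it, rational or not, satisfies $N_\beta(\alpha)\le\alpha^2/2-\alpha+\mathcal{O}_{p,q}(1+\varepsilon\alpha)$). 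Without these, the phrase ``all remaining lower-peak slopes are then beaten automatically'' is unsupported, and that --- not the equidistribution step --- is where the proof actually lives.
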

While we do not have a precise description of the infinite subset of $\Lambda$
which is contained in $S$, the proof of Theorem \ref{limitset} implies that for
every squarefree number $k \in \mathbb{N}$ (i.e. a number that does not contain
any prime factor more than once) there exists an element $p/q \in \Lambda \cap
S$ such that $pq = k s^2$ where $s \in \mathbb{N}$.

\subsection{Bad areas exist} 
In Theorem \ref{thm1} we established that if $\beta$ is irrational, then the
limit
$$ 
\lim_{\alpha \rightarrow \infty}{  \frac{N_{\beta}(\alpha) - \alpha^2/2}{\alpha}
} \quad \text{exists and is strictly less than $-1 $}.
$$
In fact, as we will see in Lemma \ref{lem:irrational}, it is possible to choose
an irrational slope $\beta$ such that the above limit is arbitrarily close to
$-1$. Informally speaking, this means that there exists a fixed triangle with an
irrational slope which captures at least $\alpha^2/2 - (1+\delta) \alpha$
lattice points for all sufficiently large areas, where $\delta >0$ is an
arbitrary fixed constant.  The following Theorem states that there exists `bad'
areas where it is difficult to do much better than this.

\begin{thm}[Bad areas exist] \label{badarea} We have
$$ 
\liminf_{\alpha \rightarrow \infty}{ \sup_{\beta} \frac{N_\beta (\alpha) - \alpha^2/2}{\alpha}} = -1.
$$
\end{thm}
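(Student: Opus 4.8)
The plan is to prove the two inequalities $\liminf\ge-1$ and $\liminf\le-1$ separately; the first is a short consequence of Lemma \ref{lem:irrational}, while the second is the substance of the statement. The common starting point is an exact identity for the renormalized count. Writing $K=\lfloor\alpha/\sqrt\beta\rfloor$ and summing over the vertical slices $k=1,\dots,K$ gives $N_\beta(\alpha)=\sum_{k=1}^K\lfloor\alpha\sqrt\beta-\beta k\rfloor$, and replacing $\lfloor y\rfloor$ by $y-\{y\}$ and evaluating the resulting arithmetic progression yields
$$\frac{N_\beta(\alpha)-\alpha^2/2}{\alpha}=-\frac{\sqrt\beta}{2}-\frac{1}{\alpha}\sum_{k=1}^{K}\{\alpha\sqrt\beta-\beta k\}+O\!\left(\frac{\beta}{\alpha}\right).$$
For the lower bound I would fix $\epsilon>0$ and use Lemma \ref{lem:irrational} to choose an irrational slope $\beta_\epsilon$ whose limit lies in $(-1-\epsilon,-1)$; since that limit exists, the renormalized count for $\beta_\epsilon$ exceeds $-1-\epsilon$ for all large $\alpha$, hence so does $\sup_\beta$, and letting $\epsilon\to0$ gives $\liminf\ge-1$.

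For the upper bound I would produce a sequence $\alpha_j\to\infty$ along which $\sup_\beta$ is at most $-1+o(1)$. By the reflection symmetry $N_\beta=N_{1/\beta}$ it suffices to treat $\beta\ge1$, and the identity shows that $\beta\ge4$ contributes $-\sqrt\beta/2\le-1$ up to controlled error (with $N_\beta$ eventually vanishing for $\beta>\alpha^2$), so only a bounded range of slopes is relevant. The decisive step is a reduction to finitely many rational slopes: for a fixed threshold $M$ I would show
$$\sup_{\beta}\frac{N_\beta(\alpha)-\alpha^2/2}{\alpha}\le\max_{\substack{p/q\ \mathrm{reduced}\\ pq\le M}}\frac{N_{p/q}(\alpha)-\alpha^2/2}{\alpha}+\frac{C}{\sqrt M}+\frac{C_M}{\alpha}.$$
The mechanism is that a slope whose renormalized value exceeds $-1+\delta$ must have its hypotenuse passing within $o(1)$ of roughly $\delta\alpha$ lattice points (precisely the indices $k$ with $\{\alpha\sqrt\beta-\beta k\}$ near zero), and a near-collinear family of that many lattice points on a segment of length $\asymp\alpha$ forces the slope to be rational $p/q$ with $\sqrt{p^2+q^2}\lesssim1/\delta$, hence $pq\lesssim\delta^{-2}$.

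With this reduction in hand, the arithmetic input is Weyl equidistribution together with the classical $\mathbb{Q}$-linear independence of $\{1\}\cup\{\sqrt d:d\ \mathrm{squarefree}\}$: I would choose $\alpha_j\to\infty$ with $\{\alpha_j\sqrt d\}\to1$ simultaneously for every squarefree $d\le M$. Since any reduced $p/q$ with $pq\le M$ satisfies $\sqrt{pq}=s\sqrt d$ with $d$ squarefree and $s$ bounded, one gets $\{\alpha_j\sqrt{pq}\}\to1$ as well, and the periodic evaluation of the fractional-part sum gives
$$\frac{N_{p/q}(\alpha_j)-\alpha_j^2/2}{\alpha_j}=-1+\frac{1-2\{\alpha_j\sqrt{pq}\}}{2\sqrt{pq}}+o(1)\le-1+o(1).$$
Feeding this into the reduction yields $\sup_\beta\le-1+C/\sqrt M+o(1)$, and letting $M\to\infty$ slowly along the sequence gives $\liminf\le-1$, which together with the first part completes the proof.

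The step I expect to be the main obstacle is the reduction to bounded $pq$: making precise, and uniform in both $\alpha$ and $\beta$, the claim that capturing more than $\alpha^2/2-(1-\delta)\alpha$ lattice points forces the slope to lie within $O_\delta(1/\alpha)$ of a low-complexity rational. This is where the geometry of numbers — counting lattice points in a thin neighborhood of a segment, controlled by the denominator structure of the slope — must be invoked, and where the irrational and large-denominator slopes, which the clean periodic computation does not reach directly, have to be absorbed.
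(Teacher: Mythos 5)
Your proposal follows essentially the same route as the paper: the lower bound via irrational slopes near $1$ (Lemma \ref{lem:irrational}), and the upper bound by reducing any well-performing slope to an $O(1/\alpha)$-neighborhood of a rational with bounded denominator (the content of Lemmas \ref{curve} and \ref{closetorational}), aligning $\{\alpha\sqrt{pq}\}$ near $1$ for all such rationals simultaneously via Besicovitch's linear-independence theorem and Kronecker/Weyl (Lemma \ref{align}), and then absorbing nearby slopes via a stability estimate (Lemma \ref{mimic}). The only caution is that your displayed reduction inequality, with $\max_{pq\le M}N_{p/q}(\alpha)$ on the right-hand side and error $C/\sqrt{M}$, is not literally correct for general $\alpha$ --- tilting the hypotenuse slightly away from $p/q$ can gain back about $\alpha/(2\sqrt{pq})$ lattice points relative to $N_{p/q}(\alpha)$ (as much as $\alpha/2$ when $p=q=1$), which is why the paper's Lemma \ref{mimic} bounds nearby slopes by $\alpha^2/2-\alpha+O_{p,q}(1+\varepsilon\alpha)$ directly at the aligned areas rather than by $N_{p/q}(\alpha)$ plus a small error --- but this is precisely the step you flag as needing care, and the mechanism you describe for it is the right one.
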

\begin{figure}[h!]
\centering
\includegraphics[width=.6\textwidth]{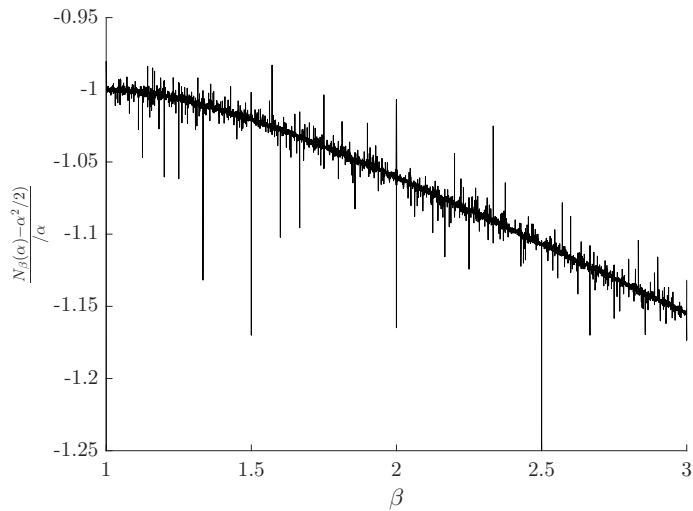} 
\caption{A `bad' area at $\alpha = 15541.957707$. No triangle with area
$\alpha^2/2$ is particularly good at capturing lattice points. The downward sloping
curve is $(-\sqrt{\beta}-\sqrt{1/\beta})/2$ (see
Lemma \ref{lem:irrational}). } \label{fig:badarea}
\end{figure}

We illustrate Theorem \ref{badarea} in Figure \ref{fig:badarea}: a `bad' area at
$\alpha = 15541.957707$ was found numerically (essentially by aligning many
rational slopes to perform poorly via Lemmas \ref{mimic} and \ref{align}), where 
$$ 
\sup_{ \beta } \frac{N_\beta (\alpha) - \alpha^2/2}{\alpha} \approx -0.98035,
$$ 
which is close to the worse case value of $-1$.

\subsection{Rational and irrational slopes.} Along
the way to the proof of Theorem \ref{thm1} we obtain several smaller
results; in particular, we obtain fairly explicit control of the behavior of
$N_{\beta}(\alpha)$ for rational slopes $\beta \in \mathbb{Q}$.

\begin{lemma}[Rational slopes] \label{lem:rational}
Suppose that $p$ and $q$ are positive coprime integers. Then
\[
N_{p/q}(\alpha) = \frac{\alpha^2}{2} - \frac{\sqrt{\frac{p}{q}} + \sqrt{\frac{q}{p}} -
 \sqrt{\frac{1}{pq}} \left(1-2\{\alpha \sqrt{p q}\} \right)
 }{2} \alpha + \mathcal{O}_{p,q}(1), \quad \text{as} \quad \alpha \rightarrow
\infty,
\]
where $\{x \} := x - \lfloor x \rfloor$ denotes the fractional part of $x$.
\end{lemma}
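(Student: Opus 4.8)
The plan is to count the lattice points column by column. Writing $\theta = \alpha\sqrt{p/q}$, the column at horizontal coordinate $k \in \mathbb{Z}_{>0}$ contributes exactly $\lfloor \theta - kp/q\rfloor$ positive lattice points when $\theta - kp/q \ge 0$ and none otherwise, so that
\[
N_{p/q}(\alpha) = \sum_{k=1}^{K}\left\lfloor \theta - \frac{kp}{q}\right\rfloor, \qquad K := \left\lfloor \alpha\sqrt{q/p}\right\rfloor,
\]
where $K$ is the largest index for which the summand is nonnegative (columns with $0 \le \theta - kp/q < 1$ contribute a harmless zero). I would then use $\lfloor x\rfloor = x - \{x\}$ to split this into a \emph{polynomial part} $P = \sum_{k=1}^K(\theta - kp/q)$ and a \emph{sawtooth part} $F = \sum_{k=1}^K\{\theta - kp/q\}$, estimating each to accuracy $\mathcal{O}_{p,q}(1)$.

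For the polynomial part I evaluate $P = K\theta - \tfrac{p}{q}\tfrac{K(K+1)}{2}$ exactly. Writing $A = \alpha\sqrt{q/p}$ and $\delta = \{A\} \in [0,1)$ so that $K = A - \delta$, and using $\theta = (p/q)A$, a short computation shows that the terms linear in $\delta A$ cancel in $KA - \tfrac{K^2}{2}$, leaving $P = \alpha^2/2 - \tfrac{\alpha}{2}\sqrt{p/q} + \mathcal{O}_{p,q}(1)$. This cancellation is the reason the final linear coefficient is independent of $\{\alpha\sqrt{q/p}\}$, and I expect it to be the first place where care is needed.

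For the sawtooth part the decisive observation is that $k \mapsto \{\theta - kp/q\}$ is periodic in $k$ with period $q$, since increasing $k$ by $q$ changes the argument by the integer $p$. Because $\gcd(p,q) = 1$, over one full period the residues $kp \bmod q$ run through all of $\{0,1,\ldots,q-1\}$, and since $\{-\ell/q\}$ and $\{\ell/q\}$ sweep the same multiset, the classical identity $\sum_{\ell=0}^{q-1}\{x + \ell/q\} = \tfrac{q-1}{2} + \{qx\}$ (itself a consequence of $\sum_{\ell}\lfloor x+\ell/q\rfloor = \lfloor qx\rfloor$) evaluates the sum over each period to $\tfrac{q-1}{2} + \{q\theta\} = \tfrac{q-1}{2} + \{\alpha\sqrt{pq}\}$. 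This is exactly where the fractional part $\{\alpha\sqrt{pq}\}$ in the statement is produced. Multiplying by the number of full periods $K/q = \alpha/\sqrt{pq} + \mathcal{O}_{p,q}(1)$ and absorbing the incomplete final period (at most $q$ bounded terms) into the error gives $F = \tfrac{\alpha}{\sqrt{pq}}\left(\tfrac{q-1}{2} + \{\alpha\sqrt{pq}\}\right) + \mathcal{O}_{p,q}(1)$.

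Finally I assemble $N_{p/q}(\alpha) = P - F$, simplify via $\tfrac{q-1}{2\sqrt{pq}} = \tfrac12\sqrt{q/p} - \tfrac1{2\sqrt{pq}}$, and group the remaining terms using $-\tfrac{1}{\sqrt{pq}} + \tfrac{2}{\sqrt{pq}}\{\alpha\sqrt{pq}\} = -\tfrac{1}{\sqrt{pq}}\left(1 - 2\{\alpha\sqrt{pq}\}\right)$, which reproduces the claimed formula. The main obstacle is bookkeeping rather than conceptual: I must verify that the coefficient $\tfrac{q-1}{2} + \{\alpha\sqrt{pq}\}$ multiplying the period count, together with the two nested floor operations defining that count, generates only errors of size $\mathcal{O}_{p,q}(1)$ uniformly in $\alpha$, since otherwise a spurious term of linear order in $\alpha$ could survive and corrupt the coefficient.
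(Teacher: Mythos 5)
Your argument is correct, and it reaches the stated formula by a genuinely different route from the paper. The paper works geometrically: it shrinks the triangle to the largest similar triangle $T^*$ whose hypotenuse passes through a lattice point (the shrinkage parameter $b^* = \{\alpha\sqrt{pq}\}/p$ is where the fractional part enters), argues that the convex hull of the captured lattice points differs from $T^*$ only inside two $\mathcal{O}_{p,q}(1)$-sized rectangles near the axes, and then applies Pick's theorem to that hull. You instead count columns directly and split $\sum_{k\le K}\lfloor\theta - kp/q\rfloor$ into a polynomial part and a sawtooth part, extracting $\{\alpha\sqrt{pq}\}$ from the classical identity $\sum_{\ell=0}^{q-1}\{x+\ell/q\} = \tfrac{q-1}{2}+\{qx\}$ applied over each period of length $q$. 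I checked the key steps: the cancellation of the $\delta A$ terms in $KA - K^2/2 = A^2/2 - \delta^2/2$ is genuine, the residues $kp \bmod q$ do exhaust $\{0,\dots,q-1\}$ over a period by coprimality, the per-period sum is the bounded quantity $\tfrac{q-1}{2}+\{\alpha\sqrt{pq}\}$ so the $\mathcal{O}(1)$ uncertainty in the period count and the incomplete final period contribute only $\mathcal{O}_{p,q}(1)$, and the final algebra reproduces the claimed coefficient. Your approach is more elementary and self-contained (no Pick's theorem, no convex-hull comparison) and makes the origin of each term in the linear coefficient transparent; the paper's approach has the advantage of fitting into the convex-hull framework used throughout the rest of the paper (Lemmas \ref{lem:irrationalhull}, \ref{lem:irrationalarea}, \ref{curve}, \ref{closetorational} all reason about $\partial C$), so the rational and irrational cases are treated in parallel. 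Either proof is acceptable for the lemma as stated.
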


Observe that the coefficient of $\alpha$ in this asymptotic formula is
oscillatory. For example, if $p=q=1$, then Lemma \ref{lem:rational} implies that
$$ 
N_{1}(\alpha) = \frac{\alpha^2}{2} - \left(\frac{1}{2} + \{\alpha\} \right)
\alpha + \mathcal{O}(1),
$$ 
which may be understood as periodic oscillation around $\alpha^2/2 - \alpha$. In
contrast, we have the following asymptotic result for irrational slopes. 

\begin{lemma}[Irrational slopes] \label{lem:irrational}
Suppose that $\beta > 0$ is irrational. Then 
$$
N_{\beta}(\alpha) = \frac{\alpha^2}{2} - \frac{\sqrt{\beta} +
\sqrt{1/\beta}  }{2} \alpha + o_\beta(\alpha), \quad \text{as} \quad \alpha \rightarrow
\infty.
$$
\end{lemma}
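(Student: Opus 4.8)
The plan is to count the lattice points column by column and reduce everything to the equidistribution of $(k\beta)_{k \ge 1}$ modulo $1$. Write $A := \alpha\sqrt{\beta}$ for the height of the triangle. A point $(k,m) \in \mathbb{Z}_{>0}^2$ lies in the triangle exactly when $1 \le m \le A - k\beta$, so the $k$-th column contributes $\lfloor A - k\beta \rfloor$ admissible values of $m$ when this is positive and nothing otherwise. Since the columns with $A - k\beta < 0$, together with the at most one column having $0 \le A - k\beta < 1$, contribute zero, I would set $K := \lfloor A/\beta \rfloor = \lfloor \alpha/\sqrt{\beta}\rfloor$ and, using $\lfloor x \rfloor = x - \{x\}$, write
$$ N_\beta(\alpha) = \sum_{k=1}^{K} \lfloor A - k\beta \rfloor = \sum_{k=1}^{K} (A - k\beta) - \sum_{k=1}^{K} \{A - k\beta\}. $$
The two sums are then estimated separately.

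The first sum is purely arithmetic. Writing $K = A/\beta - \{A/\beta\}$ and evaluating $\sum_{k=1}^{K}(A - k\beta) = KA - \beta K(K+1)/2$, I would expand in powers of $A$; the terms of order $A \cdot \{A/\beta\}$ cancel, leaving $A^2/(2\beta) - A/2 + \mathcal{O}_\beta(1)$. Substituting $A = \alpha\sqrt{\beta}$ turns this into $\alpha^2/2 - (\sqrt{\beta}/2)\,\alpha + \mathcal{O}_\beta(1)$, which already supplies the leading term and the $\sqrt{\beta}$ half of the linear coefficient; the bounded remainder is harmless since we only claim an $o_\beta(\alpha)$ error.

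For the second sum the key observation is that $\{A - k\beta\}$ depends on $A$ only through $\{A\}$, so $\sum_{k=1}^{K} \{A - k\beta\} = \sum_{k=1}^{K} \{\{A\} - k\beta\}$. Because $\beta$ is irrational, Weyl's theorem guarantees that $(k\beta)_{k=1}^{K}$ is equidistributed modulo $1$, with discrepancy $D_K \to 0$; moreover the (extreme) discrepancy is invariant under the translation by $\{A\}$ and the reflection $k\beta \mapsto -k\beta$, so the shifted sequence $(\{A\} - k\beta)_{k=1}^{K}$ has the same vanishing discrepancy. Applying Koksma's inequality to the bounded-variation function $x \mapsto x$ on $[0,1)$ then gives $\sum_{k=1}^{K}\{A - k\beta\} = K\int_0^1 x\,dx + o(K) = K/2 + o(K)$. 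Since $K = \alpha/\sqrt{\beta} + \mathcal{O}(1)$ with $\beta$ fixed, this equals $\alpha/(2\sqrt{\beta}) + o(\alpha)$. Combining with the arithmetic part and writing $1/\sqrt{\beta} = \sqrt{1/\beta}$ yields the claimed expansion.

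The main obstacle, and the only place where irrationality enters, is the fractional-part sum: both the shift $A$ and the number of summands $K$ grow with $\alpha$ simultaneously, so one cannot simply freeze the shift and let $K \to \infty$. The clean way around this is to pass to the translation-invariant discrepancy of $(k\beta)_{k=1}^{K}$, which eliminates the $\alpha$-dependence of the shift and reduces matters to the single input $D_K \to 0$. It is exactly this step that fails for rational $\beta$—there $(k\beta)$ visits only finitely many residues and its discrepancy does not vanish—which is why the rational case of Lemma \ref{lem:rational} instead retains an oscillatory term in $\{\alpha\sqrt{pq}\}$.
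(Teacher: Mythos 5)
Your proof is correct, but it takes a genuinely different route from the paper. You count the triangle column by column, write $N_\beta(\alpha)=\sum_{k=1}^{K}\lfloor A-k\beta\rfloor$ with $A=\alpha\sqrt{\beta}$ and $K=\lfloor A/\beta\rfloor$, evaluate the arithmetic-progression part exactly (the $\{A/\beta\}\cdot A$ terms do cancel, as you claim), and control the fractional-part sum $\sum_{k=1}^K\{\{A\}-k\beta\}$ via Weyl equidistribution of $(k\beta)$ together with Koksma's inequality, using the (essential) observation that the discrepancy of the shifted, reflected sequence is comparable to that of $(k\beta)_{k\le K}$ uniformly in the shift $\{A\}$ --- this is exactly the point where simultaneous growth of the shift and of $K$ must be handled, and you handle it correctly. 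The paper instead passes to the convex hull $C$ of the captured lattice points and applies Pick's theorem, reducing the lemma to two separate statements: Lemma \ref{lem:irrationalhull}, that for irrational $\beta$ the ``curved'' part of $\partial C$ carries only $o_\beta(N)$ lattice points (proved by a pigeonhole argument on the rational slopes of small denominator appearing on $\partial C$), and Lemma \ref{lem:irrationalarea}, that the area of $C$ differs from that of the triangle by $o_\beta(N)$ (proved by equidistribution of the Kronecker sequence, i.e.\ the same input you use). Your argument is shorter and more self-contained for this particular lemma; the paper's detour through the convex hull is not wasted, however, since the same machinery (the quantity $d_\beta(\alpha)$, the decomposition of $\partial C$ into rational segments, and the if-and-only-if characterization of irrationality in Lemma \ref{lem:irrationalhull} with its quantitative Diophantine corollary) is reused in Lemmas \ref{curve} and \ref{closetorational} and in the proofs of Theorems \ref{limitset} and \ref{badarea}. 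One cosmetic remark: your parenthetical ``at most one column having $0\le A-k\beta<1$'' requires $\beta\ge 1$, but this is immaterial since every such column contributes $\lfloor A-k\beta\rfloor=0$ anyway.
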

Here, the coefficient of $\alpha$ is non-oscillatory, and thus, the behavior of
$N_\beta(\alpha)$ eventually stabilizes relative to $\alpha$. However, by the
arithmetic mean geometric mean inequality
$$
\frac{\sqrt{\beta} +
\sqrt{1/\beta}  }{2}  \ge 1  \quad \text{with equality if and only if $\beta =
1$},
$$
and therefore, if $\beta > 0$ is irrational, then there exists a constant
$\delta_\beta > 0$ such that
$$
N_\beta(\alpha) < \frac{\alpha^2}{2} - (1+\delta_\beta) \alpha
$$
for sufficiently large $\alpha$ (depending on $\beta$). Thus, when $\alpha$ is
sufficiently large (depending on $\beta$) the number of lattice points captured
by a triangle with an irrational slope $\beta$ is always less than the number
captured by some triangle with a rational slope. Indeed, consider the rational
slope $(n+1)/n$. A Taylor expansion of the result of Lemma \ref{lem:rational}
for this slope yields
$$ 
N_{(n+1)/n}(\alpha) \geq \frac{\alpha^2}{2} - \left(1 +
\frac{1}{2n}\right)\alpha + \mathcal{O}_{n}(1).
$$
Hence, if $1/(2n) < \delta_\beta$, then $ N_{(n+1)/n}(\alpha) > N_\beta(\alpha)
$ for all sufficiently large $\alpha$ depending on $\beta$ and $n$ (which can be
chosen in terms of $\beta$).

\subsection{Optimality of the right isosceles triangle}

The original question of Laugesen \& Liu was based upon the conjecture that
triangles which capture a maximal number of lattice points may not approach the
right isosceles triangle in the large area limit. However, intuitively the right
isosceles triangle should perform quite well when its hypotenuse intersects
lattice points.  We show that if $\alpha = n$, where $n \in \mathbb{N}$, then
the isosceles triangle captures strictly more elements than any other triangle.
However, at the same time, the right isosceles triangle is only better than a
generic irrational slope close to 1 for slightly more than half the time
(results of this type were suspected in Laugesen \& Liu \cite{LaugesenLiu2016},
see their \S 9).

\begin{proposition} \label{prop1} 
We have, for every $n \in \mathbb{N}$,
$$ N_{1}\left(n\right) = \frac{n(n-1)}{2}  > \sup_{\beta \neq 1}{ N_{\beta}\left(n\right)}.
$$
If $\beta \in \mathbb{R} \setminus \mathbb{Q}$ is a positive irrational number, then
$$ \lim_{T \rightarrow \infty}{ \frac{1}{T} \left| \left\{ 0 \leq \alpha \leq T:
N_{1}(\alpha) < N_{\beta}(\alpha)\right\} \right| }= \begin{cases}
\frac{3 - \sqrt{\beta} - \sqrt{1/\beta}}{2} \qquad &\mbox{if}~\beta \in \left( \frac{7-3\sqrt{5}}{2}, \frac{7+3\sqrt{5}}{2}\right) \\
0 \qquad &\mbox{otherwise.}
\end{cases}
$$
\end{proposition}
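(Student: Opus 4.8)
The proposition makes two essentially independent assertions, and I would prove them separately. Throughout write $t=\sqrt{\beta}$, so that $N_\beta(\alpha)$ counts the pairs $(k,m)\in\mathbb{Z}_{>0}^2$ with $tk+m/t\le \alpha$; the reflection $(k,m)\mapsto(m,k)$ across the diagonal shows $N_\beta=N_{1/\beta}$, so in both parts it suffices to treat $t\ge 1$. For the first (exact) assertion I would argue column by column. Setting $\phi_k(t)=t(n-tk)=nt-kt^2$, the number of admissible $m\ge 1$ in column $k$ is $\max(0,\lfloor\phi_k(t)\rfloor)$, so that $N_\beta(n)=\sum_{k\ge 1}\lfloor\phi_k(t)\rfloor^{+}$. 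The point of taking $\alpha=n\in\mathbb{N}$ and $t=1$ is that $\phi_k(1)=n-k$ is an integer for \emph{every} $k$: there is no fractional loss, and $N_1(n)=\sum_{k=1}^{n-1}(n-k)=n(n-1)/2$ exactly.

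To compare with $t>1$ I would use two exact facts. First, since $\phi_k$ is concave in $t$, summing the tangent-line bound at $t=1$ over $k=1,\dots,n-1$ telescopes to $\sum_{k=1}^{n-1}\phi_k(t)=n(n-1)\,t(1-t/2)$, whence the ``area slack'' is $\binom{n}{2}-\sum_{k=1}^{n-1}\phi_k(t)=\tfrac{n(n-1)}{2}(t-1)^2$. Second, writing $K=\lfloor n/t\rfloor$, only the columns $1\le k\le K$ contribute, and a short rearrangement gives
\[ N_\beta(n)=\binom{n}{2}-\frac{n(n-1)}{2}(t-1)^2+\sum_{k=K+1}^{n-1}\bigl(kt^2-nt\bigr)-\sum_{k=1}^{K}\{\phi_k(t)\}. \]
Thus the desired strict inequality $N_\beta(n)<\binom{n}{2}$ is equivalent to
\[ \sum_{k=1}^{K}\{\phi_k(t)\}+\frac{n(n-1)}{2}(t-1)^2>\sum_{k=K+1}^{n-1}\bigl(kt^2-nt\bigr), \]
which compares the fractional waste in the surviving columns, together with the quadratic slack, against the mass of the fully lost columns.

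This last inequality is where I expect the real work to lie, and it is the main obstacle. Near $t=1$ both sides are of size $n^2(t-1)^2$, so the estimate is tight at second order and the nonnegative sum $\sum_{k\le K}\{\phi_k(t)\}$ cannot simply be discarded. A clean way to organize the compensation is the dual picture: a point $(k,m)$ is counted precisely for $t$ in the interval $I_{k,m}=[t_-,t_+]$ with $t_-t_+=m/k$ and $t_-+t_+=n/k$, and the claim becomes that $t=1$ lies in strictly more of these intervals than any other $t$. Pairing $I_{k,m}$ with $I_{m,k}$ (its image under $t\mapsto 1/t$) shows that a pair can cover a fixed $t\neq 1$ with multiplicity $2$ only when $k+m\le n$, which forces every ``gain'' interval to be matched by a ``lost'' one; making this matching strict—and doing so uniformly in $t$, so that rational and irrational $\beta$ need no separate treatment since the argument is purely about covering multiplicity—is the crux.

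For the second assertion I would feed the two asymptotic expansions into the difference. Lemma~\ref{lem:rational} with $p=q=1$ gives $N_1(\alpha)=\alpha^2/2-(\tfrac12+\{\alpha\})\alpha+\mathcal{O}(1)$, while Lemma~\ref{lem:irrational} gives $N_\beta(\alpha)=\alpha^2/2-\tfrac{\sqrt\beta+\sqrt{1/\beta}}{2}\alpha+o_\beta(\alpha)$. Subtracting,
\[ N_\beta(\alpha)-N_1(\alpha)=\bigl(\{\alpha\}-c_\beta\bigr)\alpha+o_\beta(\alpha),\qquad c_\beta:=\frac{\sqrt\beta+\sqrt{1/\beta}-1}{2}. \]
Hence, away from the $\alpha$ with $\{\alpha\}$ very close to $c_\beta$, the sign of $N_\beta-N_1$ is the sign of $\{\alpha\}-c_\beta$. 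Since $\alpha$ is a continuous parameter, $\{\alpha\}$ sweeps $[0,1)$ linearly and $|\{0\le\alpha\le T:\{\alpha\}>c\}|=(1-c)T+\mathcal{O}(1)$ for $c\in[0,1]$; the set where $|\{\alpha\}-c_\beta|<\varepsilon$ has density at most $2\varepsilon$, which the $o_\beta(\alpha)$ term forces us to excise but which vanishes as $\varepsilon\to 0$. Therefore the density equals $1-c_\beta$ when $c_\beta\in(0,1)$ and $0$ when $c_\beta\ge 1$. By the arithmetic--geometric mean inequality $c_\beta\ge\tfrac12>0$ always, while $c_\beta<1$ is equivalent to $\sqrt\beta+\sqrt{1/\beta}<3$, i.e. to $\beta\in\bigl(\tfrac{7-3\sqrt5}{2},\tfrac{7+3\sqrt5}{2}\bigr)$; in that range $1-c_\beta=\tfrac{3-\sqrt\beta-\sqrt{1/\beta}}{2}$, which is exactly the claimed dichotomy. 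The only delicate point here is the boundary-layer excision, which is routine given the $\varepsilon$-argument, so that the genuinely hard content of the proposition is the extremal inequality of the first assertion.
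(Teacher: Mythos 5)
Your treatment of the second assertion is correct and complete: subtracting the expansions of Lemma \ref{lem:rational} (with $p=q=1$) and Lemma \ref{lem:irrational}, reducing to the density of $\{\alpha\}>c_\beta$ with $c_\beta=(\sqrt\beta+\sqrt{1/\beta}-1)/2$, and excising the $O(\varepsilon)$-density boundary layer is exactly how this part is meant to go (the paper's written proof of the Proposition in fact only addresses the first assertion and leaves this computation implicit).

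The first assertion, however, is where your proposal has a genuine gap, and you say so yourself: after the column-by-column reformulation you reduce $N_\beta(n)<\binom{n}{2}$ to the inequality
$\sum_{k\le K}\{\phi_k(t)\}+\tfrac{n(n-1)}{2}(t-1)^2>\sum_{k>K}(kt^2-nt)$,
observe correctly that both sides are of size $n^2(t-1)^2$ so nothing can be discarded, and then sketch a dual covering-multiplicity argument that you explicitly do not close. The sketch as stated does not suffice even in outline: your pairing of $I_{k,m}$ with $I_{m,k}$ shows that a pair covering $t\neq 1$ with multiplicity $2$ must also cover $t=1$, but a pair with $k+m>n$ (hence not covering $1$) can still cover $t$ with multiplicity $1$, and there are order $n(t-1)$-many such pairs; bounding their total contribution against the losses from pairs that cover $1$ but not $t$ is precisely the tight second-order comparison you started with, so the reformulation has not actually reduced the difficulty. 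The paper avoids all of this with a much softer argument: writing the competitor triangle with legs $x\le y$, $xy=n^2$, Pick's theorem plus the crude bounds $A\le n^2/2$ and $D\le\lfloor x\rfloor$ give $N_{y/x}(n)\le \tfrac{n^2}{2}-\tfrac{\lfloor y\rfloor+1}{2}$, which settles the claim whenever $\lfloor y\rfloor\ge n+1$; in the remaining case $n<y<n+1$ the whole triangle lies below the line $X+Y=n+1$, so it captures at most the $n(n-1)/2$ points with $k+m\le n$. I would recommend replacing your interval-covering programme with an argument of this soft type (or completing the matching with a quantitative count of the multiplicity-one pairs), since as written the central inequality of the first assertion remains unproved.
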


\subsection{Open problems.} \label{open} Many open problems remain. We only list
the few that naturally arise out of the results in this paper; it does seem like
a particularly fruitful area of research.

\begin{enumerate}
\item Can the limit set be completely characterized? Specifically, is there
an explicit subset $\Gamma \subseteq \Lambda$ such that $ \Gamma = S?  $
\item Suppose we define the extended limit set
$$
\tilde{S}  =  \bigcap_{r > 0} \overline{\bigcup_{\alpha > r} \argmax_{\beta > 0}
N_\beta(\alpha)}.
$$
Clearly $S \subseteq \tilde{S}$, and thus $\tilde{S}$ also contains an infinite
subset of $\Lambda$, but does $S = \tilde{S}$?
\item Can any of these results be extended to polygonal shapes? What happens
for shapes that are curved but contain a straight line segment somewhere?
\item The intuition coming from Fourier analysis suggests that a convex curve
with vanishing curvature at a point should still be somewhat well behaved -- is
it possible to get precise results in this intermediate case between strictly
convex and flat line segments?
\item It is well understood that the natural analogue of Pick's theorem, a crucial ingredient in our approach, does not hold in higher dimensions. A substitute is given by the notion of Ehrhart polynomials (see e.g. the very nice book of Beck \& Robins \cite{beck}).
Is it possible to adapt our approach to attack higher-dimensional problems 
by replacing our use of Pick's formula (which we only use in a very mild way) by Ehrhart polynomials?
\end{enumerate}

\subsection{Organization and Notation.} We say
$$
f(x) \lesssim_h g(x) \quad \text{if and only if} \quad f(x) \le C_h \cdot g(x),
$$
for a fixed constant $C_h > 0$ that only depends on $h$. Similarly, we say $f(x)
\gtrsim_h g(x)$  if and only if $g(x) \lesssim_h f(x)$, and we say $f(x) \sim_h
g(x)$ when $f(x) \lesssim_h g(x)$ and $f(x) \gtrsim_h g(x)$. If the implicit
constant does not depend on a parameter $h$, then we simply write $\lesssim$,
$\gtrsim$, and $\sim$, respectively.  The remainder of this paper is organized as
follows. \S \ref{sec:tools} discusses Pick's theorem, establishes basic results
for irrational slopes, and presents a basic fact from Number Theory. \S
\ref{sec:proofthm1} is devoted to the proof of Theorem 1. \S \ref{sec:lemmas}
establishes basic results that will be required for the proofs of Theorem
\ref{limitset} and Theorem \ref{badarea}.  Finally, Theorems \ref{limitset} and
\ref{badarea} are proven in \S \ref{sec:proofthm23}.

\section{Some Useful Tools} \label{sec:tools}
\subsection{Pick's Theorem}
Pick's Theorem \cite{pick} is a classical statement relating the
area $A$ of a polygon whose vertices are on integer lattice points to the number
$I$ of interior lattice points and the number $B$ of lattice points on the
boundary via
\[
A = I + \frac{B}{2} - 1.
\]

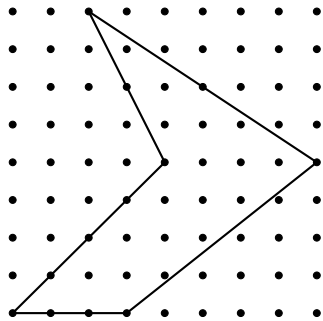
\begin{figure}[h!]
\begin{tikzpicture}[scale=0.5]
\draw [thick]  (1,1) -- (5,5);
\draw [thick]  (5,5) -- (3,9);
\draw [thick]  (3,9) -- (9,5);
\draw [thick]  (9,5) -- (4,1);
\draw [thick]  (4,1) -- (1,1);
\filldraw (1,1) circle (0.09cm);
\filldraw (1,2) circle (0.09cm);
\filldraw (1,3) circle (0.09cm);
\filldraw (1,4) circle (0.09cm);
\filldraw (1,5) circle (0.09cm);
\filldraw (1,6) circle (0.09cm);
\filldraw (1,7) circle (0.09cm);
\filldraw (1,8) circle (0.09cm);
\filldraw (1,9) circle (0.09cm);
\filldraw (2,1) circle (0.09cm);
\filldraw (2,2) circle (0.09cm);
\filldraw (2,3) circle (0.09cm);
\filldraw (2,4) circle (0.09cm);
\filldraw (2,5) circle (0.09cm);
\filldraw (2,6) circle (0.09cm);
\filldraw (2,7) circle (0.09cm);
\filldraw (2,8) circle (0.09cm);
\filldraw (2,9) circle (0.09cm);
\filldraw (3,1) circle (0.09cm);
\filldraw (3,2) circle (0.09cm);
\filldraw (3,3) circle (0.09cm);
\filldraw (3,4) circle (0.09cm);
\filldraw (3,5) circle (0.09cm);
\filldraw (3,6) circle (0.09cm);
\filldraw (3,7) circle (0.09cm);
\filldraw (3,8) circle (0.09cm);
\filldraw (3,9) circle (0.09cm);
\filldraw (4,1) circle (0.09cm);
\filldraw (4,2) circle (0.09cm);
\filldraw (4,3) circle (0.09cm);
\filldraw (4,4) circle (0.09cm);
\filldraw (4,5) circle (0.09cm);
\filldraw (4,6) circle (0.09cm);
\filldraw (4,7) circle (0.09cm);
\filldraw (4,8) circle (0.09cm);
\filldraw (4,9) circle (0.09cm);
\filldraw (5,1) circle (0.09cm);
\filldraw (5,2) circle (0.09cm);
\filldraw (5,3) circle (0.09cm);
\filldraw (5,4) circle (0.09cm);
\filldraw (5,5) circle (0.09cm);
\filldraw (5,6) circle (0.09cm);
\filldraw (5,7) circle (0.09cm);
\filldraw (5,8) circle (0.09cm);
\filldraw (5,9) circle (0.09cm);
\filldraw (6,1) circle (0.09cm);
\filldraw (6,2) circle (0.09cm);
\filldraw (6,3) circle (0.09cm);
\filldraw (6,4) circle (0.09cm);
\filldraw (6,5) circle (0.09cm);
\filldraw (6,6) circle (0.09cm);
\filldraw (6,7) circle (0.09cm);
\filldraw (6,8) circle (0.09cm);
\filldraw (6,9) circle (0.09cm);
\filldraw (7,1) circle (0.09cm);
\filldraw (7,2) circle (0.09cm);
\filldraw (7,3) circle (0.09cm);
\filldraw (7,4) circle (0.09cm);
\filldraw (7,5) circle (0.09cm);
\filldraw (7,6) circle (0.09cm);
\filldraw (7,7) circle (0.09cm);
\filldraw (7,8) circle (0.09cm);
\filldraw (7,9) circle (0.09cm);
\filldraw (8,1) circle (0.09cm);
\filldraw (8,2) circle (0.09cm);
\filldraw (8,3) circle (0.09cm);
\filldraw (8,4) circle (0.09cm);
\filldraw (8,5) circle (0.09cm);
\filldraw (8,6) circle (0.09cm);
\filldraw (8,7) circle (0.09cm);
\filldraw (8,8) circle (0.09cm);
\filldraw (8,9) circle (0.09cm);
\filldraw (9,1) circle (0.09cm);
\filldraw (9,2) circle (0.09cm);
\filldraw (9,3) circle (0.09cm);
\filldraw (9,4) circle (0.09cm);
\filldraw (9,5) circle (0.09cm);
\filldraw (9,6) circle (0.09cm);
\filldraw (9,7) circle (0.09cm);
\filldraw (9,8) circle (0.09cm);
\filldraw (9,9) circle (0.09cm);
\end{tikzpicture}
\caption{A polygon with 17 interior lattice points, $12$ boundary lattice points, and area $22$.} 
\end{figure}

The triangles that we are interested in do not, in general, have vertices on
lattice points, and therefore, Pick's Theorem does not directly apply.
Rather, given a triangle $T$ we will consider the convex hull $C$ of the
lattice points contained in $T$. The convex hull $C$ is a polygonal domain whose
vertices are on lattice points, which contains the same number of lattice points
as $T$. By Pick's Theorem, the total number $I+B$ of lattice points contained in $C$ is
$$
I+B = A + \frac{B}{2} + 1,
$$
where $I$ is the number of interior lattice points in $C$, $B$ is the number of
lattice points on the boundary of $C$, and $A$ is the area of $C$. Thus, by Pick's
Theorem, we have reduced the problem of determining the number of lattice points
in $T$ to estimating the area of the convex hull $C$ as well as the number of
lattice points on the boundary of $C$.

\subsection{Irrational slopes.}
This section presents a self-contained geometric-combinatorial characterization
of irrationality. Let $C_{\beta,\gamma}(N)$ denote the convex hull of the
nonnegative lattice points under the line $y = \beta x + \gamma$ whose
$x$-coordinate is at most $N$. More precisely,
$$
C_{\beta,\gamma}(N) := \text{convex hull}\left( \{ (k,m) \in \mathbb{Z}_{\ge
0}^2 : k \le N \wedge m \le \beta k + \gamma \} \right).
$$

\begin{figure}[h!]
\centering
\begin{tikzpicture}[scale=0.6]
\draw [thick, ->]  (0,0) -- (10,0);
\draw [thick, ->]  (0,0) -- (0,10);
\filldraw (0,0) circle (0.09cm);
\filldraw (1,0) circle (0.09cm);
\filldraw (2,0) circle (0.09cm);
\filldraw (3,0) circle (0.09cm);
\filldraw (4,0) circle (0.09cm);
\filldraw (5,0) circle (0.09cm);
\filldraw (6,0) circle (0.09cm);
\filldraw (7,0) circle (0.09cm);
\filldraw (8,0) circle (0.09cm);
\filldraw (9,0) circle (0.09cm);
\filldraw (0,1) circle (0.09cm);
\filldraw (0,2) circle (0.09cm);
\filldraw (0,3) circle (0.09cm);
\filldraw (0,4) circle (0.09cm);
\filldraw (0,5) circle (0.09cm);
\filldraw (0,6) circle (0.09cm);
\filldraw (0,7) circle (0.09cm);
\filldraw (0,8) circle (0.09cm);
\filldraw (0,9) circle (0.09cm);
\filldraw (1,1) circle (0.09cm);
\filldraw (1,2) circle (0.09cm);
\filldraw (1,3) circle (0.09cm);
\filldraw (1,4) circle (0.09cm);
\filldraw (1,5) circle (0.09cm);
\filldraw (1,6) circle (0.09cm);
\filldraw (1,7) circle (0.09cm);
\filldraw (1,8) circle (0.09cm);
\filldraw (1,9) circle (0.09cm);
\filldraw (2,1) circle (0.09cm);
\filldraw (2,2) circle (0.09cm);
\filldraw (2,3) circle (0.09cm);
\filldraw (2,4) circle (0.09cm);
\filldraw (2,5) circle (0.09cm);
\filldraw (2,6) circle (0.09cm);
\filldraw (2,7) circle (0.09cm);
\filldraw (2,8) circle (0.09cm);
\filldraw (2,9) circle (0.09cm);
\filldraw (3,1) circle (0.09cm);
\filldraw (3,2) circle (0.09cm);
\filldraw (3,3) circle (0.09cm);
\filldraw (3,4) circle (0.09cm);
\filldraw (3,5) circle (0.09cm);
\filldraw (3,6) circle (0.09cm);
\filldraw (3,7) circle (0.09cm);
\filldraw (3,8) circle (0.09cm);
\filldraw (3,9) circle (0.09cm);
\filldraw (4,1) circle (0.09cm);
\filldraw (4,2) circle (0.09cm);
\filldraw (4,3) circle (0.09cm);
\filldraw (4,4) circle (0.09cm);
\filldraw (4,5) circle (0.09cm);
\filldraw (4,6) circle (0.09cm);
\filldraw (4,7) circle (0.09cm);
\filldraw (4,8) circle (0.09cm);
\filldraw (4,9) circle (0.09cm);
\filldraw (5,1) circle (0.09cm);
\filldraw (5,2) circle (0.09cm);
\filldraw (5,3) circle (0.09cm);
\filldraw (5,4) circle (0.09cm);
\filldraw (5,5) circle (0.09cm);
\filldraw (5,6) circle (0.09cm);
\filldraw (5,7) circle (0.09cm);
\filldraw (5,8) circle (0.09cm);
\filldraw (5,9) circle (0.09cm);
\filldraw (6,1) circle (0.09cm);
\filldraw (6,2) circle (0.09cm);
\filldraw (6,3) circle (0.09cm);
\filldraw (6,4) circle (0.09cm);
\filldraw (6,5) circle (0.09cm);
\filldraw (6,6) circle (0.09cm);
\filldraw (6,7) circle (0.09cm);
\filldraw (6,8) circle (0.09cm);
\filldraw (6,9) circle (0.09cm);
\filldraw (7,1) circle (0.09cm);
\filldraw (7,2) circle (0.09cm);
\filldraw (7,3) circle (0.09cm);
\filldraw (7,4) circle (0.09cm);
\filldraw (7,5) circle (0.09cm);
\filldraw (7,6) circle (0.09cm);
\filldraw (7,7) circle (0.09cm);
\filldraw (7,8) circle (0.09cm);
\filldraw (7,9) circle (0.09cm);
\filldraw (8,1) circle (0.09cm);
\filldraw (8,2) circle (0.09cm);
\filldraw (8,3) circle (0.09cm);
\filldraw (8,4) circle (0.09cm);
\filldraw (8,5) circle (0.09cm);
\filldraw (8,6) circle (0.09cm);
\filldraw (8,7) circle (0.09cm);
\filldraw (8,8) circle (0.09cm);
\filldraw (8,9) circle (0.09cm);
\node[below] at (0,0) {$0$};
\node[below] at (7,0) {$N$};
\draw [thick,dashed] (0,0.3) -- (7, 9.6);
\draw [ultra thick] (0,0) -- (7,0) -- (7,9) -- (6,8) -- (3,4) -- (0,0);
\draw [ultra thick] (7,9) -- (6,8) -- (3,4) -- (0,0);
\end{tikzpicture}
\caption{The line $y = \beta x + \gamma$ (dashed) and the boundary of the convex hull
$C_{\beta,\gamma}(N)$  (bold).}
\label{fig:irr}
\end{figure}
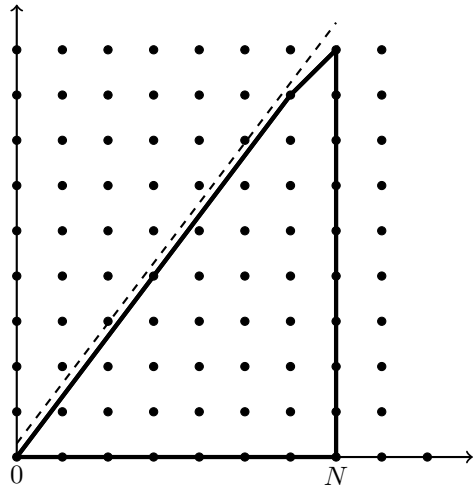

In the following Lemma, we show that $\beta > 0$ is irrational if and only if
the boundary $\partial C_{\beta,\gamma}(N)$ of the convex hull
$C_{\beta,\gamma}(N)$ contains $(1+\beta)N + o_\beta(N)$ lattice points. That is
to say, the part of the boundary of the convex hull that is neither on the
$x$-axis nor on the line $x = N$ contains less than linear lattice points in
$N$.

\begin{lemma}  \label{lem:irrationalhull}
Suppose $\beta > 0$ and $0 \le \gamma < 1$ are arbitrary. Then $\beta > 0$ is
irrational if and only if 
$$
\# \left( \partial C_{\beta,\gamma}(N) \cap \mathbb{Z}^2 \right) = (1+\beta)N +
o_\beta(N), \quad \text{as} \quad N \rightarrow \infty.  
$$
\end{lemma}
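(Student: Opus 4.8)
The plan is to decompose the boundary $\partial C_{\beta,\gamma}(N)$ into three pieces and reduce the whole statement to a single quantity. The bottom piece is the segment on the $x$-axis from $(0,0)$ to $(N,0)$, carrying exactly $N+1$ lattice points; the right piece is the vertical segment on $x=N$ from $(N,0)$ to $(N,\lfloor \beta N+\gamma\rfloor)$, carrying $\lfloor \beta N+\gamma\rfloor+1=\beta N+\mathcal{O}(1)$ lattice points; and the remaining ``upper'' polygonal chain $P$ joins $(0,0)$ to $(N,\lfloor\beta N+\gamma\rfloor)$. Adding these and correcting for the three shared corners gives
$$\#\bigl(\partial C_{\beta,\gamma}(N)\cap\mathbb{Z}^2\bigr)=(1+\beta)N+\#\bigl(P\cap\mathbb{Z}^2\bigr)+\mathcal{O}(1),$$
so the Lemma is equivalent to the statement that $P$ carries $o_\beta(N)$ lattice points exactly when $\beta$ is irrational. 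Everything reduces to counting lattice points on this upper chain.

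For the forward direction I would argue by a dichotomy on the denominators of the edge slopes. List the edges of $P$ as $e_1,\dots,e_m$; each $e_i$ has a primitive direction vector $(b_i,a_i)$ with $\gcd(a_i,b_i)=1$ and $b_i\ge 1$, it carries $w_i/b_i+1$ lattice points where $w_i$ is its horizontal width, and $\sum_i w_i=N$. The geometric input is that every vertex of $P$ is the highest lattice point below the line in its column, hence lies within vertical distance $1$ of the line $y=\beta x+\gamma$; comparing the two endpoints of $e_i$ then forces
$$w_i\,\bigl|a_i/b_i-\beta\bigr|<1.$$
Now fix a threshold $D$. Edges with $b_i>D$ together contribute at most $\tfrac1D\sum_i w_i=N/D$ lattice points. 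For edges with $b_i\le D$, irrationality enters: the number $\rho_D(\beta):=\min\{|a/b-\beta|:1\le b\le D\}$ is strictly positive, so each such edge has bounded width $w_i\le 1/\rho_D(\beta)$; moreover the edge slopes are distinct fractions of denominator at most $D$ lying in $(\beta-1,\beta+1)$, of which there are only $\mathcal{O}(D^2)$, so there are at most $\mathcal{O}(D^2)$ such edges regardless of $N$. Hence the low-denominator edges contribute at most $C(\beta,D)=\mathcal{O}(D^2)/\rho_D(\beta)$ lattice points, a bound independent of $N$. Combining, $\#(P\cap\mathbb{Z}^2)\le C(\beta,D)+N/D+\mathcal{O}(1)$, so $\limsup_{N\to\infty}\#(P\cap\mathbb{Z}^2)/N\le 1/D$ for every $D$, and letting $D\to\infty$ yields $\#(P\cap\mathbb{Z}^2)=o_\beta(N)$.

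For the converse I would show that a rational slope forces a linear number of lattice points on $P$. If $\beta=p/q$ in lowest terms, the defect $\{(p/q)k+\gamma\}$ depends only on $k\bmod q$, so the residue class $r$ minimizing it produces $\approx N/q$ collinear lattice points $(k,\lfloor(p/q)k+\gamma\rfloor)$ with $k\equiv r\pmod q$, all lying on a single supporting line of slope $p/q$ and therefore on one edge of $P$. This gives $\#(P\cap\mathbb{Z}^2)\ge N/q-\mathcal{O}(1)$, so the asymptotic $(1+\beta)N+o(N)$ fails; contrapositively, if the asymptotic holds then $\beta$ must be irrational.

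The main obstacle is the forward (irrational) direction, and within it the uniform-in-$N$ control of the low-denominator edges: one must ensure that pushing a slope toward the irrational $\beta$ costs denominator, so that only boundedly many edges of $P$ can have denominator below a fixed threshold and each such edge has bounded width. This is precisely where irrationality is used, and it is the exact mirror of the rational construction, in which the single slope $p/q$ is realized exactly and yields an arbitrarily long edge. The elementary estimate $w_i\,|a_i/b_i-\beta|<1$ is the crux; the remainder is bookkeeping with the three boundary pieces.
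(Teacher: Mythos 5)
Your proof is correct and follows essentially the same route as the paper's: both reduce the count to the lattice points on the upper convex chain and combine the same three ingredients — monotonicity of the edge slopes along the chain, the $\mathcal{O}(D^2)$ bound on rationals of denominator at most $D$ near $\beta$, and the positive distance $\rho_D(\beta)$ from an irrational $\beta$ to such rationals, which bounds the width of each low-denominator edge. You organize this as a direct two-threshold estimate rather than the paper's proof by contradiction, and you work out the rational converse explicitly (the paper only sketches it); the one nit is that comparing the two endpoints of an edge, each within vertical distance $1$ of the line, gives $w_i\left|a_i/b_i-\beta\right|<2$ rather than $<1$, which changes nothing in the argument.
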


\begin{proof}[Proof]
Fix $\beta > 0$ and $0 \le \gamma < 1$. Observe that the number of lattice
points on $\partial C_{\beta,\gamma}(N)$ that are either on the $x$-axis or the
line $x = N$ is
$$
\# \left\{ (k,m) \in  \partial C_{\beta,\gamma}(N) \cap \mathbb{Z}^2 : m = 0
\vee k = N \right\} = (1+\beta)N + \mathcal{O}(1).
$$
Therefore, it suffices to show that the number of lattice points on $\partial
C_{\beta,\gamma}(N)$ that are neither on the $x$-axis nor on the line $x = N$
is
$$
\# \left\{ (k,m) \in  \partial C_{\beta,\gamma}(N) \cap \mathbb{Z}^2 : m > 0
\wedge k < N \right\} = o_\beta(N).
$$
Each of these lattice points has a unique $x$-coordinate so if we define
$$
A := \left\{ k < N : \left( \exists m > 0 : (k,m) \in \partial
C_{\beta,\gamma}(N) \cap \mathbb{Z}^2 \right) \right\},
$$
then it suffices to show that
$$
\# A = o_\beta(N), \quad \text{as} \quad N \rightarrow \infty.
$$
If $\beta$ is rational, then it is not hard to show that $A$ will contain linear
lattice points in $N$ (e.g., see the proof of Lemma \ref{lem:rational}).
Suppose $\beta > 0$ is irrational; we prove by contradiction. Without loss of
generality we may suppose that $\beta \ge 1$ (otherwise, we may consider the
triangle with slope $1/\beta$ which encloses the same number of positive lattice
points). Suppose there exists a constant $\varepsilon > 0$ such that for
arbitrarily large $N$ 
$$ 
\# A \ge \varepsilon N.
$$
The following argument is independent of $\gamma$. We argue as follows: for at
least $\varepsilon N/2$ elements in $A$, it is true that the next element in $A$
is at distance less than $4/\varepsilon$. If that were false, then
$$N \ge \max_{a \in A}{a} - \min_{a \in A}{a} \geq \left(\frac{\varepsilon}{2} N\right) \frac{4}{\varepsilon} \geq 2N,$$
which is a contradiction. We now study the slope of the boundary of the convex
hull $C_{\beta,\gamma}(N)$ between each of these $\varepsilon N/2$ points, and their
following points in $A$. Since each of the following points is at most distance
$4/\varepsilon$ away, it is clear that each slope is a rational number
$p/q$ with denominator less than $4/\varepsilon$ and $p/q \leq \beta + 1$.
The cardinality of this set of slopes is bounded
$$ \# \left\{ \frac{p}{q}: 1 \leq q \leq \frac{4}{\varepsilon} \wedge 1 \leq p
\leq (\beta+1)q \right\} \leq \sum_{q=1}^{\left\lfloor 4/\varepsilon \right\rfloor} (\beta+1)q \le \frac{16 (1+\beta)}{\varepsilon^2}.$$
The second ingredient is a consequence of convexity: consecutive slopes are monotonically decreasing. The third ingredient
is that the slope cannot be constant over too long a stretch: more precisely, let
$$ \delta_{\beta,\varepsilon} = \min\left\{ \left|\beta- \frac{p}{q}\right|: 1 \leq q \leq
\frac{4}{\varepsilon} \wedge 1 \leq p \leq (\beta+1)q  \right\}.
$$
We emphasize that $\delta_{\beta,\varepsilon}$ only depends on $\beta$ and
$\varepsilon$.  Since $\beta$ is irrational, we have that
$\delta_{\beta,\varepsilon} > 0$. Let us now assume that the slope $p/q$ occurs
over a long stretch. If $p/q > \beta$, then $p/q \geq \beta+
\delta_{\beta,\varepsilon}$ and we see that the stretch can be
at most of length $\delta_{\beta,\varepsilon}^{-1}$ (because the line would otherwise intersect
the irrational line). If $p/q < \beta$,
then the line would eventually (depending on $\delta_{\beta,\varepsilon}$) be at distance bigger than 1 from the irrational line and this would allow
us to identify a lattice point outside the convex hull, which would be a contradiction. Altogether, this implies that
$$ \frac{\varepsilon N}{2} \leq  \frac{16 (1+\beta)}{\varepsilon^2}
\frac{1}{\delta_{\beta,\varepsilon}},
$$
and hence
$$
N \leq  \frac{32 (1+\beta)}{\varepsilon^3}
\frac{1}{\delta_{\beta,\varepsilon}} < \infty,
$$ 
which is the desired contradiction.
\end{proof}

We remark that the asymptotic error $o_\beta(N)$ cannot, in general, be
improved because for any fixed $N$ one can take an irrational number
sufficiently close to 1 such that the error term is actually arbitrarily close
to order $N$.  Moreover, the convergence of the error term $o(N)/N$ to 0 can
seen to be arbitrarily slow by considering slopes given by Liouville-type
numbers 
$$ 
\sum_{n=1}^{\infty}{\frac{1}{10^{n!}}}, \quad
\sum_{n=1}^{\infty}{\frac{1}{10^{(n!^{n!})}}}, \dots \qquad \mbox{that are
extremely well approximated by rationals.}
$$
However, the proof can be made quantitative under an additional assumption on
$\beta$ as the next Corollary shows. We have no reason to assume that the
following result is sharp; it seems likely that using more powerful techniques
(the continued fraction expansion of $\beta$) one should be able to obtain much
stronger results. 

\begin{corollary} Let $\mu > 0$. If $\beta$ satisfies the diophantine condition
$$ \forall~\frac{p}{q} \in \mathbb{Q}: \qquad \left|\beta- \frac{p}{q}\right| \geq \frac{c}{q^{\mu}},$$
then
$$ 
\# \left\{ (k,m) \in \partial C_{\beta,\gamma}(N) \cap \mathbb{Z}^2 : m > 0
\wedge k < N \right\} = \mathcal{O}\left(N^{\frac{\mu + 2}{\mu+3}}\right).
$$
\end{corollary}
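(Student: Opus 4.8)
The plan is to reopen the proof of Lemma~\ref{lem:irrationalhull} and make it quantitative, replacing the qualitative gap $\delta_{\beta,\varepsilon}>0$ by the effective bound supplied by the diophantine hypothesis and then summing over the edges of the convex hull. As in that proof we may assume $\beta\ge 1$, and the argument is independent of $\gamma$; it suffices to bound the number of lattice points of $\partial C_{\beta,\gamma}(N)$ lying strictly above the $x$-axis and strictly to the left of $x=N$. These points lie on a concave lattice polygonal chain from $(0,0)$ to $(N,\lfloor\beta N+\gamma\rfloor)$, which I decompose into maximal edges with reduced slopes $p_i/q_i$. Strict concavity forces the $p_i/q_i$ to be pairwise distinct, and since every hull vertex is a ceiling point sitting within vertical distance $[0,1)$ below the increasing line $y=\beta x+\gamma$, the slopes obey $0\le p_i/q_i<\beta+1$. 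If edge $i$ has horizontal extent $L_i$, then it carries $L_i/q_i+1$ lattice points, and the extents satisfy $\sum_i L_i\le N$.

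The crux is a length bound for a single edge. Both endpoints of edge $i$ are lattice points at vertical distance in $[0,1)$ below the line, and the vertical gap between the line (slope $\beta$) and the edge (slope $p_i/q_i$) is an affine function of $x$; hence this gap changes by exactly $|\beta-p_i/q_i|\,L_i$ across the edge and cannot exceed $1$, giving $L_i<|\beta-p_i/q_i|^{-1}$. This is precisely the mechanism behind $\delta_{\beta,\varepsilon}$ in Lemma~\ref{lem:irrationalhull}, so the geometric work is already in hand; the new step is simply to insert the hypothesis $|\beta-p_i/q_i|\ge c\,q_i^{-\mu}$, which upgrades this to the clean inequality $L_i< c^{-1}q_i^{\mu}$. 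Thus small denominators are forced to yield short edges. I expect this single-edge estimate, together with controlling how many distinct small-denominator slopes can occur, to be the only genuine content; everything downstream is summation.

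It then remains to fix a threshold $Q$ and split the edges. For the small-denominator edges $q_i\le Q$, the number of available reduced slopes in $[0,\beta+1)$ is at most $\sum_{q\le Q}\lceil(\beta+1)q\rceil\lesssim_\beta Q^2$, and each carries at most $L_i+1\le c^{-1}Q^{\mu}+1$ lattice points, so these contribute $\lesssim_\beta Q^{\mu+2}$ in total. For the large-denominator edges $q_i> Q$, the total number of primitive steps is $\sum_{q_i>Q}L_i/q_i< Q^{-1}\sum_i L_i\le N/Q$, so (counting one extra endpoint per edge) these contribute $\lesssim N/Q$. Combining gives a count $\lesssim_\beta Q^{\mu+2}+N/Q$, and choosing $Q=N^{1/(\mu+3)}$ balances the two terms to the stated $\mathcal{O}\!\left(N^{(\mu+2)/(\mu+3)}\right)$. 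I note that the crude per-edge count $L_i/q_i+1\le L_i+1$ (which discards the spacing $q_i$ between lattice points on an edge) is what produces the exponent $(\mu+2)/(\mu+3)$; retaining that factor and rebalancing would instead give the smaller exponent $(\mu+1)/(\mu+2)$, consistent with our expectation that this Corollary is not sharp.
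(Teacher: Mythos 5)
Your proof is correct and rests on the same three ingredients as the paper's own argument, which simply reruns the proof of Lemma \ref{lem:irrationalhull} with the qualitative gap replaced by $\delta_{\beta,\varepsilon}\gtrsim \varepsilon^{\mu}$: counting the rational slopes of small denominator, bounding the length of each constant-slope stretch via the diophantine condition, and using convexity to see that each slope occurs along a single run. The differences are organizational rather than substantive --- you introduce a free denominator threshold $Q$ and optimize it directly instead of tying the threshold to the unknown density $\varepsilon=\#A/N$, and your affine vertical-gap derivation of $L_i<|\beta-p_i/q_i|^{-1}$ unifies the paper's two-case discussion --- so the two proofs are essentially the same.
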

\begin{proof} We argue as above and observe again that the number of possible small fractions satisfies
$$ \# \left\{ \frac{p}{q}: 1 \leq q \leq \frac{4}{\varepsilon} \wedge 1 \leq p
\leq (\beta+1)q \right\} \leq \frac{16 (1+\beta)}{\varepsilon^2}.$$
Since $q \leq 4 \varepsilon^{-1}$, we have, by assumption,
$$ \delta_{\beta,\varepsilon} = \min\left\{ \left|\beta- \frac{p}{q}\right|: 1
\leq q \leq \frac{4}{\varepsilon} \wedge 1 \leq p \leq (\beta+1)q  \right\} \geq
\frac{c}{(4/\varepsilon)^{\mu}}.$$
This shows that
$$N \le \frac{32 (1+\beta)}{\varepsilon^2} \frac{1}{\delta_{\beta,\varepsilon}}
\le \frac{c^\prime}{\varepsilon^{3+\mu}} \qquad \mbox{and thus} \qquad
\varepsilon \leq \frac{c''}{N^{\frac{1}{3 + \mu}}}.$$ This shows that the
maximum size of the set is $$ \varepsilon N \leq c'' N^{\frac{\mu +
2}{\mu+3}}.$$
\end{proof}

We also need that the area of the convex hull $C_{\beta,\gamma}(N)$ approaches
the area enclosed by the line $y = \beta x + \gamma$, the $x$-axis, and the line
$x = N$, see Figure \ref{fig:irr}.

\begin{lemma}  \label{lem:irrationalarea}
Let $\beta > 0$ be an irrational number, and $0 \leq \gamma < 1$ be arbitrary.
Then
$$
|C_{\beta,\gamma}(N)| = \frac{1}{2} \beta N^2 + \gamma N + o_\beta(N), \quad
\text{as} \quad N \rightarrow \infty.
$$
\end{lemma}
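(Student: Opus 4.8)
The plan is to read off the area of $C_{\beta,\gamma}(N)$ from Pick's Theorem, thereby reducing the estimate to two ingredients: a count of the \emph{total} number of lattice points in $C_{\beta,\gamma}(N)$, and the count of \emph{boundary} lattice points already furnished by Lemma \ref{lem:irrationalhull}. The point of routing through Pick is that the sublinear boundary count provided by that lemma is exactly what is needed to make the spurious linear terms cancel.

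First I would note that the region $\{(x,y) : 0 \le x \le N,\ 0 \le y \le \beta x + \gamma\}$ is convex, being an intersection of half-planes, and that it contains precisely the lattice points that generate $C_{\beta,\gamma}(N)$. Consequently the convex hull introduces no additional lattice points, and the upper boundary of $C_{\beta,\gamma}(N)$ lies below the line $y = \beta x + \gamma$. The total number $I + B$ of lattice points in $C_{\beta,\gamma}(N)$ may then be computed column by column:
$$
I + B = \sum_{k=0}^{N} \left( \lfloor \beta k + \gamma \rfloor + 1 \right) = \beta\, \frac{N(N+1)}{2} + (\gamma + 1)(N+1) - \sum_{k=0}^{N} \{ \beta k + \gamma \}.
$$
Because $\beta$ is irrational, the sequence $\{\beta k + \gamma\}$ is equidistributed modulo $1$ by Weyl's theorem, so $\sum_{k=0}^{N} \{\beta k + \gamma\} = \tfrac{1}{2} N + o(N)$. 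This yields
$$
I + B = \tfrac{1}{2}\beta N^2 + \left( \tfrac{\beta}{2} + \gamma + \tfrac{1}{2} \right) N + o_\beta(N).
$$

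Next I would apply Pick's Theorem to the lattice polygon $C_{\beta,\gamma}(N)$, which gives $|C_{\beta,\gamma}(N)| = (I + B) - \tfrac{B}{2} - 1$. Lemma \ref{lem:irrationalhull} supplies the boundary count $B = \#\left(\partial C_{\beta,\gamma}(N) \cap \mathbb{Z}^2\right) = (1+\beta)N + o_\beta(N)$, so that $\tfrac{B}{2} = \left( \tfrac{1}{2} + \tfrac{\beta}{2} \right) N + o_\beta(N)$. Substituting both expressions, the linear coefficients $\tfrac{\beta}{2} + \gamma + \tfrac{1}{2}$ and $-\left(\tfrac{1}{2} + \tfrac{\beta}{2}\right)$ combine to leave exactly $\gamma$, and the constant $-1$ is absorbed into the error, giving
$$
|C_{\beta,\gamma}(N)| = \tfrac{1}{2}\beta N^2 + \gamma N + o_\beta(N),
$$
as claimed.

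The genuine content of the argument is carried entirely by Lemma \ref{lem:irrationalhull}: it is the sublinearity of the boundary count, valid precisely because $\beta$ is irrational, that forces the $\tfrac{B}{2}$ correction to cancel the unwanted linear terms coming from the column sum. The remaining steps are bookkeeping, and the only point deserving care is the verification that the convex hull captures no extra lattice points, so that the column sum really equals $I + B$; this is immediate from the convexity of the bounding trapezoid. One could instead argue geometrically, bounding the deficit between the concave boundary chain and the line directly, but passing through Pick's Theorem makes the cancellation transparent and reuses the boundary estimate rather than re-deriving it.
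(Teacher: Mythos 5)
Your proof is correct, but it takes a genuinely different route from the paper's. The paper proves this lemma directly and geometrically, without Pick's theorem and without invoking Lemma \ref{lem:irrationalhull}: it shows, via the fact that the Kronecker sequence $\{\beta j + \gamma\}$ is uniformly distributed and hence has maximal gap tending to $0$, that every block of $m_{\beta,\varepsilon}$ consecutive columns contains a lattice point within $\varepsilon$ of the line $y = \beta x + \gamma$, so the area deficit between the convex hull and the bounding trapezoid is $O(\varepsilon N)$. You instead run Pick's theorem in reverse: you count all lattice points columnwise, using the other standard consequence of Weyl equidistribution (that the average of $\{\beta k + \gamma\}$ tends to $\tfrac12$), and then subtract $\tfrac{B}{2}$ using the sublinear boundary count of Lemma \ref{lem:irrationalhull}; the linear terms cancel to leave $\gamma N$. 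Both arguments rest on equidistribution of $\{\beta k\}$, and your bookkeeping checks out, including the point (which you rightly flag) that the hull contains no lattice points beyond those of the defining set, since that set consists of all lattice points of a convex region. The trade-off is that your argument makes Lemma \ref{lem:irrationalarea} depend on Lemma \ref{lem:irrationalhull} (harmless, as that lemma is proved independently and earlier), while the paper's proof is self-contained; in exchange, yours replaces the geometric $\varepsilon$-net argument with transparent arithmetic and reuses the boundary estimate rather than re-deriving information about the hull.
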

\begin{proof} Let $\varepsilon > 0$ be arbitrary. 
We will prove the existence of an integer $m_{\beta, \varepsilon} \in \mathbb{N}$ with the
property that for all $k \in \mathbb{N}$
$$\max_{k \leq j \leq k + m_{\beta,\varepsilon}} \left\{ \beta j + \gamma \right\} \geq 1 - \varepsilon,$$
where, as usual, $\left\{x\right\} = x - \left\lfloor x \right\rfloor$ denotes
the fractional part of $x$. This will
then establish the statement as follows: it guarantees that in every consecutive
block of $m_{\beta, \varepsilon}$ lattice points one of them is $\varepsilon$
close to the limiting line $y = \beta x + \gamma$: this immediately shows that
the area of the convex hull is at most $\sim \varepsilon N$ away from the area
enclosed by the line $y = \beta x + \gamma$, the $x$-axis, and the line $x = N$,
which implies the result. For convenience of notation, we consider $\left\{
\beta j + \gamma \right\}_{j=1}^{\infty}$ a sequence on the torus $\mathbb{T}
\cong [0,1]$. The desired statement would follow if we knew that there exists
$m_{\beta, \varepsilon} \in \mathbb{N}$ with the property that for all $k \in
\mathbb{N}$ $$  \left\{  \beta j: k \leq j \leq k + m_{\beta,
\varepsilon}\right\} \qquad \mbox{is
a}~\varepsilon\mbox{-net}~\mbox{on}~\mathbb{T}.$$ Recall that a
$\varepsilon$-net is a set of points such that every element of $\mathbb{T}$ is
at most at distance $\varepsilon$ from one
of the points in this net.
Now we exploit the linear structure of the sequence $\beta(k + j) = \beta k + \beta j$. 
The desired statement is rotation invariant, so it is equivalent to show the existence of a
$m_{\beta, \varepsilon} \in \mathbb{N}$ such that 
$$  \left\{  \beta j: 1 \leq j \leq  m_{\beta, \varepsilon}\right\} \qquad
\mbox{is a}~\varepsilon\mbox{-net}~\mbox{on}~\mathbb{T}.$$ 
This is now implied by the fact that a Kronecker sequence with an irrational $\beta$ is uniformly distributed (first established by Hermann Weyl \cite{weyl}) and that uniformly distributed
sequences have the size of the maximal gap tending to 0 (an easy exercise that can be found, for example, in the book of Kuipers \& Niederreiter \cite{kuipers}).
\end{proof}

\subsection{Aligning multiples.} Given a set $\left\{a_1, \dots, a_n\right\}$ of positive real numbers, we may consider their
multiples $(k a_1)_{k=1}^{\infty}$, $(k a_2)_{k=1}^{\infty}, \dots, (k
a_n)_{k=1}^{\infty}$. The main result from this section is that there exists arbitrarily small intervals that contain an element
from each sequence. The statement is a folklore result and a standard application of the Poincar\'{e} recurrence theorem.

\begin{lemma} \label{basicalign} Suppose $\left\{a_1, a_2, \dots, a_n\right\}
\subset \mathbb{R}_{>0}$. Then for all $\varepsilon > 0$, there exists
$\{b_1,b_2,\ldots,b_n\} \subset \mathbb{Z}_{>0}$ and $m \in \mathbb{N}$ such that
$$
\max_{1 \leq i\leq n} \left|a_ib_i - m\right| \le \varepsilon.
$$
\end{lemma}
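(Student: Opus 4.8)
The plan is to reformulate the statement as a recurrence statement for a single rotation on the $n$-dimensional torus and then invoke the Poincar\'{e} recurrence theorem. First I would rewrite the desired conclusion: dividing by $a_i$, the inequality $|a_i b_i - m| \le \varepsilon$ is equivalent to $|b_i - m/a_i| \le \varepsilon/a_i$, so it suffices to find a single positive integer $m$ such that, for every $i$, the number $m/a_i$ lies within $\varepsilon/a_i$ of an integer $b_i$. Writing $\|x\|$ for the distance from $x \in \mathbb{R}$ to the nearest integer, I want $\|m/a_i\| \le \varepsilon/a_i$ for all $i$ simultaneously; setting $\delta := \varepsilon/\max_i a_i$, it is enough to find $m$ with $\|m/a_i\| \le \delta$ for every $i$, and we may assume without loss of generality that $\varepsilon$ is small enough that $\delta < 1/2$.

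Next, set $v := (1/a_1, \dots, 1/a_n)$ and let $T \colon \mathbb{T}^n \to \mathbb{T}^n$, $T(x) = x + v \pmod 1$, be the corresponding translation of the torus $\mathbb{T}^n = (\mathbb{R}/\mathbb{Z})^n$, equipped with the metric $d$ induced by the supremum norm. Then $T^m(0) = (m/a_1, \dots, m/a_n) \pmod 1$, so finding $m$ with $\|m/a_i\| \le \delta$ for all $i$ is precisely the statement that $T^m(0)$ lies within distance $\delta$ of the origin. Since $T$ is a translation it preserves Lebesgue measure, so $(\mathbb{T}^n, T)$ is a measure-preserving system on a probability space and the Poincar\'{e} recurrence theorem applies. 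Applying it to the open ball $B(0, \delta/2)$, which has positive measure, yields a point $x \in B(0, \delta/2)$ and infinitely many $m \ge 1$ with $T^m(x) = x + mv \in B(0, \delta/2)$; by translation invariance of $d$ and the triangle inequality, $\|mv\| = d(x+mv, x) \le d(x+mv, 0) + d(0, x) < \delta$ for each such $m$.

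It remains to produce the required positive integers. Because the recurrence supplies an infinite set of return times $m \ge 1$, I may take $m$ as large as I like; choosing $m > 2 \max_i a_i$ forces $m/a_i > 2$, so the nearest integer $b_i$ to $m/a_i$ satisfies $b_i \ge 1$ (using $\delta < 1/2$), while $m \in \mathbb{N}$ automatically. Unwinding the definitions, $|m/a_i - b_i| = \|m/a_i\| \le \delta = \varepsilon/\max_j a_j$, hence $|a_i b_i - m| \le a_i \delta \le \varepsilon$ for every $i$, which is the claim.

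I expect the only genuinely delicate point to be the passage from the abstract recurrence theorem, which a priori concerns almost every point rather than the distinguished point $0$, to a usable statement about $\|mv\|$. This is handled precisely by exploiting the translation structure of $T$: recurrence of \emph{any} point in the small ball around the origin forces $mv$ itself to be small, so the special role of $0$ is never actually needed. The positivity bookkeeping for $m$ and the $b_i$, together with the rescaling of $\varepsilon$ by $\max_i a_i$, is routine.
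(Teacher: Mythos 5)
Your proof is correct and follows essentially the same route as the paper: both apply the Poincar\'e recurrence theorem to the translation $x \mapsto x + (1/a_1,\dots,1/a_n)$ on $\mathbb{T}^n$ with a small neighborhood of the origin, and both use the translation invariance to transfer recurrence of an arbitrary point of that neighborhood to smallness of $mv$ itself. Your handling of the positivity of the $b_i$ (taking $m$ large so the nearest integers are at least $1$) is slightly more careful than the paper's, but the argument is the same.
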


\begin{proof} We make use of the Poincar\'{e} recurrence theorem: it states that
given a measure space $(X, \mathcal{A}, \mu)$ and a measure-preserving
transformation $T$ from $X$ to itself almost every point from a given set $A \in \mathcal{A}$
with $\mu(A) > 0$ returns to the set $A$ infinitely many times or, formally,
$$ 
\mu \left( \left\{x \in A: \left( \exists k \in \mathbb{N} :  \left( \forall
\ell > k,
~~T^{\ell}(x) \notin A \right ) \right) \right\}\right) = 0,
$$
for every $A \in \mathcal{A}$. We consider the torus $\mathbb{T}^n = X$ equipped
with the Lebesgue measure $\mu$ and consider the measure-preserving
transformation $$ T(x_1, x_2, \dots, x_n) = \left(x_1 + \frac{1}{a_1}, x_2 +
\frac{1}{a_2}, \dots, x_n + \frac{1}{a_n} \right).$$
We apply the Poincar\'{e} recurrence theorem to the set
$$ A = \left\{ x \in \mathbb{T}^n: ~\sup_{1 \leq i \leq n}{|x_i|} \leq
\frac{\varepsilon}{4 \max(a_1, \dots, a_n)} \right\} \in \mathcal{A}.$$
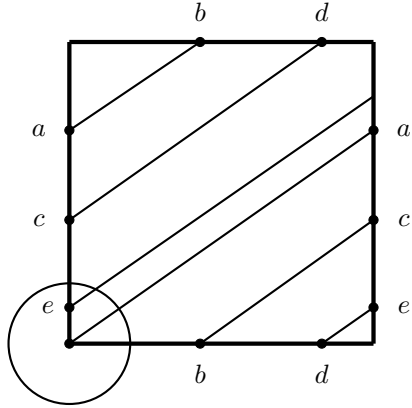
\begin{figure}[h!]
\centering
\begin{tikzpicture}[scale=4]
\draw[ultra thick] (0,0) -- (1,0);
\draw[ultra thick] (1,1) -- (1,0);
\draw[ultra thick] (1,1) -- (0,1);
\draw[ultra thick] (0,1) -- (0,0);
\filldraw (0,0) circle (0.015cm);
\draw [thick] (0,0) circle (0.2cm);
\draw[thick] (0,0) -- (1,1/1.4142);
\draw[thick] (0,1/1.4142) -- (0.43, 1);
\draw[thick] (0.43,0) -- (1, 0.41);
\draw[thick] (0,0.41) -- (0.83, 1);
\draw[thick] (0.83,0) -- (1, 0.12);
\draw[thick] (0, 0.12) -- (1,0.82);
\filldraw (1,1/1.4142) circle (0.015cm);
\node at (1.1,1/1.4142) {$a$};
\filldraw (0,1/1.4142) circle (0.015cm);
\node at (-0.1,1/1.4142) {$a$};
\filldraw (0.43,1) circle (0.015cm);
\node at (0.43, 1.1) {$b$};
\filldraw (0.43,0) circle (0.015cm);
\node at (0.43, -0.1) {$b$};
\filldraw (1,0.41) circle (0.015cm);
\node at (1.1,0.41) {$c$};
\filldraw (0,0.41) circle (0.015cm);
\node at (-0.1,0.41) {$c$};
\filldraw (0.83, 1) circle (0.015cm);
\node at (0.83, 1.1) {$d$};
\filldraw (0.83, 0) circle (0.015cm);
\node at (0.83, -0.1) {$d$};
\filldraw (1,0.12) circle (0.015cm);
\node at (1.1,0.12) {$e$};
\filldraw (0,0.12) circle (0.015cm);
\node at (-0.07,0.12) {$e$};
\end{tikzpicture}
\caption{Every linear flow on the torus eventually returns close to the origin.}
\end{figure}
Since $\mu(A) > 0$, there exists at least one $x_0 \in A$ such that $T(x_0),
T(T(x_0)), \dots$ returns to $A$ infinitely often.  Then, because of the
underlying linearity of $T$, we have that $$ T^{\ell}(0) = T^{\ell}(x_0) - x_0,$$
and thus, with the triangle inequality, for infinitely many $\ell \in
\mathbb{N}$ 
$$ 
T^{\ell}(0) \in \left\{ x \in \mathbb{T}^n: ~\sup_{1 \leq i \leq
n}{|x_i|} \leq \frac{\varepsilon}{2\max(a_1, \dots, a_n)} \right\}.
$$
Put differently, we have for suitable $c_1, c_2, \dots, c_n \in \mathbb{N}$ that
$$ \left| \frac{\ell}{a_i} - c_i\right| \leq  \frac{\varepsilon}{2\max(a_1, \dots, a_n)}.$$
Multiplication with $a_i$ shows that
$$ | \ell - a_i c_i|  \leq \frac{\varepsilon}{2}$$
from which the result follows.
\end{proof}
The argument immediately suggests several ways of how this result could be improved. The worst possible case is when the vector
$(a_1, a_2, \dots, a_n)$ is badly approximable in which case the one-parameter flow $\left\{(a_1 t, a_2 t, \dots, a_n t):  t > 0\right\}$
is effectively exploring the entire Torus and may require a very long time to return to the origin. In contrast, linear dependence,
getting trapped in subspaces or being well approximable by rational numbers,
shorten the return time.

\section{Proof of Theorem \ref{thm1}} \label{sec:proofthm1}
This section is organized as follows.  First, we prove Lemmas \ref{lem:rational}
and \ref{lem:irrational} which provide asymptotic formulas for $N_\beta(\alpha)$
as $\alpha \rightarrow \infty$ for fixed rational and irrational slopes,
respectively. Second, we show that the Minkowski dimension of $\Lambda$ is at
most $3/4$. Third, we use Lemmas \ref{lem:rational} and \ref{lem:irrational} to
complete the proof of Theorem \ref{thm1}.

\subsection{Rational slopes: Lemma \ref{lem:rational}} \label{rationalproof}
If $p$ and $q$ are positive coprime integers, then  Lemma \ref{lem:rational}
states that
\[
N_{p/q}(\alpha) = \frac{\alpha^2}{2} + \frac{-\sqrt{\frac{p}{q}} -
\sqrt{\frac{q}{p}} + \sqrt{\frac{1}{pq}} \left(1-2\{\alpha \sqrt{p q}\} \right)
}{2} \alpha + \mathcal{O}_{p,q}(1).
\]
The interesting behavior of this expression occurs in the coefficient of
$\alpha$. This coefficient is periodic with period $1/\sqrt{pq}$ and
peak-to-peak amplitude $1/\sqrt{pq}$. Furthermore, as $p$ and $q$ tend towards
infinity this coefficient approaches $(-\sqrt{p/q}-\sqrt{q/p})/2$.  This
convergence is suggestive of the behavior of irrational numbers, whose counting
function converges to a quadratic polynomial in $\alpha$ as $\alpha \rightarrow
\infty$.

\begin{proof}[Proof of Lemma \ref{lem:rational}]
Suppose that a triangle with vertices $(0,0)$, $(\alpha \sqrt{q/p},0)$, and
$(0,\alpha \sqrt{p/q})$ is given. If a line of slope $-p/q$ intersects the
$x$-axis at $\alpha \sqrt{q/p}-b$ and the point $(k,m) \in \mathbb{Z}^2$,
then
$$
m = -\frac{p}{q}\left(k - \alpha \sqrt{\frac{q}{p}} + b\right).
$$
Solving for $b$ yields 
$$
b = \frac{\alpha \sqrt{pq} - (qm + kp)}{p}.
$$
The smallest nonnegative value of $b$ that can be written in this form for some
$(k,m) \in \mathbb{Z}^2$ is
$$
b^* = \frac{\alpha \sqrt{pq} - \lfloor \alpha \sqrt{q p} \rfloor}{p}.
$$
Indeed, since $p$ and $q$ are coprime $\{qm + kp: k,m \in \mathbb{Z}\} =
\mathbb{Z}$. Let $T^*$ be the triangle with vertices
$(0,0)$, $(\alpha \sqrt{q/p} -b^*,0)$, and $(0,\alpha \sqrt{p/q} - (p/q)b^*)$,
and $C$ be the convex hull of the nonnegative lattice points enclosed by the
triangle with vertices $(0,0)$, $(\alpha \sqrt{q/p},0)$, and $(0,\alpha
\sqrt{p/q})$.
\begin{figure}[h!]
\centering
\begin{tikzpicture}[scale=.7]
\draw [thick, ->]  (0,0) -- (7.5,0);
\draw [thick, ->]  (0,0) -- (0,9.5);
\draw [ultra thick] (0,0) -- (0,8) -- (1,7) -- (2,6)-- (5,2) -- (6,0) -- (0,0);
\draw [thick] (0,0) -- (0,8.8) -- (6.6,0) -- (0,0);
\draw [thick, dashed] (0,0) -- (0,8.6667) -- (6.5,0) -- (0,0);
\filldraw (0,0) circle (0.09cm);
\filldraw (0,1) circle (0.09cm);
\filldraw (0,2) circle (0.09cm);
\filldraw (0,3) circle (0.09cm);
\filldraw (0,4) circle (0.09cm);
\filldraw (0,5) circle (0.09cm);
\filldraw (0,6) circle (0.09cm);
\filldraw (0,7) circle (0.09cm);
\filldraw (0,8) circle (0.09cm);
\filldraw (0,9) circle (0.09cm);
\filldraw (1,0) circle (0.09cm);
\filldraw (1,1) circle (0.09cm);
\filldraw (1,2) circle (0.09cm);
\filldraw (1,3) circle (0.09cm);
\filldraw (1,4) circle (0.09cm);
\filldraw (1,5) circle (0.09cm);
\filldraw (1,6) circle (0.09cm);
\filldraw (1,7) circle (0.09cm);
\filldraw (1,8) circle (0.09cm);
\filldraw (1,9) circle (0.09cm);
\filldraw (2,0) circle (0.09cm);
\filldraw (2,1) circle (0.09cm);
\filldraw (2,2) circle (0.09cm);
\filldraw (2,3) circle (0.09cm);
\filldraw (2,4) circle (0.09cm);
\filldraw (2,5) circle (0.09cm);
\filldraw (2,6) circle (0.09cm);
\filldraw (2,7) circle (0.09cm);
\filldraw (2,8) circle (0.09cm);
\filldraw (2,9) circle (0.09cm);
\filldraw (3,0) circle (0.09cm);
\filldraw (3,1) circle (0.09cm);
\filldraw (3,2) circle (0.09cm);
\filldraw (3,3) circle (0.09cm);
\filldraw (3,4) circle (0.09cm);
\filldraw (3,5) circle (0.09cm);
\filldraw (3,6) circle (0.09cm);
\filldraw (3,7) circle (0.09cm);
\filldraw (3,8) circle (0.09cm);
\filldraw (3,9) circle (0.09cm);
\filldraw (4,0) circle (0.09cm);
\filldraw (4,1) circle (0.09cm);
\filldraw (4,2) circle (0.09cm);
\filldraw (4,3) circle (0.09cm);
\filldraw (4,4) circle (0.09cm);
\filldraw (4,5) circle (0.09cm);
\filldraw (4,6) circle (0.09cm);
\filldraw (4,7) circle (0.09cm);
\filldraw (4,8) circle (0.09cm);
\filldraw (4,9) circle (0.09cm);
\filldraw (5,0) circle (0.09cm);
\filldraw (5,1) circle (0.09cm);
\filldraw (5,2) circle (0.09cm);
\filldraw (5,3) circle (0.09cm);
\filldraw (5,4) circle (0.09cm);
\filldraw (5,5) circle (0.09cm);
\filldraw (5,6) circle (0.09cm);
\filldraw (5,7) circle (0.09cm);
\filldraw (5,8) circle (0.09cm);
\filldraw (5,9) circle (0.09cm);
\filldraw (6,0) circle (0.09cm);
\filldraw (6,1) circle (0.09cm);
\filldraw (6,2) circle (0.09cm);
\filldraw (6,3) circle (0.09cm);
\filldraw (6,4) circle (0.09cm);
\filldraw (6,5) circle (0.09cm);
\filldraw (6,6) circle (0.09cm);
\filldraw (6,7) circle (0.09cm);
\filldraw (6,8) circle (0.09cm);
\filldraw (6,9) circle (0.09cm);
\filldraw (7,0) circle (0.09cm);
\filldraw (7,1) circle (0.09cm);
\filldraw (7,2) circle (0.09cm);
\filldraw (7,3) circle (0.09cm);
\filldraw (7,4) circle (0.09cm);
\filldraw (7,5) circle (0.09cm);
\filldraw (7,6) circle (0.09cm);
\filldraw (7,7) circle (0.09cm);
\filldraw (7,8) circle (0.09cm);
\filldraw (7,9) circle (0.09cm);
\end{tikzpicture}
\caption{Triangle $T^*$ (dashed) and boundary of convex hull $C$ (bold) for
a given triangle (solid).} \label{fig8}
\end{figure}
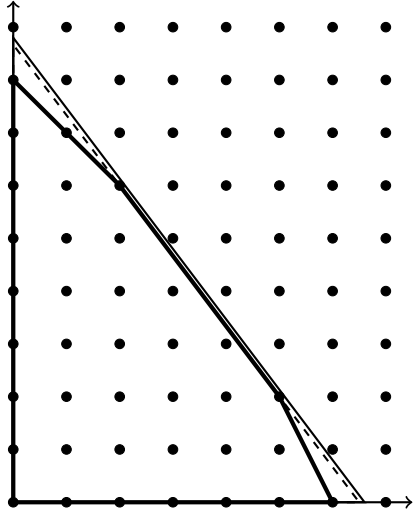

To summarize, $b^* \ge 0$ is the smallest nonnegative number such that the line
$$
y = -\frac{p}{q} \left(x - \alpha \sqrt{\frac{q}{p}} + b^* \right) 
$$
intersects some lattice point, and the segment of this line such that $0
\le x \le \alpha \sqrt{q/p} - b^*$ is the hypotenuse of the triangle $T^*$.
Therefore, the lattice points enclosed by the triangle $T^*$ are exactly those
enclosed by the triangle with vertices $(0,0)$, $(\alpha \sqrt{q/p},0)$, and
$(0,\alpha \sqrt{p/q})$. It follows that the convex hull $C$ is contained in the
triangle $T^*$. We assert
that 
$$
T^* \setminus C \subset \left[0, q \right] \times \left[\alpha
\sqrt{\frac{p}{q}} - p - \frac{p}{q}b^*, \alpha \sqrt{\frac{p}{q}} - \frac{p}{q}
b^* \right] \cup \left[\alpha \sqrt{\frac{q}{p}} - q - b^*, \alpha
\sqrt{\frac{q}{p}} -b^*\right] \times \left[0, p \right].
$$
Indeed, the hypotenuse of $T^*$ intersects lattice points periodically because
it has a rational slope $-p/q$. Therefore, it must intersect exactly one lattice
point with $x$-coordinate $0 \le x < q$ and exactly one lattice point with
$y$-coordinate $0 \le y < p$ (since $p$ and $q$ are coprime). The line segment
between these two points must be contained in the convex hull $C$. We conclude
that the hypotenuse of $T^*$ and $C$ only possibly differ in the above union of
rectangles. Therefore, the area of $T^*$ and $C$ differ by at most
$\mathcal{O}_{p,q}(1)$, and moreover, the boundary of $T^*$ and the boundary of
$C$ contain the same number of lattice points up to error on the order of
$\mathcal{O}_{p,q}(1)$. Next, we use Pick's Theorem to estimate the number of
lattice points in the convex hull $C$. Let $A$ denote the area of $C$, $B$
denote the number of lattice points on the boundary of $C$, and $I$ denote the
number of lattice points in the interior of $C$. By Pick's Theorem
\begin{eqnarray*}
I + B &=& A + \frac{B}{2} + 1  \\
&=& \frac{\left( \alpha \sqrt{\frac{p}{q}} - \frac{p}{q} b^* \right)\left(\alpha
\sqrt{\frac{q}{p}} -
b^* \right)}{2} + 
\frac{ \alpha \sqrt{\frac{p}{q}} + \alpha \sqrt{\frac{q}{p}} +
\alpha\sqrt{\frac{1}{pq}}}{2} +
\mathcal{O}_{p,q}(1).
\end{eqnarray*}
Subtracting the lattice points on the $x$ and $y$ axes yields 
\begin{eqnarray*}
N_{p/q}(\alpha) &=& I + B - \alpha \sqrt{p/q} - \alpha \sqrt{q/p} +
\mathcal{O}_{p,q}(1) \\
&=& \frac{\alpha^2}{2} - b^*\alpha \sqrt{\frac{p}{q}} + \alpha
\frac{-\sqrt{\frac{p}{q}}
-\sqrt{\frac{q}{p}} + \sqrt{\frac{1}{pq}}}{2} + \mathcal{O}_{p,q}(1) \\
&=& \frac{\alpha^2}{2} + \alpha\frac{-\sqrt{\frac{p}{q}} - \sqrt{\frac{p}{q}} +
 \sqrt{\frac{1}{pq}} \left(1-2\{\alpha \sqrt{p q}\} \right)
 }{2} + \mathcal{O}_{p,q}(1).
\end{eqnarray*} 
The final step results from substituting $b^* = \{ \alpha \sqrt{pq} \} /p$, where
$\{\alpha \sqrt{pq}\} = \alpha \sqrt{pq} - \lfloor \alpha \sqrt{pq} \rfloor$
denotes the fractional part of $\alpha \sqrt{pq}$. This completes the proof.  
\end{proof}

\subsection{Irrational slopes: Lemma \ref{lem:irrational}}
To prove Lemma \ref{lem:irrational} we combine the results of Lemmas
\ref{lem:irrationalhull} and \ref{lem:irrationalarea}.
We note that those results are formulated
for triangles in a different configuration for simplicity of exposition; a
reflection and translation makes these results
applicable to triangles discussed in this proof.

\begin{proof}[Proof of Lemma \ref{lem:irrational}] 
Suppose a triangle with vertices $(0,0)$, $(\alpha/\sqrt{\beta},0)$, and
$(0,\alpha \sqrt{\beta})$ is given. Let $C$ denote the convex hull of the
nonnegative lattice points enclosed by this triangle. The number of lattice
points contained in this triangle is equal to the number $I+B$ of
lattice points contained in the convex hull $C$, which by Pick's
Theorem equals
$$
 I + B = A + \frac{B}{2} + 1,
$$
where $I$ is the number of lattice points in the interior of $C$, $B$ is the
number of lattice points on the boundary of $C$, and $A$ is the area of $C$.
Let 
$$
\gamma = \beta \left( \frac{\alpha}{\sqrt{\beta}} - \left\lfloor
\frac{\alpha}{\sqrt{\beta}} \right\rfloor \right)
\quad \text{and} \quad N = \left\lfloor \frac{\alpha}{\sqrt{\beta}} \right\rfloor.
$$
If the convex hull $C$ is reflected about the line $x = N$ and translated $N$
units to the left, then it will be in the configuration of Lemmas
\ref{lem:irrationalhull} and \ref{lem:irrationalarea} with $\gamma$ and $N$ as
specified above. Applying the results of these Lemmas, we conclude that
$$ 
B = (1 + \beta) N + o_\beta(N) = \left( \sqrt{\beta} + \sqrt{\frac{1}{\beta}}
\right) \alpha + o_\beta(\alpha),
$$
and
$$
A = \frac{1}{2} \beta N^2 + \gamma N + o_\beta(N) = \frac{\alpha^2}{2} +
o_\beta(N).
$$
Hence, the number $\# ( C \cap \mathbb{Z}^2_{>0})$ of positive lattice points in the
convex hull $C$ is equal to the number $I+B$ of nonnegative lattice points in
$C$ minus the number $(\sqrt{\beta}+\sqrt{1/\beta})\alpha+\mathcal{O}(1)$ of
lattice points in $C$ that are either on the $x$-axis or $y$-axis 
\begin{eqnarray*}
\# \left( C \cap \mathbb{Z}_{>0}^2 \right) &=& I + B - \left( \sqrt{\beta} +
\sqrt{\frac{1}{\beta}} \right)\alpha + \mathcal{O}(1) \\ 
&=& \alpha^2/2 - \frac{  \sqrt{\beta} + \sqrt{1/\beta}}{2}\alpha +
o_{\beta}(\alpha),
\end{eqnarray*}
which is the desired statement.
\end{proof}

\subsection{Minkowski dimension.}
The discussion following Lemma \ref{lem:irrational} implies that $\Lambda$ only
contains rational numbers. We can use the asymptotic formula in  Lemma
\ref{lem:rational} to show that the rational numbers $p/q$ in $\Lambda$ have to
be increasingly close to 1 as the denominator increases. This allows us to prove
the following Lemma.
\begin{corollary}
If $p/q \in \Lambda$, then $|q-p| \leq 2 \sqrt{q} + 1$. Moreover, $\dim \Lambda
\le 3/4$.
\end{corollary}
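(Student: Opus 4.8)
The plan is to turn membership in $\Lambda$ into a clean arithmetic condition using Lemma~\ref{lem:rational}, and then run a two-scale box-counting argument. First, for coprime $p,q$ divide the asymptotic of Lemma~\ref{lem:rational} by $\alpha$:
$$
\frac{N_{p/q}(\alpha) - \alpha^2/2}{\alpha} = -\frac{\sqrt{p/q} + \sqrt{q/p} - \sqrt{1/(pq)}\bigl(1 - 2\{\alpha\sqrt{pq}\}\bigr)}{2} + \mathcal{O}_{p,q}(1/\alpha).
$$
Since $\alpha$ is a continuous positive parameter and $\sqrt{pq} > 0$, along $\alpha_n = n/\sqrt{pq} \to \infty$ one has $\{\alpha_n\sqrt{pq}\} = 0$, and this value maximizes the bounded periodic main term; hence the $\limsup$ equals $-\tfrac12\bigl(\sqrt{p/q} + \sqrt{q/p} - \sqrt{1/(pq)}\bigr)$. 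The defining inequality $\limsup > -1$ then reads $\sqrt{p/q} + \sqrt{q/p} - \sqrt{1/(pq)} < 2$, and multiplying by $\sqrt{pq}$ rewrites this as $p + q - 1 < 2\sqrt{pq}$, i.e.
$$
\bigl(\sqrt{p} - \sqrt{q}\bigr)^2 < 1 .
$$
Thus $p/q \in \Lambda$ if and only if $|\sqrt p - \sqrt q| < 1$.

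From this characterization the first claim is immediate: $|p - q| = |\sqrt p - \sqrt q|\,(\sqrt p + \sqrt q) < \sqrt p + \sqrt q$, and $|\sqrt p - \sqrt q| < 1$ forces $\sqrt p < \sqrt q + 1$, so $|q - p| < 2\sqrt q + 1$. The same estimate records the clustering that powers the dimension bound: every $p/q \in \Lambda$ satisfies $|p/q - 1| = |p-q|/q < (2\sqrt q + 1)/q$, so elements with large denominator are forced close to $1$.

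For the Minkowski dimension I would fix a scale $\varepsilon > 0$ and a threshold $Q$ (to be optimized), and cover $\Lambda$ in two pieces. For the high-denominator part $q > Q$, the clustering estimate puts every element in an interval of length $\mathcal{O}(1/\sqrt Q)$ about $1$, so it is covered by $\mathcal{O}\bigl(1/(\varepsilon\sqrt Q)\bigr)$ intervals of length $\varepsilon$. For the low-denominator part $q \le Q$, the first claim allows at most $\mathcal{O}(\sqrt q)$ numerators $p$ for each $q$, so there are at most $\sum_{q \le Q}\mathcal{O}(\sqrt q) = \mathcal{O}(Q^{3/2})$ such fractions, each coverable by a single $\varepsilon$-interval. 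Hence the covering number obeys $N(\varepsilon) \lesssim Q^{3/2} + \varepsilon^{-1}Q^{-1/2}$, and the choice $Q = \varepsilon^{-1/2}$ balances the two terms to give $N(\varepsilon) \lesssim \varepsilon^{-3/4}$, whence $\overline{\dim}_B \Lambda = \limsup_{\varepsilon\to 0}\log N(\varepsilon)/\log(1/\varepsilon) \le 3/4$.

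The main obstacle is the first step: one must verify that the $\limsup$ is genuinely attained at $\{\alpha\sqrt{pq}\}=0$ rather than merely approached (which is why it helps that $\alpha$ is continuous), and that the $\mathcal{O}_{p,q}(1)$ error in Lemma~\ref{lem:rational} is harmless after dividing by $\alpha$. Once the equivalence $p/q \in \Lambda \Leftrightarrow (\sqrt p - \sqrt q)^2 < 1$ is established, both the bound $|q-p| \le 2\sqrt q + 1$ and the box-counting optimization are routine.
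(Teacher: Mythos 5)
Your proposal is correct and follows essentially the same route as the paper: both extract the characterization $(\sqrt{p}-\sqrt{q})^2 \le 1$ from Lemma~\ref{lem:rational} (with the extremum at $\{\alpha\sqrt{pq}\}=0$), deduce $|q-p|\le 2\sqrt{q}+1$, and then run the identical two-scale covering, since your choice $Q=\varepsilon^{-1/2}$ reproduces the paper's split into the interval $[1-\varepsilon^{1/4},1+\varepsilon^{1/4}]$ and the finitely many fractions with $q\lesssim \varepsilon^{-1/2}$. Your write-up is in fact slightly more careful about why the $\limsup$ is attained and why the $\mathcal{O}_{p,q}(1)$ error is harmless.
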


\begin{proof}
From Lemma \ref{lem:rational} (rational slopes), we see that $p/q \in \Lambda$ implies
$$ - \sqrt{\frac{p}{q}} - \sqrt{\frac{q}{p}} + \sqrt{\frac{1}{pq}} \geq -2.$$
Multiplying with $\sqrt{pq}$ yields
$$ -p - q+ 1 \geq -2 \sqrt{pq}.$$
This yields a quadratic inequality with equality exactly for $p=q \pm 2\sqrt{q} + 1$.
It now suffices to compute the Minkowski dimension
$$\dim_{} \left\{ \frac{p}{q} \in \mathbb{Q}: q \geq 1 \wedge |p - q| \leq 2\sqrt{q} + 1\right\}.$$
These rational numbers accumulate around 1. It is easy to see that, for every $\varepsilon > 0$,
$$  \left\{ \frac{p}{q} \in \mathbb{Q}_{>0}:  |p - q| \leq 2\sqrt{q} +
1\right\} \subseteq \left[ 1 - \varepsilon^{1/4}, 1 + \varepsilon^{1/4} \right]
\cup  \left\{ \frac{p}{q} \in \mathbb{Q}_{>0}: q \leq
\frac{9}{\sqrt{\varepsilon}} \wedge |p - q| \leq 2\sqrt{q} + 1\right\}.$$
Covering the first set with $\varepsilon$-boxes requires $\sim \varepsilon^{-3/4}$ boxes. As for the second set, we simply put a box around every
element which puts an upper bound on the number of boxes required at
$$ \# \left\{ \frac{p}{q} \in \mathbb{Q}: q \leq \frac{9}{\sqrt{\varepsilon}}
\wedge |p - q| \leq 2\sqrt{q} + 1\right\} \lesssim \sum_{k=1}^{9 \varepsilon^{-1/2}}{ \sqrt{k}} \sim  \left(\varepsilon^{-1/2}\right)^{3/2} \sim \varepsilon^{-3/4}.$$
\end{proof}

\subsection{Proof of Theorem \ref{thm1}}
\begin{proof} We quickly re-iterate why no irrational slope can be optimal for a sequence
of areas tending to infinity. Lemma \ref{lem:irrational} implies that
$$
N_{\beta}(\alpha) = \frac{\alpha^2}{2} - \alpha\frac{\sqrt{\beta} +
\sqrt{\frac{1}{\beta}}}{2} + o_\beta(\alpha)
$$
and, since $1 \in \mathbb{Q}$, we have 
$$ \frac{\sqrt{\beta} +\sqrt{\frac{1}{\beta}}  }{2}  > 1,$$
which means that the asymptotic number of lattice points is eventually dominated by
the rational slope $(n+1)/n$ for $n$ sufficiently large (see the discussion
following Lemma \ref{lem:irrational}). The very same reason,
combined with Lemma \ref{lem:rational}, shows that
the limit set $\Lambda$ can only contain rational slopes with 
$$ -\sqrt{\frac{p}{q}} - \sqrt{\frac{q}{p}} + \sqrt{\frac{1}{pq}} \geq -2.$$
This can be used to show that $\Lambda \subset [1/3,3]$: if $p/q \geq 3$ and
$p/q \in \Lambda$, then
$$ -\sqrt{\frac{p}{q}} - \sqrt{\frac{q}{p}} \leq -2.3 \quad \mbox{and thus} \quad  \sqrt{\frac{1}{pq}} \geq 0.3.$$
This last inequality is only true for finitely many rational numbers that can be explicitly checked by hand. The
case $p/q \leq 1/3$ follows from symmetry considerations.
A similar argument establishes the nontrivial dynamics: suppose it were indeed the case that the finite set of
slopes
$$\Gamma = \left\{ \frac{p_1}{q_1}, \frac{p_2}{q_2}, \dots, \frac{p_n}{q_n} \right\} \subset \Lambda$$
captures more lattice points for all sufficiently large areas then any other triangle whose slope is not in the set.
We know that the counting function $N_{p/q}$ is oscillating periodically around a limit
value and has (relatively) small values in a periodically occuring manner. More precisely, for every $p/q \in \mathbb{Q}$
there exists $\varepsilon_{p,q} > 0$ and $\delta_{p,q} > 0$ such that for all $k \in \mathbb{N}$
$$ \forall \alpha \in \left(\frac{k}{\sqrt{p q}} - \varepsilon_{p,q}, \frac{k}{\sqrt{p q}} \right): \qquad N_{p/q}(\alpha) \leq \alpha^2/2 - (1+\delta_{p,q})\alpha.$$
If the limiting set is finite, then we can obtain a uniform $\delta > 0$ that is
valid for all elements of $\Gamma$ and use Lemma \ref{basicalign} to very nearly
align the location of the minima. Comparing with slope $(n+1)/n$ for $n$
sufficiently large (depending only on $\delta$) then yields a contradiction.
In fact, a stronger alignment result is proved in Lemma \ref{align}, from which
the conclusion that $\Lambda$ is infinite immediately follows -- this will be explained in greater detail and used at the end of the paper.
\end{proof}

\subsection{Proof of the Proposition \ref{prop1} }
\begin{proof}
We want to show $N_1(n) > N_\beta(n)$ for all $\beta \not = 1$ and all $n \in
\mathbb{N}$. Consider the triangle with vertices $(0,0)$, $(x,0)$, and $(0,y)$
satisfying $x y = n^2$ and without loss of generality $y \ge x$ (and thus $y\geq n$). First, observe
that
$$
N_1(n) = \frac{n(n-1)}{2} = \frac{n^2}{2} - \frac{n}{2}.
$$
Consider the convex hull $C$ of the nonnegative lattice points enclosed by the
triangle with vertices $(0,0)$, $(x,0)$, and $(0,y)$. Let $I$ and $B$ denote the
number of lattice points in the interior and on the boundary of $C$,
respectively, and let $A$ denote the area of $C$. By Pick's Theorem
$$
I + B = A + \frac{B}{2} + 1.
$$
The number of points $B$ on the boundary of $C$ can be written
$$
B = \lfloor x \rfloor  + \lfloor y \rfloor + D + 1,
$$
where $D$ denotes the number of (strictly) positive lattice points on the
boundary of $C$. Then
$$
N_{y/x}(n) = I + D = A +  \frac{-\lfloor x \rfloor - \lfloor y \rfloor + D +
1}{2} .
$$
The area $A \le x y /2 = n^2/2$, and $D \le \lfloor x \rfloor$.
Therefore 
$$
N_{y/x}(n) \le \frac{n^2}{2} - \frac{ \lfloor y \rfloor + 1}{2} .
$$
If $\lfloor y \rfloor \ge n + 1$, then the result immediately follows from this
inequality.  Otherwise, if $n < y < n +1$, then the triangle with vertices
$(0,0), (x,0)$ and $(0,y)$ does not intersect the line of slope $-1$, which
intersect the $y$-axis at $n+1$. The number of positive lattice points under
this line is exactly $n^2/2 - n/2$ so we conclude
$$
N_{y/x}(n) \le \frac{n^2}{2} - \frac{n}{2},
$$
which completes the proof.
\end{proof}

\section{Lemmas for the Proof of Theorems \ref{limitset} and \ref{badarea}}
\label{sec:lemmas}

We begin by proving three lemmas that further characterize good slopes, and one
lemma that characterizes the dynamics of integral multiples of radicals. These
lemmas strengthen previous lines of reasoning, and together lead to proofs of Theorem \ref{limitset} and \ref{badarea}. Throughout
this section we use the
notation 
$$
f(x) \lesssim_h g(x) \iff f(x) \le C_h g(x),
$$
for all $x$ where $C_h$ is a constant only depending on $h$.

\subsection{Good slopes have many positive lattice points on their convex hull's
boundary} First, we quantify the notion of a good slope by asking that the
number of lattice points captured by such slopes exceeds $\alpha^2/2 - \alpha$
by a term that is linear in $\alpha$.  Specifically, for any $\gamma > 0$ we say
that $\beta > 0$ is a $\gamma$-good slope at area $\alpha > 0$ provided 
$$ 
N_\beta(\alpha) > \frac{\alpha^2}{2} - \alpha + \frac{\gamma}{2}\alpha.
$$
 Let $C$ denote the convex hull of the nonnegative lattice points
enclosed by the triangle with vertices $(0,0)$, $(\alpha/\sqrt{\beta})$, and
$(0,\alpha \sqrt{\beta})$. To be clear, by points enclosed by the triangle we
mean the set of points in the interior or on the boundary of the triangle. Let 
$$
d_\beta(\alpha) = \frac{\# \{ n \in \partial C \cap \mathbb{Z}_{>0}^2
\}}{\alpha}
$$
denote the number of positive lattice points on the boundary of the convex hull
$C$ divided by $\alpha$.  We will show that if $\beta$ is a $\gamma$-good slope
for an area $\alpha$, then, if $\alpha$ is sufficiently large, $d_\beta(\alpha)
\gtrsim \gamma$ .

\begin{lemma} \label{curve} 
For all $\gamma > \eta > 0$ and all sufficiently large areas (where sufficiently
large depends only on $\gamma, \eta$) we have $$ \beta \text{ is }
\gamma\text{-good at } \alpha \implies
 d_\beta(\alpha) > \eta \quad \text{and} \quad
1/4 \le \beta \le 4.
$$
\end{lemma}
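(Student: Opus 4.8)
The plan is to re-run the Pick's-theorem bookkeeping from the proof of Lemma~\ref{lem:rational}, but to retain the number $D := d_\beta(\alpha)\alpha$ of positive boundary lattice points as a free quantity instead of evaluating it. Writing $a = \lfloor \alpha/\sqrt{\beta}\rfloor$ and $b = \lfloor \alpha\sqrt{\beta}\rfloor$ for the number of lattice points the hull meets on the $x$- and $y$-axes, the hull boundary carries $B = a + b + D + 1$ lattice points, and removing the $a+b+1$ axis points from the Pick count $I + B = A + B/2 + 1$ gives the exact identity
\[
N_\beta(\alpha) = A - \frac{a+b}{2} + \frac{D}{2} + \frac{1}{2},
\]
where $A$ is the area of the hull $C$. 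Since $C$ lies inside the triangle, $A \le \alpha^2/2$, and since $a + b = (\sqrt{\beta} + \sqrt{1/\beta})\alpha + O(1)$ with an absolute implied constant, I obtain
\[
N_\beta(\alpha) \le \frac{\alpha^2}{2} - \frac{\sqrt{\beta} + \sqrt{1/\beta}}{2}\alpha + \frac{d_\beta(\alpha)}{2}\alpha + O(1).
\]

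For the first conclusion I would feed the $\gamma$-good hypothesis $N_\beta(\alpha) > \alpha^2/2 - \alpha + (\gamma/2)\alpha$ into this inequality, cancel $\alpha^2/2$, divide by $\alpha$, and invoke the arithmetic-mean--geometric-mean bound $\sqrt{\beta} + \sqrt{1/\beta} \ge 2$. This yields $d_\beta(\alpha) > \gamma - 2 + (\sqrt{\beta} + \sqrt{1/\beta}) - O(1/\alpha) \ge \gamma - O(1/\alpha)$, so as soon as $\alpha$ is large enough that the absolute error term drops below $\gamma - \eta$ (a threshold depending only on $\gamma$ and $\eta$), we get $d_\beta(\alpha) > \eta$.

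The bound $1/4 \le \beta \le 4$ needs a matching \emph{upper} bound on $d_\beta(\alpha)$. The key geometric observation is that the hypotenuse part of $\partial C$ joining $(0,b)$ to $(a,0)$ is a convex lattice chain all of whose edge slopes are strictly negative, so it is the graph of a strictly decreasing concave function of $x$; hence distinct boundary points have distinct $x$-coordinates, and symmetrically distinct $y$-coordinates. Consequently $D \le \min(a,b)$, and for $\beta \ge 1$ this reads $d_\beta(\alpha) \le 1/\sqrt{\beta}$. Combining with the lower bound from the previous paragraph gives $1/\sqrt{\beta} > \gamma - 2 + \sqrt{\beta} + 1/\sqrt{\beta} - O(1/\alpha)$, i.e.\ $\sqrt{\beta} < 2 - \gamma + O(1/\alpha) < 2$ once $\alpha$ is large, so $\beta < 4$. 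The case $\beta \le 1$ then follows from the reflection symmetry $N_\beta(\alpha) = N_{1/\beta}(\alpha)$ (reflecting the triangle across $y = x$ preserves positive lattice points), which forces $1/\beta < 4$, i.e.\ $\beta > 1/4$.

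The main obstacle, and the only step beyond routine Pick's-theorem bookkeeping, is justifying $D \le \min(a,b)$: everything hinges on the claim that the outer boundary chain carries at most one lattice point per column and per row. I would make this precise by noting that the chain can have no vertical edge (such an edge would lie on the $y$-axis, which belongs to a different side of the hull) and no horizontal edge (which would lie on the $x$-axis), so it is genuinely a function graph. The only remaining care is in confirming that all error terms are $O(1)$ with absolute constants, so that ``sufficiently large'' depends only on $\gamma$ and $\eta$ as claimed.
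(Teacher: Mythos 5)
Your treatment of the first conclusion is correct and is essentially the paper's own argument: the exact Pick identity $N_\beta(\alpha) = A - \tfrac{a+b}{2} + \tfrac{D}{2} + \tfrac12$, the bounds $A \le \alpha^2/2$ and $\sqrt{\beta}+\sqrt{1/\beta}\ge 2$, and an absolute $O(1)$ error term give $d_\beta(\alpha) > \gamma - O(1/\alpha) > \eta$ once $\alpha \gtrsim (\gamma-\eta)^{-1}$.

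The gap is in the step $D \le \min(a,b)$. Your justification --- that the chain from $(a,0)$ to $(0,b)$ can have no vertical edge because such an edge would lie on the $y$-axis --- is false: a convex polygon can have a vertical edge at its \emph{maximal} $x$-coordinate as well, i.e.\ at $x=a$, and that edge belongs to the ``curved'' part of $\partial C$, not to the axis part. Concretely, take $\beta = 4$, $\alpha = 5$, i.e.\ the triangle with vertices $(0,0)$, $(5/2,0)$, $(0,10)$. The hull of its nonnegative lattice points is the quadrilateral with vertices $(0,0)$, $(2,0)$, $(2,2)$, $(0,10)$; the edge from $(2,0)$ to $(2,2)$ is vertical at $x=a=2$ and carries the positive boundary points $(2,1)$ and $(2,2)$, while $(1,6)$ lies on the edge from $(2,2)$ to $(0,10)$, so $D = 3 > 2 = \min(a,b)$ and $d_\beta(\alpha) = 3/5 > 1/2 = 1/\sqrt{\beta}$. (Symmetrically, for $\beta<1$ a horizontal edge can occur at $y=b$.) This is not a removable $O(1)$ defect: the vertical edge at $x=a$ has height $\beta\{\alpha/\sqrt{\beta}\}$, and for steep triangles (say $\alpha/\sqrt{\beta}$ slightly below $2$) it carries on the order of $\beta \sim \alpha^2$ positive lattice points, so $d_\beta(\alpha)$ can grow linearly in $\alpha$; there is no upper bound on $d_\beta$ that is uniform in $\beta$, and uniformity in $\beta$ is exactly what this half of the lemma needs, since the whole point is to rule out large $\beta$. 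The conclusion $1/4\le\beta\le 4$ is still recoverable --- for instance by bounding $N_\beta(\alpha)$ directly by the column count $\sum_{k=1}^{a}(\alpha\sqrt{\beta}-k\beta)$ in the steep regime and using your inequality only when the vertical edge is short --- but that extra case analysis is missing, and the specific claims ``no vertical edge'' and ``$d_\beta(\alpha)\le 1/\sqrt{\beta}$ for $\beta\ge1$'' are wrong as stated. (The paper's own proof of this half is admittedly terse, asserting only that ``the linear term'' rules out $\beta>4$; but your proposal makes the needed auxiliary claim explicit, and the explicit claim fails.)
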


\begin{proof}
Suppose $\beta > 0$ is given. Let $C$ denote the convex hull of the nonnegative
lattice points enclosed by the triangle with vertices $(0,0),$
$(\alpha/\sqrt{\beta},0)$, and $(0,\alpha \sqrt{\beta})$. We have
$$
N_\beta(\alpha) = I + D,
$$
where $I$ is the number of lattice points in the interior of $C$, and $D$ is the
number of (strictly) positive lattice points on the boundary of $C$. If $X$ and
$Y$ denote the number of lattice points on the boundary of $C$ and on the
$x$-axis and $y$-axis, respectively, then Pick's Theorem yields the alternative
representation 
$$
N_\beta(\alpha) = A + \frac{-X - Y + D}{2} + \mathcal{O}(1),
$$
where $A$ is the area of $C$. Bounding $A \le \alpha^2/2$ and
substituting explicit expressions for $X$, $Y$, and $D$ into the above equation
gives
$$ 
N_{\beta}(\alpha) \leq \frac{\alpha^2}{2} + \alpha \frac{-\sqrt{\beta} -
\sqrt{\frac{1}{\beta}} + d_\beta(\alpha)}{2} + \mathcal{O}(1).
$$
Choose $M_{\gamma,\eta} > 0$ such that $(\gamma-\eta) M_{\gamma,\eta}/ 2$ is
greater than the implicit constant in the above expression. Then for all $\alpha
> M_{\gamma,\eta}$
$$
N_{\beta}(\alpha) > \frac{\alpha^2}{2} - \alpha  + \frac{\gamma}{2} \alpha
\implies \frac{-\sqrt{\beta} - \sqrt{\frac{1}{\beta}}}{2} +
\frac{d_\beta(\alpha)}{2} > -1 + \frac{\eta}{2}.
$$
Since for all $\beta > 0$, $\sqrt{\beta} + \sqrt{1/\beta} \ge 2$
we conclude
$$
d_\beta(\alpha) > \eta.
$$
The statement $1/4 \leq \beta \leq 4$ follows as above: if $\beta > 4$, then already the linear term shows that the result cannot hold. (This result
could be improved to $1/3 \leq \beta \leq 3$ as indicated above but this is not necessary here, any explicit bound suffices for our purposes).
\end{proof}

\subsection{Good slopes are close to rational slopes} We now establish that
any $\gamma$-good slope $\beta$ must be close to a rational number $p/q$ with
denominator $q \lesssim 1/\gamma$.

\begin{lemma} \label{closetorational}
For all $\gamma > \eta > 0$ and all sufficiently large areas (depending on
$\gamma, \eta$) we have that for all $\beta \ge 1$
$$ 
\text{$\beta$ is $\gamma$-good} \quad \implies \quad 
\exists \frac{p}{q}\in \mathbb{Q}
\cap \left[1, 4\right] \quad \text{such that} \quad \left|\beta -
\frac{p}{q} \right| \lesssim_{\gamma,\eta} \frac{1}{\alpha} 
\quad \text{and} \quad q < \frac{1}{\eta}.
$$
\end{lemma}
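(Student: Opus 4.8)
The plan is to run the Pick's-theorem bookkeeping of Lemma \ref{curve} in reverse: being $\gamma$-good forces $\gtrsim \alpha$ positive lattice points onto the non-axis part of $\partial C$, while that boundary has total horizontal width at most $\alpha/\sqrt{\beta} \le \alpha$ (this is the only place the hypothesis $\beta \ge 1$ is used). Decomposing $\partial C$ into its maximal collinear edges, I want to produce a single edge that is simultaneously \emph{long} (horizontal length $\gtrsim_{\gamma,\eta} \alpha$) and of \emph{small denominator} ($q < 1/\eta$); the extreme closeness of its slope to $\beta$ is then automatic from a soft geometric estimate.

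That estimate is the engine and I would establish it first, in the spirit of Lemma \ref{lem:irrationalhull}. Every lattice point on the non-axis boundary lies at vertical distance strictly less than $1$ below the hypotenuse line $\ell(x) = \alpha\sqrt{\beta} - \beta x$: otherwise the lattice point directly above it would also be enclosed, contradicting that it lies on the upper hull. Hence, for a maximal edge of slope $-p/q$ in lowest terms and horizontal length $L$, the vertical gap between $\ell$ and the line carrying the edge is linear with slope $p/q - \beta$ and takes values in $[0,1)$ at both endpoints, so $|p/q - \beta|\,L < 1$, that is $|p/q - \beta| < 1/L$. Since every maximal edge has $L \ge q \ge 1$, this already forces every edge slope into $(\beta - 1, \beta + 1) \subseteq (0,5)$.

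Now the counting. Write the maximal edges with horizontal lengths $L_i = j_i q_i$ (so that each carries $j_i + 1$ lattice points and $\sum_i L_i \le \alpha$). Applying Lemma \ref{curve} with the auxiliary value $\eta' := (\gamma+\eta)/2 \in (\eta,\gamma)$ gives $\sum_i j_i \ge \eta' \alpha - \mathcal{O}(1)$. The edges with $q_i \ge 1/\eta$ contribute at most $\sum_i L_i/q_i \le \eta \sum_i L_i \le \eta\alpha$ to this sum, so the edges with $q_i < 1/\eta$ carry at least $(\eta'-\eta)\alpha - \mathcal{O}(1) = \tfrac{\gamma-\eta}{2}\alpha - \mathcal{O}(1)$ lattice points. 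Their slopes lie in $(0,5)$ with denominator below $1/\eta$, so there are at most $K_\eta \lesssim \eta^{-2}$ distinct such slopes; by pigeonhole one small-denominator maximal edge carries $\gtrsim_{\gamma,\eta} \alpha$ lattice points. For this edge $L = jq \ge j \gtrsim_{\gamma,\eta} \alpha$ and $q < 1/\eta$, whence $|p/q - \beta| < 1/L \lesssim_{\gamma,\eta} 1/\alpha$; and since $\beta \in [1,4]$ by Lemma \ref{curve} and $p/q$ is within $\mathcal{O}(1/\alpha)$ of $\beta$, its slope lies in $[1,4]$ for all sufficiently large $\alpha$ (with a trivial adjustment at the endpoints of the interval, replacing $p/q$ by $1/1$ in the degenerate case $\beta = 1$).

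The hard part is the bookkeeping in the third paragraph: one must open a genuine gap between the point count ($\gtrsim \alpha$) and the total width ($\lesssim \alpha$) so that the small-denominator edges inherit a surplus that survives the pigeonhole, and this is precisely why the hypothesis is $\gamma > \eta$ rather than $\gamma = \eta$ — it is what supplies the auxiliary $\eta' \in (\eta,\gamma)$. By contrast, the geometric estimate $|p/q - \beta| < 1/L$ should be entirely routine once phrased through the vertical-gap comparison above.
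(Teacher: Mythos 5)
Your argument is correct and is essentially the paper's own proof: Lemma \ref{curve} applied with an auxiliary parameter strictly between $\eta$ and $\gamma$, decomposition of the curved part of $\partial C$ into maximal rational edges, a Markov-type count showing that the edges of denominator less than $1/\eta$ must carry a positive ($\gamma,\eta$-dependent) fraction of the boundary lattice points, a pigeonhole over the $\lesssim \eta^{-2}$ admissible slopes producing one edge of horizontal length $\gtrsim_{\gamma,\eta}\alpha$, and the observation that such a long edge of slope $-p/q$ must track the hypotenuse to within a bounded vertical gap, forcing $\left|\beta - \frac{p}{q}\right| \lesssim_{\gamma,\eta} \frac{1}{\alpha}$. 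The one imprecision is the claim that the vertical gap lies in $[0,1)$ at \emph{both} endpoints of \emph{every} maximal edge: at the edge terminating on the $x$-axis (at $(\lfloor \alpha/\sqrt{\beta}\rfloor,0)$, or at the foot of a possible short vertical face there) the gap is only bounded by $\beta \le 4$, so the estimate should read $|p/q-\beta| < C/L$ with an absolute constant $C$ rather than $1/L$ --- this changes nothing downstream, and your a priori slope bound together with the end-of-proof containment argument is in fact a mild streamlining of the paper's separate assertion that at least half of the small-denominator mass has slope in $[1,4]$.
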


\begin{proof} We quickly summarize the idea behind the proof before giving technical
details: if the curved part of the boundary of the convex hull has many points, then many
of the slopes that arise have to be rational with a small denominator; a pigeonhole argument
shows that one of these has to occur for a long stretch: the true slope $\beta$
must closely match the rational number over that long stretch, otherwise the convex hull
would look differently.
Let $\gamma > \eta > 0$ be given, and suppose that $\beta > 0$ is $\gamma$-good
for area $\alpha >0$. Since we have assumed $\beta \ge 1$, applying Lemma
\ref{curve} for $\eta^\prime = \sqrt{\eta \gamma} > 0$ gives
$$
d_\beta(\alpha) > \eta^\prime \qquad \mbox{and} \qquad 1 \le \beta \le 4.
$$
Let $C$ denote the convex hull of the nonnegative lattice points enclosed by the
triangle with vertices $(0,0)$, $(\alpha/\sqrt{\beta},0)$, and $(0,\alpha
\sqrt{\beta})$. We call $\overline{\partial C \cap \mathbb{R}^2_{>0}}$ the
`curved' part of the boundary of the convex hull $C$. Let $S_i$ denote the line
segment from $(x_i,y_i)$ to $(x_i+q_i n_i,y_i-p_i n_i)$ where
$x_i,y_i,q_i,p_i,n_i \in \mathbb{Z}_{\ge 0}$ and $p_i$ and $q_i$ are coprime.
Formally,
$$
S_i = \left\{ (x,y) : y = -\frac{p_i}{q_i} (x - x_i) + y_i \quad \text{for}
\quad x_i \le x \le q_i n_i + x_i \right\}.
$$
The curved part of the convex hull can be expressed as a
union of such line segments
$$
\overline{\partial C \cap \mathbb{R}^2_{>0}}  = \bigcup_{i=1}^m \left\{ (x,y) :
y = -\frac{p_i}{q_i} (x - x_i) + y_i \quad \text{for} \quad x_i \le x \le q_i
n_i + x_i \right\},
$$
where the sequences $(x_i)_{i=1}^m$ and $(p_i/q_i)_{i=1}^m$ are strictly
increasing. The reason that we may assume that $p_i/q_i$ is strictly increasing
with $x_i$ is that a decrease in $p_i/q_i$ would violate the convexity of $C$,
and adjoining segments of equal slope can be grouped into a single segment. Note
that since we assumed $p_i$ and $q_i$ are coprime, each line segment can be
decomposed into $n_i$ smaller segments which intersect lattice points at their
endpoints, but not in their interiors. With this notation,
$$
\sum_{i=1}^m n_i= \alpha d_\beta(\alpha) + 1 \geq \eta^\prime \alpha.
$$
We have assumed $\beta \ge 1$ as a hypothesis to the Lemma (However, the Lemma
applies equally well to slopes less than $1$ by flipping the entire triangle
around the $y=x$ diagonal and considering slopes $q/p$ and $1/\beta$ instead).
The assumption $\beta \ge 1$ implies that the length of the side of the triangle
on the $x$-axis is at most $\alpha$ and 
$$
\alpha \ge \sum_{i=1}^n q_i n_i .
$$
Multiplying by $\eta^\prime$ and applying Markov's inequality for a parameter
$\lambda > 0$ gives
$$
\eta^\prime \alpha \ge \eta^\prime \sum_{i=1}^m  q_i n_i \ge \eta^\prime \lambda \sum_{q_i \ge \lambda} n_i.
$$
Multiplying by $-1/(\eta^\prime \lambda)$, adding $\sum_{i=1}^m
n_i$, and using $\sum_{i=1}^m n_i \ge \eta^\prime \alpha$ gives
$$
\sum_{q_i < \lambda} n_i \ge \sum_{i=1}^m n_i - \frac{ \alpha}{\lambda} \ge
\left(\eta^\prime -\frac{1}{\lambda} \right) \alpha.
$$
Setting $\lambda = \gamma/(\eta^\prime)^2$ yields
$$
\sum_{q_i < \gamma/(\eta^\prime)^2} n_i \ge \frac{\gamma - \eta^\prime}{\gamma}
\eta^\prime \alpha.  
$$
Substituting $\eta^\prime = \sqrt{\gamma \eta}$ and using the fact that $q_i \ge
1$ gives
$$
\sum_{q_i < 1/\eta} q_i n_i \ge \sum_{q_i < 1/\eta} n_i \ge c \alpha \quad
\text{where} \quad c = \quad \frac{\gamma - \sqrt{\gamma \eta}}{\gamma}
\sqrt{\gamma \eta}.
$$
We assert that
$$
\sum_{q_i < 1/\eta \, :\, 1 \le p_i/q_i \le 4} q_i n_i \ge \frac{c \alpha}{2}.
$$
Indeed, otherwise either
$$
\sum_{q_i < 1/\eta\, :\, p_i /q_i \le 1 - \eta} q_i n_i \ge \frac{c \alpha}{4}
\quad \text{or} \quad
\sum_{q_i < 1/\eta \, :\, p_i /q_i \ge 4 + \eta} q_i n_i \ge \frac{c \alpha}{4}.
$$
Since $1 \le \beta \le 4$, either case would imply that a part of the convex
hull of length greater than $c \alpha/4$ consists of slopes that are either all
less than $\beta$ by $\eta$ or all greater than $\beta$ by $\eta$. In either
case, when $\alpha$ is sufficiently large (depending on $\gamma, \eta$), this
leads to a contradiction because these parts of the convex hull would deviate
from the line of slope $\beta$ by more than $1$.  Thus, informally speaking, we
have that at least a constant proportion (determined by $\gamma, \eta$) of the
curved part of the boundary of the convex hull $C$ consists of segments with
rational slopes contained in $[1,4]$ with denominators less than $1/\eta$. The
number of such slopes is
$$
\# \left\{ \frac{p_i}{q_i} \in \mathbb{Q} \cap \left[1,4 \right] : q <
1/\eta \right\} \le 4/\eta^2.
$$
Therefore, by the pigeonhole principle there exists a slope $p_i/q_i$ such that
$$
\frac{p_i}{q_i} \in \mathbb{Q} \cap \left[ 1,4 \right] \quad
\text{such that} \quad q_i < \frac{1}{\eta} \quad \text{and} \quad q_i n_i
\ge 
\frac{c \eta^2}{8} \alpha.  
$$
We emphasize that since slopes of line segments on the boundary of the convex
hull are monotone, these $n_i$ segments of length $q_i$ are next to each other,
which means that the convex hull has a very long line segment of a fixed
rational slope in $[1,4]$ whose denominator is less than $q_i$. The length
of the projection of this line segment on the $x$-axis is greater than $(c
\eta^2/8) \alpha$. The difference in the height change of
the line of slope $-\beta$ and this line segment of slope $-p_i/q_i$ must be
less than $1$
$$
\left| (\beta  - p_i/q_i) \frac{c \eta^2}{8} \alpha \right| \le 1.
$$
Moving the term $(c \eta^2/8) \alpha$ to the right hand side yields the result,
as recall that the constant $c > 0$ depends only on $\eta$ and $\gamma$.
\end{proof}

\subsection{Slopes near poorly performing rational slopes cannot perform well}

In the following we show that if a rational slope performs poorly at a specific
area, then any nearby slope cannot perform particuarlly well at the same area.
that any nearby slope associated to the same area cannot preform particularly
well. This result may be regarded as a type of stability statement. Recall that
by Lemma \ref{lem:rational}
$$
N_{p/q}(\alpha) = \frac{\alpha^2}{2} + 
\frac{-\sqrt{\frac{p}{q}}-\sqrt{\frac{q}{p}}+\sqrt{\frac{1}{p q}}(1-2
\{\alpha \sqrt{p q} \})}{2} \alpha + \mathcal{O}_{p,q}(1).
$$
For large $\alpha$ (depending on $p$ and $q$) the performance of a rational
slope is worst when 
$$
\{\alpha \sqrt{pq} \} \in (1-\varepsilon,1),
$$
for some small $\varepsilon > 0$. We show that at such $\alpha$ any slope
$\beta$ close to $p/q$ cannot perform very well.

\begin{lemma} \label{mimic}
Suppose $p$ and $q$ are coprime positive integers, and $c > 0$ is fixed. Then,
for all $\varepsilon > 0$,
$$
\{\alpha \sqrt{pq} \} \in (1-\varepsilon,1) \quad \text{and} \quad \left|\beta -
\frac{p}{q} \right| < \frac{c}{\alpha} \implies N_{\beta}(\alpha)  \le
\frac{\alpha^2}{2} - \alpha + \mathcal{O}_{p,q}( 1 + \varepsilon \alpha),
$$
and furthermore,
$$
\{\alpha \sqrt{pq} \}  = 0 \quad \text{and} \quad 0 < \left|\beta - \frac{p}{q}
\right| < \frac{c}{\alpha} \implies N_{\beta}(\alpha)  \le \frac{\alpha^2}{2} -
\alpha + \mathcal{O}_{p,q}( 1 ).
$$

\end{lemma}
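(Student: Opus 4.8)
The plan is to bypass any direct comparison between $N_\beta(\alpha)$ and $N_{p/q}(\alpha)$ (these differ by order $\alpha$ and so cannot be compared term by term) and instead reduce everything to a single lower bound for a sum of fractional parts. Writing $L_\beta(k) = \alpha\sqrt\beta - \beta k$ and $K = \lfloor \alpha/\sqrt\beta\rfloor$, one has the exact identity
\[
N_\beta(\alpha) = \sum_{k=1}^{K} \lfloor L_\beta(k)\rfloor = \sum_{k=1}^K L_\beta(k) - \sum_{k=1}^K \{L_\beta(k)\}.
\]
Summing the linear term (Euler--Maclaurin, or the elementary computation underlying Lemma \ref{lem:rational}) gives $\sum_{k=1}^K L_\beta(k) = \frac{\alpha^2}{2} - \frac{\sqrt\beta}{2}\alpha + \mathcal{O}_{p,q}(1)$, the error being $\mathcal{O}_{p,q}(1)$ because $\beta$ stays within $c/\alpha$ of the fixed number $p/q$. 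Hence
\[
N_\beta(\alpha) = \frac{\alpha^2}{2} - \frac{\sqrt\beta}{2}\alpha - \Sigma_\beta + \mathcal{O}_{p,q}(1), \qquad \Sigma_\beta := \sum_{k=1}^K \{L_\beta(k)\},
\]
and since $\sqrt\beta = \sqrt{p/q} + \mathcal{O}_{p,q}(1/\alpha)$, the claimed estimate follows from the lower bound $\Sigma_\beta \ge \frac{\alpha}{2}\sqrt{q/p} - \mathcal{O}_{p,q}(\varepsilon\alpha + 1)$ together with $\sqrt{p/q}+\sqrt{q/p}\ge 2$.

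To produce this lower bound I would group the $K$ columns into $P \approx \alpha/\sqrt{pq}$ consecutive blocks of length $q$. Setting $s = \{\alpha\sqrt{pq}\}$ and $h(k) = L_\beta(k) - L_{p/q}(k) = \alpha(\sqrt\beta-\sqrt{p/q}) - (\beta - p/q)k$, the coprimality of $p,q$ implies that on each block the values $\{L_{p/q}(k)\}$ form the equally spaced grid $\{(i+s)/q : 0\le i < q\}$. The function $h$ is linear with total variation $\mathcal{O}_{p,q}(1)$ over all columns, hence constant up to $\mathcal{O}_{p,q}(q/\alpha)$ on a single block; using the elementary identity $\sum_{i=0}^{q-1}\{(i+s)/q + h\} = \frac{q-1}{2} + \{qh+s\}$ one obtains, for each block with representative value $h_b$,
\[
\sum_{k \in \text{block } b} \{L_\beta(k)\} = \frac{q-1}{2} + \{q h_b + s\} + \mathcal{O}_{p,q}(q^2/\alpha),
\]
and summing over blocks, $\Sigma_\beta = P\frac{q-1}{2} + \sum_{b=1}^{P}\{q h_b + s\} + \mathcal{O}_{p,q}(1)$.

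It remains to bound $\sum_b \{qh_b+s\}$ below by roughly $P/2$. Two observations do this. First, because $s > 1-\varepsilon$ we may write $\{qh_b + s\} = \{qh_b - (1-s)\} \ge \{qh_b\} - \varepsilon$. Second, $h$ is linear and, by a direct computation, vanishes up to $\mathcal{O}_{p,q}(1/\alpha)$ at the midpoint $k = K/2$ of the column range; hence the quantities $qh_b$ sweep an interval symmetric about $0$, and pairing each block $b$ with its mirror block $b'$ (for which $qh_{b'} = -qh_b + \mathcal{O}_{p,q}(q/\alpha)$) gives $\{qh_b\} + \{qh_{b'}\} = 1 + \mathcal{O}_{p,q}(q/\alpha)$ except for the $\mathcal{O}_{p,q}(1)$ pairs at which $qh_b$ lies within $\mathcal{O}_{p,q}(q/\alpha)$ of an integer. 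Summing, $\sum_b \{qh_b\} \ge \frac{P}{2} - \mathcal{O}_{p,q}(1)$, whence $\Sigma_\beta \ge P\frac{q-1}{2} + \frac{P}{2} - P\varepsilon - \mathcal{O}_{p,q}(1) = \frac{\alpha}{2}\sqrt{q/p} - \mathcal{O}_{p,q}(\varepsilon\alpha + 1)$. Feeding this into the displayed formula for $N_\beta(\alpha)$ and applying the arithmetic--geometric mean inequality yields the first assertion; the second is the specialization $\varepsilon = 0$, where $s = 0$, the term $\{qh_b+s\}=\{qh_b\}$ appears with no loss, and the error collapses to $\mathcal{O}_{p,q}(1)$.

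The main obstacle is precisely the lower bound on $\sum_b\{qh_b\}$: it is genuinely an equidistribution/cancellation statement, and the only reason it holds with a clean constant is the exact antisymmetry of $h$ about the midpoint of the base, which forces the perturbations $qh_b$ to be distributed symmetrically around $0$. Everything else is bookkeeping, but that bookkeeping must be handled with care: the naive per-column errors are of size $\alpha$, and the argument works only because they organize into $\mathcal{O}_{p,q}(1)$-per-block and $\mathcal{O}_{p,q}(1)$-per-mirror-pair contributions, leaving $P\varepsilon = \mathcal{O}_{p,q}(\varepsilon\alpha)$ as the sole term that does not collapse to a constant.
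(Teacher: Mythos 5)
Your proposal is correct in substance but takes a genuinely different route from the paper. The paper argues geometrically: it draws the family $\ell_j$ of all lines of slope $-p/q$ through lattice points, observes that the hypothesis $\{\alpha\sqrt{pq}\}\in(1-\varepsilon,1)$ places the rational hypotenuse just below $\ell_0$, and then shows that tilting to slope $-\beta$ gains at most half of the $\alpha/\sqrt{pq}$ points on $\ell_0$ while the points gained on $\ell_j$ are cancelled by those lost on $\ell_{-j}$ up to $\mathcal{O}_{p,q}(1+\varepsilon\alpha)$, by explicitly computing the intersection abscissas $x_j$ and $x_{-j}$. Your column-count identity $N_\beta(\alpha)=\sum_k L_\beta(k)-\sum_k\{L_\beta(k)\}$, the block decomposition via coprimality, the three-term identity $\sum_{i=0}^{q-1}\{(i+s)/q+h\}=\tfrac{q-1}{2}+\{qh+s\}$, and above all the observation that $h(K/2)=\alpha(\sqrt{\beta}-\sqrt{p/q})^2/(2\sqrt{\beta})+\mathcal{O}(1/\alpha)=\mathcal{O}_{p,q}(1/\alpha)$ together constitute the analytic shadow of the same cancellation: your pairing of mirror blocks is exactly the paper's $\ell_j$ versus $\ell_{-j}$ cancellation, and the $P/2$ you extract from $\sum_b\{qh_b\}$ is the paper's ``at most half the points on $\ell_0$.'' What your route buys is self-containedness (no appeal to Pick's theorem or to Lemma \ref{lem:rational} as a black box) at the price of heavier fractional-part bookkeeping; the paper's route is shorter because it imports Lemma \ref{lem:rational} and only has to control the net change under tilting.

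One point in your bookkeeping needs sharpening. You declare a pair $(b,b')$ exceptional when $qh_b$ lies within $\mathcal{O}_{p,q}(q/\alpha)$ of an integer and assert there are $\mathcal{O}_{p,q}(1)$ such pairs. As stated this fails when $|\beta-p/q|\ll 1/\alpha$: there the entire range of $qh$ has length $\mathcal{O}_{p,q}(\alpha|\beta-p/q|)\ll 1$ and sits within $\mathcal{O}(q/\alpha)$ of the integer $0$, so \emph{every} pair would be exceptional by your criterion. The correct criterion is that a pair is genuinely bad only when $qh_b$ lies within the \emph{pairing defect} $|q(h_b+h_{b'})|=\mathcal{O}_{p,q}(|\beta-p/q|)$ of an integer; since the values $qh_b$ are equally spaced with spacing $q^2|\beta-p/q|$, the defect-to-spacing ratio is $\mathcal{O}_{p,q}(1)$ uniformly in $\beta$, and the number of bad pairs per integer in the range really is $\mathcal{O}_{p,q}(1)$. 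This is also where the hypothesis $\beta\neq p/q$ in the second assertion enters: at $\beta=p/q$ the spacing degenerates to zero, all $qh_b$ sit exactly at the integer $0$, and the lower bound $\sum_b\{qh_b\}\ge P/2-\mathcal{O}_{p,q}(1)$ rightly fails.
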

\begin{proof}

Suppose $\{ \alpha \sqrt{p q} \} \in (1-\varepsilon,1)$ where $\varepsilon > 0$.
By Lemma \ref{lem:rational}, the function $N_{p/q}(\alpha)$ satisfies
$$
 N_{p/q}(\alpha) = \frac{\alpha^2}{2} + \frac{ -\sqrt{\frac{p}{q}} -
\sqrt{\frac{q}{p}} -  \sqrt{\frac{1}{pq}}}{2} \alpha +
\mathcal{O}_{p,q}(1 + \varepsilon \alpha). 
$$
We are interested in understanding $N_\beta(\alpha)$ for slopes $\beta$ close
to $p/q$.  The situation is clarified by considering the set of all lines of
slope $-p/q$ which intersect a lattice point. These lines intersect the $x$-axis
and $y$-axis periodically with period $1/p$ and $1/q$, respectively.  Moreover,
each line intersects one lattice point for every $q$ units traveled in the
$x$-direction. There is a rather clean and unifying picture (see Figure
\ref{fig:manylines}) with three main components. First, we draw a dashed line
representing the hypotenuse of the triangle associated with slope $-p/q$ and
area $\alpha^2/2$.  Second, we draw all lines of slope $-p/q$ that intersect a
lattice point, and label these lines by $\ell_j$ for $j \in \mathbb{Z}$.  Third,
we draw a bold line of slope $-\beta$ associated with a triangle of area
$\alpha^2/2$; this line represents the result of tilting the
dashed line a bit.
\begin{figure}[h!]
\begin{tikzpicture}[scale=.5]
\draw[->] (0,0) -- (0,13);
\draw[->] (0,0) -- (13,0);
\draw [thick]  (12,0) -- (0,12);
\draw [thick]  (11,0) -- (0,11);
\draw [thick]  (10,0) -- (0,10);
\draw [thick]  (0,0) -- (0,10);
\draw [thick]  (9,0) -- (0,9);
\draw [dashed]  (8.8,0) -- (0,8.8);
\draw [thick]  (8,0) -- (0,8);
\draw [thick]  (7,0) -- (0,7);
\draw [thick]  (6,0) -- (0,6);
\draw [ultra thick]  (7,0) -- (0,11);
\node at (-0.6, 6) {$\ell_{-3}$};
\node at (-0.6, 7) {$\ell_{-2}$};
\node at (-0.6, 8) {$\ell_{-1}$};
\node at (-0.4, 9) {$\ell_0$};
\node at (-0.4, 10) {$\ell_1$};
\node at (-0.4, 11) {$\ell_2$};
\node at (-0.4, 12) {$\ell_3$};
\end{tikzpicture}
\caption{The hypotenuse of the rational triangle (dashed), all parallel lines
of the same slope that intersect a lattice point (solid), and the hypotenuse
of a triangle with a nearby slope (bold).} \label{fig:manylines} 
\end{figure}
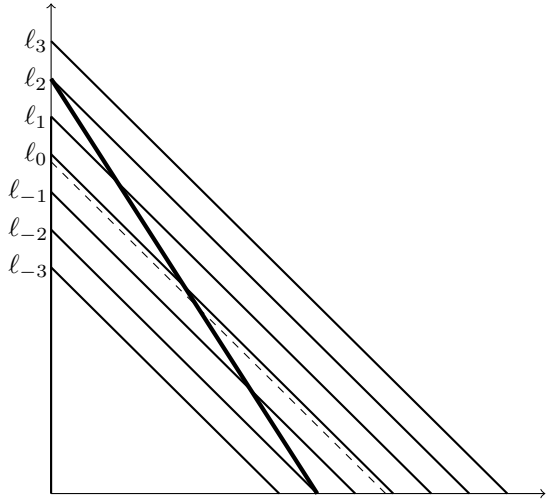

We make two remarks about Figure \ref{fig:manylines}. First, observe that the
dashed line occurs directly below a line of slope $-p/q$ which intersects a
lattice point; this corresponds to the fact that $\{\alpha \sqrt{pq} \} \in
(1-\varepsilon,1)$ since the hypotenuse will intersect a lattice point when
$\alpha \sqrt{p q}$ is an integer. Second, note that each line of slope $-p/q$
which intersects a lattice point, intersects a lattice point periodically
(intersecting one lattice point for every $q$ units on the $x$-axis); thus, up
to errors of order $\mathcal{O}(1)$, the number of lattice points captured from
such a line is proportional to the length of the captured line segment. In the
context of Figure \ref{fig:manylines} let us consider the result of tilting the
dashed line of slope $-p/q$ in terms of the number of captured lattice points.
Relatively quickly (and depending on $\varepsilon$), we capture half the
lattice points on $\ell_0$ without losing any lattice points. However, this is
basically the best that we can do: afterwards, when we start gaining lattice
points on $\ell_1$, we simultaneously lose about the same number of lattice
points on $\ell_{-1}$ and, more generally, lose about the same number on
$\ell_{-i}$ as we gain on $\ell_{i}$.  It remains to make this notion precise.
We first consider the lattice points gained from the line $\ell_0$. The
total number of lattice points on the line $\ell_0$ is $\alpha /\sqrt{pq}$ and
we add at most half of them. Observe that 
$$
N_{p/q}(\alpha) + \frac{1}{2} \alpha \sqrt{\frac{1}{p q}}  \le
\frac{\alpha^2}{2} - \alpha \frac{ \sqrt{\frac{p}{q}} + \sqrt{\frac{q}{p}}}{2} +
 \mathcal{O}_{p,q}(1 + \varepsilon \alpha) \le \frac{\alpha^2}{2} - \alpha +
\mathcal{O}_{p,q}(1 + \varepsilon \alpha),
$$
and therefore adding half the points on $\ell_0$ does not violate the bound of
the Lemma. Second, we note that since $|\beta - p/q| < c/\alpha$ where $c > 0$
is a fixed constant, it follows that the line of slope $\beta$ can only
intersect $O_{p,q}(1)$ lines $\ell_j$. Thus, it suffices to show that the net
change in the number of lattice points resulting from the intersection of our
tilted line with the lines  $\ell_j$ and $\ell_{-j}$ is $\mathcal{O}_{p,q}(1+
\varepsilon \alpha)$. The equation of the line of slope $-\beta$, which
intersects the $y$-axis
at $\alpha \sqrt{\beta}$ is
$$
y = -\beta x + \alpha \sqrt{\beta}.
$$
The family of lines of slope $-p/q$ that intersect a lattice point are given by
the equation
$$
y = -\frac{p}{q} x + (\alpha + \zeta) \sqrt{\frac{p}{q}} + \frac{j}{q},
$$
where $j \in \mathbb{Z}$ and $0 < \zeta < \varepsilon/\sqrt{pq}$ since $\{
\alpha \sqrt{pq} \} \in (1-\varepsilon,1)$.  We will
refer to the line of parameter $j$ as $\ell_j$.  The $x$-coordinate $x_j$ of the
intersection of $\ell_j$ with the line of slope $-\beta$ is
$$
x_j = \frac{\alpha \sqrt{\beta} - (\alpha + \zeta) \sqrt{\frac{p}{q}} - \frac{j}{q}}{\beta - \frac{p}{q}}.
$$
We now add the number of lattice points gained from intersecting
the line $\ell_j$ and subtract those lost from intersecting $\ell_{-j}$ (see Figure
\ref{fig:manylines}).
The net change in lattice points on the lines $\ell_j$ and $\ell_{-j}$ is equal
to $1/q$ times 
$$
 x_j - \left( \sqrt{\frac{q}{p}} \alpha - \frac{j}{p} - x_{-j} \right)   +
\mathcal{O}_{p,q}(1) =  2 \alpha \frac{\sqrt{\beta} - \sqrt{\frac{p}{q}}}{\beta
- \frac{p}{q}}- \sqrt{\frac{q}{p}} \alpha  - 2 \frac{ \zeta
\sqrt{\frac{p}{q}}}{\beta - \frac{p}{q}}  + \frac{j}{p} + \mathcal{O}_{p,q}(1).
$$
Now we estimate the first and second term on the right hand side of the above
equation 
$$ 
2 \frac{\sqrt{\beta} - \sqrt{\frac{p}{q}}}{{\beta - \frac{p}{q}}} \alpha - \sqrt{\frac{q}{p}}
\alpha = \frac{2 \alpha}{\sqrt{\beta} + \sqrt{\frac{p}{q}}} - \sqrt{\frac{q}{p}}
\alpha \le \frac{2 \alpha}{2 \sqrt{\frac{p}{q}}} - \sqrt{\frac{q}{p}} \alpha +
\mathcal{O}_{p,q}(1) = \mathcal{O}_{p,q}(1).
$$
Since the line of slope $\beta$ intersects $\mathcal{O}_{p,q}(1)$ lines
$\ell_j$, the term $j/p$
is $\mathcal{O}_{p,q}(1)$. It remains to estimate the term $2 \zeta
\sqrt{p/q}/(\beta - p/q)$. The key observation is that if the line of slope
$\beta$ intersects the line $\ell_j$ (it suffices consider the line $\ell_1$)
then it must be tilted enough such that $|\beta-p/q|$ is not that small, in
particular, 
$$
\frac{1}{|\beta - p/q|} = \mathcal{O}_{p,q}(\alpha).
$$
Since $0 < \zeta < \varepsilon/\sqrt{pq}$ we conclude the term $2 \zeta
\sqrt{p/q}/(\beta - p/q)$ is $\mathcal{O}_{p,q}(\varepsilon \alpha)$. Thus, in
combination, we have 
$$
 x_j - \left( \sqrt{\frac{q}{p}} \alpha - \frac{j}{p} - x_{-j} \right)   +
\mathcal{O}_{p,q}(1) = \mathcal{O}_{p,q}(1+\varepsilon \alpha ).
$$
This establishes the first statement of the Lemma.  If $\{ \alpha \sqrt{p q} \}
= 0$ and  $0 < |\beta - p/q| < c/\alpha$, then we still capture half of the
points on the line $\ell_0$ so the first part of the proof is unchanged.
Furthermore, the analysis of the net change from the intersections with the
lines $\ell_j$ and $\ell_{-j}$ is stable as $\varepsilon \rightarrow 0$, so the
second statement of the Lemma follows from an identical argument to the first.
\end{proof}

\subsection{Aligning Radicals}
In this section we establish an alignment result for the fractional part of
integer multiples of radicals of square-free integers.  We say $n \in
\mathbb{N}$ is square-free provided $n$ can be expressed as the product of
distinct prime numbers. Furthermore, we say irrational numbers
$v_1,v_2,\ldots,v_m$ are linearly independent over the rationals if
$$
(v_1,v_2,\ldots,v_m) \cdot n \not = 0, \quad \forall n \in \mathbb{Z}^m
\setminus \{0\}.
$$
The key idea used to establish the alignment result in this section is the
following result of Besicovitch \cite{besicovitch} (several different proofs are
given by Boreico \cite{Boreico2008}).

\begin{theorem}[Besicovitch \cite{besicovitch}] \label{radical}
Suppose $n_1,n_2,\ldots,n_m$ are distinct square-free integers. Then
$$
\sqrt{n_1},\sqrt{n_2},\ldots,\sqrt{n_m},
$$
are linearly independent over the rationals.
\end{theorem}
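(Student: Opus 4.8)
The plan is to reduce the statement to a single degree computation for a multiquadratic number field and then read off the linear independence from the Galois action. First I would pass from the integers $n_i$ to the primes occurring in them: let $p_1,\dots,p_r$ be the distinct primes dividing $n_1 n_2 \cdots n_m$, so that each square-free $n_i$ equals $\prod_{j \in S_i} p_j$ for a subset $S_i \subseteq \{1,\dots,r\}$ (with $S_i = \emptyset$ and $\sqrt{n_i}=1$ in the degenerate case $n_i=1$), and the subsets $S_i$ are pairwise distinct because a square-free integer is determined by its set of prime divisors. Since each $\sqrt{n_i}$ is then one of the $2^r$ numbers $\sqrt{d}$ with $d$ a square-free divisor of $P = p_1 \cdots p_r$, and these $2^r$ numbers span $\mathbb{Q}(\sqrt{p_1},\dots,\sqrt{p_r})$ over $\mathbb{Q}$, it suffices to show that they are linearly independent over $\mathbb{Q}$; equivalently, that $[\mathbb{Q}(\sqrt{p_1},\dots,\sqrt{p_r}) : \mathbb{Q}] = 2^r$.

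The heart of the argument, and the step I expect to be the main obstacle, is this degree computation, which I would carry out by induction on $r$. Write $K_t = \mathbb{Q}(\sqrt{p_1},\dots,\sqrt{p_t})$. As a compositum of the quadratic fields $\mathbb{Q}(\sqrt{p_j})$, each $K_t$ is Galois over $\mathbb{Q}$, and its Galois group embeds into $(\mathbb{Z}/2\mathbb{Z})^t$ by recording the sign in $\sigma(\sqrt{p_j}) = \pm\sqrt{p_j}$; together with the inductive hypothesis $[K_{r-1}:\mathbb{Q}] = 2^{r-1}$, this forces $\mathrm{Gal}(K_{r-1}/\mathbb{Q}) \cong (\mathbb{Z}/2\mathbb{Z})^{r-1}$. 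The inductive step then reduces to showing $\sqrt{p_r} \notin K_{r-1}$. If it were in $K_{r-1}$, then $\mathbb{Q}(\sqrt{p_r})$ would be a quadratic subfield of $K_{r-1}$; but the quadratic subfields correspond to the index-two subgroups of $(\mathbb{Z}/2\mathbb{Z})^{r-1}$ and are precisely the fields $\mathbb{Q}(\sqrt{d})$ with $d>1$ a square-free divisor of $p_1 \cdots p_{r-1}$. Since $\mathbb{Q}(\sqrt{a}) = \mathbb{Q}(\sqrt{b})$ exactly when $ab$ is a perfect square, and $p_r\,d$ is never a perfect square (the prime $p_r$ divides it to the first power only), no such coincidence can occur. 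Hence $[K_r:K_{r-1}] = 2$, and the induction yields $[K_r:\mathbb{Q}] = 2^r$.

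With the degree settled, $G := \mathrm{Gal}(K_r/\mathbb{Q}) \cong (\mathbb{Z}/2\mathbb{Z})^r$, and for each subset $S$ the element $\sqrt{d_S} := \prod_{j\in S}\sqrt{p_j}$ is an eigenvector of the $G$-action: writing $\sigma(\sqrt{p_j}) = \varepsilon_j(\sigma)\sqrt{p_j}$ one has $\sigma(\sqrt{d_S}) = \chi_S(\sigma)\,\sqrt{d_S}$, where $\chi_S(\sigma) = \prod_{j\in S}\varepsilon_j(\sigma)$ is a character $G \to \{\pm 1\}$, and distinct subsets $S$ give distinct characters $\chi_S$. Now given any relation $\sum_i c_i \sqrt{n_i} = 0$ with $c_i \in \mathbb{Q}$, each $\sqrt{n_i}$ is a nonzero eigenvector for the character $\chi_{S_i}$, and the $\chi_{S_i}$ are pairwise distinct; since nonzero vectors lying in eigenspaces for distinct characters are automatically linearly independent (equivalently, by Dedekind's lemma on the independence of distinct characters), every $c_i$ must vanish. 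This is exactly the asserted linear independence of $\sqrt{n_1},\dots,\sqrt{n_m}$ over $\mathbb{Q}$. A more elementary but more computational alternative to the whole scheme is Besicovitch's original induction, in which one takes a minimal nontrivial relation, squares off a single radical, and uses minimality to produce a shorter relation for a contradiction; I would fall back on this only if a Galois-free argument were preferred.
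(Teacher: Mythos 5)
Your argument is correct. Note, however, that the paper does not prove this statement at all: it is imported verbatim as a theorem of Besicovitch, with a citation to Besicovitch's 1940 article and to Boreico's survey of several proofs, so there is no internal proof to compare against. Your Galois-theoretic route (reduce to the $2^r$ square-free divisors of $p_1\cdots p_r$, prove $[\mathbb{Q}(\sqrt{p_1},\dots,\sqrt{p_r}):\mathbb{Q}]=2^r$ by induction, then either count dimensions or invoke independence of the distinct characters $\chi_S$) is the standard modern proof and is complete; Besicovitch's original argument, which you mention as a fallback, is the elementary minimal-counterexample induction and avoids Galois theory at the cost of more computation. The one step you should spell out is the identification of \emph{all} quadratic subfields of $K_{r-1}$ with the fields $\mathbb{Q}(\sqrt{d})$: you need that the $2^{r-1}-1$ fields $\mathbb{Q}(\sqrt{d})$, $d>1$ a square-free divisor of $p_1\cdots p_{r-1}$, are pairwise distinct (which your perfect-square criterion gives) \emph{and} that their number matches the number $2^{r-1}-1$ of index-two subgroups of $(\mathbb{Z}/2\mathbb{Z})^{r-1}$, so that they exhaust the quadratic subfields; with that count made explicit the induction closes and the proof stands. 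For the purposes of this paper only the statement is needed, so citing Besicovitch as the authors do is equally legitimate.
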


As usual, given a vector $v = (v_1,v_2,\ldots,v_m) \in \mathbb{R}^m$, define the vector of
fractional parts $\{v\} \in \mathbb{R}^m$ by
$$
\{ v \} = ( v_1 - \lfloor  v_1 \rfloor,  v_2 - \lfloor
 v_2 \rfloor, \ldots,  v_m - \lfloor  v_m \rfloor ).
$$
This Theorem can be combined with Kronecker's
Theorem \cite{Kronecker1884}: it says that if $v \in \mathbb{R}^m$ is a vector whose entries
combined with 1 are linearly independent over $\mathbb{Q}$, then
$$  \left\{k v \right\}_{k \in \mathbb{N}} \quad \mbox{is uniformly distributed in}~[0,1]^m.$$
We will not require uniform distribution, we shall only use that uniformly distributed sequences are dense.
Our main ingredient is the following.

\begin{lemma} \label{align}
Suppose $n_1,n_2,\ldots,n_{m-1} \in \mathbb{N}$ are distinct square-free numbers bigger than 1 and
assume that $n_m$ is a prime number that does not divide any of the $n_1, \dots, n_{m-1}$. 
Then there exists infinitely many numbers of the form $\alpha = k \sqrt{n_m}$ for $k \in \mathbb{N}$ such that $$\{
\alpha (\sqrt{n_1}, \dots, \sqrt{n_{m-1}}) \} \in (1-\varepsilon,1)^{m-1}.$$ 
\end{lemma}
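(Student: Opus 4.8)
The plan is to recognize the quantities $\alpha\sqrt{n_i}$ as integer multiples of square roots of square-free integers and then to extract the claim from the density of a suitable Kronecker orbit. Writing $\alpha = k\sqrt{n_m}$ with $k \in \mathbb{N}$, and using $\alpha\sqrt{n_i} = k\sqrt{n_m n_i}$, the desired condition becomes a statement about the vector of fractional parts of $k\,(\sqrt{n_m n_1},\dots,\sqrt{n_m n_{m-1}})$. Thus it suffices to show that the orbit $\{k v\}_{k\in\mathbb{N}}$ of the vector $v = (\sqrt{n_m n_1},\dots,\sqrt{n_m n_{m-1}})$ is dense in $[0,1]^{m-1}$; density at once produces infinitely many $k$ (hence infinitely many $\alpha = k\sqrt{n_m}$) landing in the nonempty open box $(1-\varepsilon,1)^{m-1}$.

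To obtain density I would invoke the form of Kronecker's theorem recalled above, whose hypothesis is that the entries of $v$ together with $1$ are linearly independent over $\mathbb{Q}$. The key arithmetic step is therefore to verify that $1,\sqrt{n_m n_1},\dots,\sqrt{n_m n_{m-1}}$ are $\mathbb{Q}$-linearly independent, and this is exactly where the hypotheses on $n_m$ enter. Because $n_m$ is prime and divides none of the square-free integers $n_i$, each product $n_m n_i$ has a prime factorization consisting of the (distinct) primes of $n_i$ together with the extra prime $n_m$, so $n_m n_i$ is again square-free; and since the $n_i$ are distinct and exceed $1$, the integers $1, n_m n_1,\dots,n_m n_{m-1}$ are $m$ pairwise distinct square-free integers, with $n_m n_i \ge n_m > 1$ ruling out any collision with the adjoined value $1$. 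Applying Besicovitch's theorem (Theorem \ref{radical}) to this list, with $1 = \sqrt{1}$ read as the square root of the square-free integer $1$, yields precisely the linear independence of $1,\sqrt{n_m n_1},\dots,\sqrt{n_m n_{m-1}}$ over $\mathbb{Q}$.

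With linear independence established, Kronecker's theorem shows that $\{k v\}_{k\in\mathbb{N}}$ is uniformly distributed, and in particular dense, in $[0,1]^{m-1}$, from which the conclusion follows. I expect the only genuinely delicate point to be the bookkeeping required to feed the constant $1$ into Besicovitch's theorem alongside the $\sqrt{n_m n_i}$: one must confirm both that square-freeness of the products survives multiplication by $n_m$ and that adjoining $1$ introduces no repetition, and each of these is guaranteed exactly by the primality of $n_m$ together with its non-divisibility of the $n_i$. Everything else — the passage from uniform distribution to density, and from density of the orbit to infinitely many admissible $k$ — is routine and uses the hypotheses only through this independence.
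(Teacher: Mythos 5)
Your proof is correct and follows essentially the same route as the paper's: reduce to the orbit of $k(\sqrt{n_m n_1},\dots,\sqrt{n_m n_{m-1}})$ on the torus, verify via the primality and non-divisibility hypotheses that $1, n_m n_1, \dots, n_m n_{m-1}$ are distinct square-free integers so that Besicovitch's theorem gives the $\mathbb{Q}$-linear independence needed for Kronecker's theorem, and conclude by density. The only (harmless) difference is that the paper also carries $\sqrt{n_m}$ as an extra coordinate and works on $[0,1]^m$, whereas you drop that redundant coordinate and work directly on $[0,1]^{m-1}$.
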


\begin{proof} Since $n_m$ is a prime that does not divide any of the other numbers, the list
$$ 1, ~n_m, ~n_1 n_m,~ n_2 n_m, ~\dots, ~n_{m-1} n_m$$
is a list of distinct square-free numbers. By the Theorem of Besicovitch
\cite{besicovitch}, their roots are linearly independent and thus, by
Kronecker's theorem, the sequence $$ k \left(\sqrt{n_m}, \sqrt{n_m n_1},
\sqrt{n_m n_2}, \dots, \sqrt{n_m n_{m-1}} \right) \qquad \mbox{is uniformly
distributed.}$$
As a byproduct, the sequence is dense and there exists a subsequence that is contained in $[0,1] \times (1-\varepsilon, 1)^{m-1}$
from which the result then follows since
$$ k \left(\sqrt{n_m}, \sqrt{n_m n_1}, \sqrt{n_m n_2}, \dots, \sqrt{n_m n_{m-1}} \right)  = k \sqrt{n_m} \left(1, \sqrt{n_1}, \sqrt{n_2}, \dots, \sqrt{n_{m-1}} \right).$$
\end{proof}

\section{Proof of Theorem \ref{limitset}: there is an infinite subset of $\Lambda$ in $S$}
\label{sec:proofthm23}

This section is devoted to establishing Theorem \ref{limitset}: the limit set
$$ 
S =  \bigcap_{r >0 } \bigcup_{\alpha > r}{ \argmax_{\beta >0} N_\beta(\alpha)}
$$ 
contains infinitely many elements from $\Lambda$. 

\subsection{Warming up.}
The proof follows essentially
from using all the Lemmas in the right order, however, since this is rather lengthy
we start by giving a much simpler statement first. It has the advantage of being
quite transparent and demonstrating the outline of the argument.

\begin{proposition} We have
$3/2 \in \Lambda \cap S$.
\end{proposition}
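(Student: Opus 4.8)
The membership $3/2 \in \Lambda$ is immediate from Lemma \ref{lem:rational} with $p=3$, $q=2$: since
$$
N_{3/2}(\alpha) = \frac{\alpha^2}{2} - \frac{\sqrt{3/2}+\sqrt{2/3}-\sqrt{1/6}\,(1-2\{\alpha\sqrt 6\})}{2}\,\alpha + \mathcal{O}(1),
$$
and $\{\alpha\sqrt 6\}$ equidistributes (as $\sqrt 6$ is irrational), the $\limsup$ of $(N_{3/2}(\alpha)-\alpha^2/2)/\alpha$ is attained along $\{\alpha\sqrt 6\}\to 0$ and equals $-(\sqrt{3/2}+\sqrt{2/3}-\sqrt{1/6})/2 = -\sqrt 6/3 > -1$. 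So the entire content is to show $3/2 \in S$, i.e.\ to produce arbitrarily large areas $\alpha$ at which $3/2$ globally maximizes $N_\beta(\alpha)$.

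My plan is to work along the peak areas $\alpha = k\sqrt 6$, $k\in\mathbb{N}$, where $\{\alpha\sqrt 6\}=\{6k\}=0$ and hence $N_{3/2}(\alpha)=\alpha^2/2-(\sqrt 6/3)\alpha+\mathcal{O}(1)$; in particular $3/2$ is $\gamma_0$-good there for the fixed $\gamma_0 = 2(1-\sqrt 6/3) > 0$. The first reduction is that any $\beta$ with $N_\beta(\alpha)\ge N_{3/2}(\alpha)$ is itself $\gamma_0$-good up to $o(1)$, so for large $\alpha$ Lemma \ref{curve} forces $1/4\le\beta\le 4$ and Lemma \ref{closetorational} (with the reflection $\beta\mapsto 1/\beta$) puts $\beta$ within $\mathcal{O}_{\gamma_0}(1/\alpha)$ of one of the \emph{finitely many} lowest-terms rationals $p/q\in[1/4,4]$ of bounded denominator. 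Call this finite set $R$; it contains $3/2$ and $2/3$. It then suffices to defeat, at a suitable peak, every competitor near an element of $R$, and crucially $R$, together with the threshold $\varepsilon$ below, is fixed \emph{before} $\alpha$ is chosen.

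The elements of $R$ split according to the square-free part of $pq$. If this part is $\ne 6$, I will arrange $\{\alpha\sqrt{pq}\}\in(1-\varepsilon,1)$, so that the first part of Lemma \ref{mimic} gives $N_\beta(\alpha)\le\alpha^2/2-\alpha+\mathcal{O}(1+\varepsilon\alpha)$ for every $\beta$ within $\mathcal{O}(1/\alpha)$ of $p/q$, which is strictly below $N_{3/2}(\alpha)=\alpha^2/2-(\sqrt 6/3)\alpha+\mathcal{O}(1)$ once $\varepsilon$ is small and $\alpha$ large. To realize all these trough conditions simultaneously while keeping $\alpha=k\sqrt 6$, write $\alpha\sqrt{pq}=k\sqrt{6pq}=kf\sqrt e$ with $6pq=ef^2$, $e$ square-free; the hypothesis forces $e\ge 2$, so by Besicovitch's Theorem \ref{radical} the finitely many such $\sqrt e$ together with $1$ are linearly independent over $\mathbb{Q}$, and Kronecker's theorem produces infinitely many $k$ with every $\{k\sqrt e\}$ arbitrarily close to $1$. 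As $f$ is bounded on $R$, this forces each $\{kf\sqrt e\}\in(1-f\varepsilon,1)$, as needed. Competitors near $3/2$ itself (and near $2/3$) are handled separately: because $\{\alpha\sqrt 6\}=0$ exactly, the second part of Lemma \ref{mimic} gives $N_\beta(\alpha)\le\alpha^2/2-\alpha+\mathcal{O}(1)$ for $0<|\beta-3/2|<c/\alpha$, while $N_{2/3}=N_{3/2}$ only ties.

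The genuine obstacle is the remaining case: $p/q\notin\{3/2,2/3\}$ whose $pq$ has square-free part $6$ (e.g.\ $8/3$, $3/8$, $25/24$). These share the radical $\sqrt 6$ with $3/2$, so at $\alpha=k\sqrt 6$ they are \emph{forced to peak simultaneously} and cannot be detuned; I must instead beat them at their own peaks. Writing $pq=6s^2$, the peak coefficient of such a slope is $(p+q-1)/(2s\sqrt 6)$, which exceeds the value $\sqrt 6/3 = 2/\sqrt 6$ of $3/2$ precisely when $p+q>4s+1$; and since $pq=6s^2$ is never a perfect square, $p\ne q$ gives $p+q\ge 2\sqrt{pq}=2\sqrt 6\,s>4s+1$ for all $s\ge 2$ (the case $s=1$ being only $3/2,2/3$ and the out-of-range $6,1/6$). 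Thus every such competitor is strictly worse at its peak, uniformly over the finite $R$, and its nearby slopes are dispatched again by the second part of Lemma \ref{mimic}. Combining the three cases, at the infinitely many areas $\alpha=k\sqrt 6$ no slope beats $3/2$, so $3/2\in S$ and hence $3/2\in\Lambda\cap S$. The step demanding the most care is keeping the reduction to $R$ and the choice of $\varepsilon$ uniform and prior to selecting $\alpha$, so that the alignment via Besicovitch and Kronecker is not circular.
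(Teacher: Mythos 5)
Your proof is correct and follows essentially the same route as the paper: work at the peak areas $\alpha = k\sqrt{6}$, reduce via Lemmas \ref{curve} and \ref{closetorational} to a finite set of rational slopes with small denominator, detune all of them simultaneously using Besicovitch's theorem together with Kronecker's theorem (Lemma \ref{align}), and finish with the two parts of Lemma \ref{mimic}. The only addition is your third case (competitors $p/q$ whose $pq$ has square-free part $6$), which the paper's explicit list $G=\{1,2,3,4,5,3/2,5/2,7/2,9/2\}$ renders vacuous since only $3/2$ there has $pq$ with square-free part $6$; your AM--GM argument $p+q>2\sqrt{6}\,s>4s+1$ for $s\ge 2$ is nevertheless correct and makes the conclusion independent of the exact denominator bound coming from Lemma \ref{closetorational}.
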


\begin{proof} Recall that, for any $\gamma > 0$, we say
that a slope $\beta > 0$ is a $\gamma-$good at area $\alpha > 0$ if 
$$ 
N_\beta(\alpha) > \frac{\alpha^2}{2} - \alpha + \frac{\gamma}{2} \alpha.  
$$

We start by remarking that $3/2$ is $\gamma-$good at areas $\alpha = k \sqrt{6}$
for $k \in \mathbb{N}$ for
$$
\gamma = 0.36 < 2 - \sqrt{\frac{3}{2}} - \sqrt{\frac{2}{3}} +
\sqrt{\frac{1}{6}} .
$$
Without loss of generality (by symmetry) we may restrict our consideration to
slopes $\beta \ge 1$.  We fix this value of $\gamma$ and use Lemma
\ref{closetorational} with $\eta = 1/3$ to conclude
$$ 
\beta~\mbox{is}~\gamma-\mbox{good} \implies \exists \frac{p}{q}\in \mathbb{Q}
\cap \left[1, 5\right] \quad \text{such that} \quad \left|\beta -
\frac{p}{q} \right| \lesssim  \frac{1}{\alpha} 
\quad \text{and} \quad q < 3.
$$
 Then there is a finite list of slopes whose denominator is
less than 3 and this list is given by
$$
G = \left\{ 1,2,3,4,5,\frac{3}{2},\frac{5}{2},\frac{7}{2},\frac{9}{2}\right\}.
$$
Put differently, for area $\alpha = k \sqrt{6}$ with $k \in \mathbb{N}$, the
only slopes that could potentially be better than $3/2$ are close to an element
in $G$:
$$
\beta~\mbox{is}~\gamma-\mbox{good} \implies \exists \frac{p}{q}\in G \quad \text{such that} \quad \left|\beta -
\frac{p}{q} \right| \lesssim  \frac{1}{\alpha}.
$$
We will now construct areas of the form $\alpha = k\sqrt{6}$ such that most slopes in $G$ perform
pretty badly and will then use the stability statement of Lemma \ref{mimic} to conclude that nearby slopes
are not performing particularly well either.
By Lemma \ref{lem:rational} the behavior of a rational slope $p/q$ is determined
by $\{ \alpha \sqrt{p q} \}$ for large areas $\alpha$. Therefore, we consider the
set
$$
A = \{ \sqrt{p  q} : \frac{p}{q} \in G \wedge \text{$p,q$ coprime} \}.
$$
Explicitly,
$$
A = \left\{ 1,\sqrt{2},\sqrt{3},2,\sqrt{5},\sqrt{6},\sqrt{10}, \sqrt{14},3
\sqrt{2} \right\}.
$$
We will now extract the set $A_1$ of square-free radicals that
appear in $A$
$$ A_1 =  \{\sqrt{2},\sqrt{3},\sqrt{5},\sqrt{6},\sqrt{10},\sqrt{14} \}.$$
Our goal is to align everything around real numbers of the form $\alpha = \sqrt{6}k$
for $k \in \mathbb{N}$ which leaves us with $A_2 = A_1\setminus \left\{ \sqrt{6} \right\}$
$$ A_2 =  \{\sqrt{2},\sqrt{3},\sqrt{5},\sqrt{10},\sqrt{14} \}.$$
 Lemma \ref{align} implies that for every arbitrarily small $\varepsilon_1 >0$ there exist infinitely many $\alpha = k
\sqrt{6}$ for $k \in \mathbb{N}$ such that 
$$
\left\{ \alpha \left(1, \sqrt{2}, \sqrt{3},\sqrt{5},\sqrt{10},\sqrt{14}, \right)  \right\}
\in \left(1-\varepsilon,1 \right)^6.  
$$ 
Thus, adding the elements $2$ and $3 \sqrt{2}$ back into the list
$$
 \left\{ \alpha (1,\sqrt{2},\sqrt{3},2,\sqrt{5},\sqrt{10}, \sqrt{14},3
\sqrt{2}) \right\} \in \left(1 - 3 \varepsilon,1 \right)^8.
$$
As customary, we can start the argument with $\varepsilon/3$ and absorb the constant. This means that 
there are infinitely many and arbitrarily large areas $\alpha$ such that for all $p/q \in G \setminus \{3/2\}$
$$
\{ \alpha \sqrt{p q} \} \in (1-\varepsilon,1),
$$
while 
$$
\{ \alpha \sqrt{6 } \} = 0.
$$
Lemma \ref{mimic} then implies the result.
\end{proof}

\subsection{Proof of Theorem \ref{limitset}}
\begin{proof} We will now see how the argument sketched in a special case in the section above can be modified to work in general. We have already seen that $\left\{1 \right\} \subset \Lambda$. Suppose the
statement is false and 
$$ \Lambda \cap S \qquad \mbox{is a finite set} \quad \left\{ \frac{p_1}{q_1}, \dots, \frac{p_n}{q_n}\right\}.$$
Let us then consider the slope $\beta = (p_r+1)/p_r$ where $p_r$ is a
prime number larger any of the $p_i$ or $q_i$. By Lemma
\ref{lem:rational},
the slope $\beta$ is going to be $\gamma$-good at areas $\alpha = k\sqrt{p_r(p_r+1)}$ where $k \in \mathbb{N}$, where $\gamma$ is some fixed
number depending only on $p_r$. The set of rational numbers that can ever possible be $\gamma$-good for infinitely many areas is finite and
we shall denote it by $G$. We now consider the set
$$ A = \left\{ \sqrt{pq}: \frac{p}{q} \in G \right\},$$
write every single element as $\sqrt{p q} = a \sqrt{b}$ with $a \in \mathbb{N}$ and $b \in \mathbb{N}$ and square-free (this decomposition
is unique) and compile, as above, $ A_1$ as the collection of all such $\sqrt{b}$.  Finally, we remove the element that arises from the square-free decomposition
of $\sqrt{p_r(p_r+1)}$. A difference to the proof above is that this element may correspond to more than one slope $p/q$.  Lemma \ref{align} allows, for 
$\varepsilon > 0$, to find infinitely many areas $\alpha = \sqrt{p_r(p_r+1)}k$, $k \in \mathbb{N}$, such that  
$$ \left\{ \alpha \sqrt{p_r(p_r+1)} \right\} = 0~\mbox{(by construction of}~\alpha\mbox{)}$$
 while for all $\sqrt{b} \in A_2$
$$ \left\{ \sqrt{b} \right\} \in \left(1-\varepsilon, 1\right).$$
Moreover, by Lemma \ref{mimic} and the fact that we are dealing with finitely
many rationals, we can pick a sufficiently small $\varepsilon > 0$ such
that for infinitely many $\alpha = \sqrt{p_r (p_r + 1)} k$
$$ \left\{ \sqrt{p q}
\alpha\right\}  \in \left(1-\varepsilon, 1\right),$$
with $\varepsilon > 0$ so small that $p/q$ cannot beat a $\gamma$-good slope --
however, this is only true for $p/q \in G$ whose squarefree-part does not
coincide with the square-free part of $\sqrt{p_r(p_r+1)}$. This square-free
part, however, is bound to contain at least $p_r$ because $p_r$ is prime and
does not divide $p_r + 1$. At the same time, by choice of $p_r$, no element in
the supposedly finite set $\Lambda \cap S$ can be affected. This means that we
have constructed a finite set of slopes, distinct from $\Lambda \cap S$, and an
infinite, unbounded sequence of areas such that the optimal slope is in that new
finite set. By pigeonholing, at least one of the elements has to be in $\Lambda
\cap S$ by applying Lemma \ref{mimic}.
\end{proof}

\subsection{Proof of Theorem \ref{badarea}}
\begin{proof} 
It suffices to show that for every $\gamma >0$ and a sequence of
$(\alpha_k)$ going to infinity $$ \sup_{\beta} N_{\beta}(\alpha_k) \leq
\frac{\alpha_k^2}{2} -\alpha_k +  \frac{\gamma}{2} \alpha_k.$$
In the language of
Lemma \ref{curve}, this means that we are asking for areas such that no
$\gamma-$good slope exists.  Lemma \ref{closetorational} implies that a
$\gamma-$good slope $\beta$ has to be rather close to rational slope $|
\frac{p}{q} - \beta | \lesssim \alpha^{-1}$ satisfying $q \lesssim 10/\gamma$ (in particular, the
set of rational numbers with this property is finite). We know from Lemma \ref{mimic} that
slopes near badly performing rational slopes cannot perform well. However, since there
are only finitely many rational slopes, we can find alignments where not a single one performs
well. These alignments correspond to areas where the optimal slope has to be different from
a number close to one of these few selected $\gamma-$good rational numbers. \end{proof}

\textbf{Acknowledgment.} We are grateful to Rick Laugesen and Shiya Liu for helpful discussions
and to an anonymous referee for many helpful suggestions for improvement and pointing
out the reference to the work of Besicovitch \cite{besicovitch}.

\end{document}